\numberwithin{equation}{section}
\newtheorem{theorem}{Theorem}[section]
\newtheorem{lemma}[theorem]{Lemma}
\newtheorem{thm}[theorem]{Theorem}
\newtheorem{defn}[theorem]{Definition}
\newtheorem{rmk}[theorem]{Remark}
\newcommand{\Rmnum}[1]{\expandafter\@slowromancap\romannumeral #1@}
\begin{document}
\title{Large-scale boundary estimates of parabolic homogenization over rough boundaries}
\author[a]{Pengxiu Yu\thanks{Email: Pxyu@ruc.edu.cn}}
\author[b,c]{Yiping Zhang\thanks{Corresponding author, Email: zhangyiping161@mails.ucas.ac.cn}}
\affil[a]{\footnotesize{School of Mathematics,
Renmin University of China, Beijing 100872, China}}
\affil[b]{\footnotesize{School of Mathematics and Statistics, and Hubei Key Lab--Math. Sci., Central China Normal University, Wuhan 430079, China}}
\affil[c]{\footnotesize{Key Laboratory of Nonlinear Analysis \& Applications (Ministry of Education), Central China Normal University, Wuhan 430079, China}}


\date{}
\maketitle
\begin{abstract}
In this paper, for a family of second-order parabolic system or equation with rapidly oscillating and time-dependent periodic coefficients over rough boundaries, we obtain the large-scale boundary estimates, by a quantitative approach.
The quantitative approach relies on approximating twice: we first approximate the original parabolic problem over rough boundary by the same equation over a non-oscillating boundary and then approximate the oscillating equation over a non-oscillating boundary by its homogenized equation over the same non-oscillating boundary.
\end{abstract}

\section{Introduction}\label{i1}
\let\thefootnote\relax\footnotetext{ Mathematics Subject Classification: 35B27, 35K15.\\
Keywords: Homogenization, Parabolic equations, rough domain, large-scale boundary regularity.}
\subsection{Motivation}\label{m1.1}
In this paper, we want to investigate the boundary regularity for parabolic system/equation of divergence form in a bounded domain in $\mathbb{R}^{d+1}$ whose space-boundary is arbitrarily at small scales. More precisely, let $\Omega^\varepsilon$ be a bounded domain in $\mathbb{R}^d$ and $0\in \partial\Omega^\varepsilon$.
Denote $Q_r=:B_r\times (-r^2,0)$, $\Omega^\varepsilon_r=:\Omega^\varepsilon \cap B_r$,
$Q_r^\varepsilon=:\Omega^\varepsilon_r\times (-r^2,0)$ and
$\Delta_r^\varepsilon=:(\partial\Omega^\varepsilon \cap B_r)\times [-r^2,0]$. Now, for
$u_\varepsilon\in L^2(-4,0; H^1(\Omega^\varepsilon_{2}))$ with $\partial_t u_\varepsilon\in
L^2(-4,0;H^{-1}(\Omega^\varepsilon_{2}))$, we consider the following linear parabolic system/equation:
\begin{equation} \label{1.1}
\left\{\begin{aligned}
\partial_t u_\varepsilon - \operatorname{div}\left(A^\varepsilon \nabla u_\varepsilon \right) &= 0 \quad \text{ in } Q_{2}^\varepsilon,\\
u_\varepsilon&=0 \quad \text{ on } \Delta_{2}^\varepsilon,\\
\end{aligned}\right.
\end{equation}
where $A^\varepsilon(x,t)=:A(x/\varepsilon,t/\varepsilon^2)$ and the definition of $\Omega^\varepsilon$ with rapidly oscillating boundaries will be given in Definition \ref{d1.1}.

For the uniform regularity estimates in parabolic periodic homogenization, Shen and Gen \cite{MR3361284,MR4072212} obtained the
large-scale interior/boundary Lipschitz estimate in a series of papers. In particular, for the case of non-self-similar scales 
\cite{MR4072212} (i.e., $A^\varepsilon(x,t)=A(x/\varepsilon,t/\varepsilon^k), k\in (0,+\infty)$), by a quantitative approach, they obtained that if $A$ is periodic and uniform elliptic, and the non-oscillating domain $\Omega_2\times(-4,0)$ is $C^{1,\alpha}$, then for any $\varepsilon+\varepsilon^{k/2}\leq r<2$,
\begin{equation}\label{1.2}
\left(\fint_{\Omega_r\times(-r^2,0)}|\nabla u_\varepsilon|^2\right)^{1/2}\leq C \left(\fint_{\Omega_2\times(-4,0)}|\nabla 
u_\varepsilon|^2\right)^{1/2},
\end{equation}
for the constant $C$ independent of $r$ and $\varepsilon$. For $k=2$, by a simple blow-up argument, the large-scale boundary 
Lipschitz estimates \eqref{1.2} can by improved as the point-wise boundary Lipschitz estimates if $A$ is H\"older continuous. 
Moreover, we refer readers to \cite{MR3812862} for the quantitative estimates in stochastic homogenization of parabolic version.

The literature on uniform regularity estimates of the elliptic version is extensive. The seminal work was conducted in the late 1980s by Avellaneda and Lin, who published a series of papers on both divergence and non-divergence forms in periodic homogenization \cite{MR910954,MR978702}. They employed a compactness method for their research. For the almost-periodic or stochastic homogenization in the elliptic case of the divergence form, uniform regularity estimates can be found in various sources, including \cite{MR3519974,MR3437852,MR3541853,MR3399132,MR3744924}.
Specifically, in the context of the Neumann problem, Kenig, Lin, and Shen \cite{MR3073881} initially derived Lipschitz estimates for symmetric coefficients using the compactness method. Subsequently, Armstrong and Shen \cite{MR3541853} removed the symmetric assumption through a quantitative approach.
Recent investigations have focused on uniform regularity estimates in various equations within the realm of homogenization theory. These include the Stokes equations over John domains \cite{MR4619004}, multi-scale modelings \cite{MR4566686,MR4708661}, Darcy's laws in periodically perforated domains \cite{MR4438913}, and the degenerate elliptic case in perforated domains \cite{shen2023uniform} and so on.


In physical reality, it is natural to consider the PDEs (especially for the fluid dynamics) with rapidly oscillating boundaries. One of the main goals is to determine the effective boundary conditions (also called the wall laws) and obtain an higher-order convergence rate. We refer readers to \cite{MR1657773,MR1906814,MR2410410,MR3642228,MR2470924,MR4715064} for this topic and the 
reference therein for more details. Moreover, another main goal is to obtain  the large-scale regularity, which should be expected and observed in physical reality for the solutions to the PDEs with rapidly oscillating boundaries.
For the elliptic case of divergence form with rapidly oscillating boundaries given by $x_d=\varepsilon \psi(x'/\varepsilon)$, using the compactness method, Kenig and Prange obtained the large-scale
Lipchitz estimate under the assumption $\psi\in W^{1,\infty}(\mathbb{R}^{d-1})$ \cite{MR3784804} and $\psi\in C^{1,\alpha}(\mathbb{R}^{d-1})$ \cite{MR3325774}, respectively. Recently, by a quantitative method, Zhuge \cite{MR4199276} extended the above result to a more general domain, which satisfies the so-called $\varepsilon$-flatness with a
$\sigma$-admissible modulus. Moreover, for the large-scale regularity estimates of the Stokes systems and  stationary Navier-Stokes equation over bumpy boundaries, we refer to \cite{MR4419614,MR4123336,MR4702317} for the details.

In this paper, we want to extend the result obtained by Zhuge \cite{MR4199276} to the parabolic case. Firstly, we introduce the concept of $\varepsilon$-flatness with a $\sigma$-admissible modulus, cited from Zhuge \cite[Definitions 1.1-1.2]{MR4199276}.

\begin{defn}\label{d1.1}
Let $\Omega^\varepsilon$ be a bounded domain in $\mathbb{R}^d$ with $\varepsilon>0$. We say $\Omega^\varepsilon$ is $\varepsilon$-scale flat with a modulus $\zeta:(0,1]\times (0,1]\mapsto [0,1]$, if for any $y\in \partial \Omega^\varepsilon$ and $r\in (\varepsilon,1)$, there exists a unit (outward normal) vector $n_r = n_r(y) \in \mathbb{R}^d$ so that
	\begin{equation}\label{1.3}
	\begin{aligned}
	& B_r(y) \cap \{ x\in \mathbb{R}^d: (x-y)\cdot n_r < -r\zeta(r,\varepsilon/r) \} \\[5pt]
	& \qquad \subset \Omega^\varepsilon_r(y) \subset B_r(y) \cap \{ x\in \mathbb{R}^d: (x-y)\cdot n_r < r\zeta(r,\varepsilon/r) \}.
	\end{aligned}
	\end{equation}
\end{defn}

Moreover, the modulus above should satisfy an additional quantitative condition, which is stated as follows:

\begin{defn}\label{d1.2}
Let $\eta:(0,1]\times (0,1] \to (0,1]$ be a continuous function. We say that $\eta$ is an ``admissible modulus'' if the following conditions hold:
	\begin{itemize}
		\item Flatness condition:
		\begin{equation}\label{1.4}
		\lim_{t\to 0^+} \sup_{r,s\in (0,t)} \eta(r,s) = 0.
		\end{equation}

		\item A Dini-type condition:
		\begin{equation*}
		\lim_{t\to 0^+} \sup_{\varepsilon \in (0,t^2)} \int_{\varepsilon/t}^{t} \frac{\eta(r,\varepsilon/r)}{r} dr = 0.
		\end{equation*}
	\end{itemize}
	Moreover, we say $\eta$ is ``$\sigma$-admissible'' if $\eta^\sigma$ is an admissible modulus.
\end{defn}

As pointed out in \cite{MR4199276}, there are three typical cases of $\sigma$-admissible moduli in the following.\\

\noindent Case 1: $\zeta (r,s)=Cr^\alpha$ if $\Omega^\varepsilon$ is uniformly $C^{1,\alpha}$.\\

\noindent Case 2: $\zeta (r,s)=Cs$ if the boundary $\partial\Omega^\varepsilon$ is locally given by the graph of $x_d=\varepsilon \psi(x'/\varepsilon)$ with $\psi\in C^0 \cap L^\infty(\mathbb{R}^{d-1})$. Moreover, $\zeta (r,s)=Cs^{1-\alpha}$ if $\psi(x')$ is uniformly $C^\alpha$-H\"older continuous in $\mathbb{R}^{d-1}$ (not necessarily bounded).\\

\noindent Case 3: $\zeta (r,s)=Cr^\beta+s^\alpha$ if the oscillation boundary is given by a graph $x_d=\psi_0(x')+\varepsilon\psi_1(x'/\varepsilon)$, where $\psi_0$ is a $C^{1,\beta}$ function and $\psi_1$ satisfies either condition in Case 2.\\

Now, we assume that $\Omega^\varepsilon$ is $\varepsilon$-scale flat with a $\sigma$-admissible  modulus
$\varsigma$ for some $\sigma\in (0,1/2)$ and  we turn back to our parabolic problem \eqref{1.1}. Recall that for the elliptic
case over rough boundaries, one can directly extend $u_\varepsilon$ across the rough boundary by
zero-extension which would preserve the $H^1$-norm of $u_\varepsilon$. However, for the parabolic
case, something turns out to be different. The main difference is that we need to find a meaningful
extension of $\partial_t u_\varepsilon$ in the Sobolev space $L^2(-4,0;H^{-1}(\Omega^\varepsilon_2))$ with negative index, which seems  unavoidable such as in the basic energy
estimates and in the parabolic Caccioppoli inequalities \cite[Lemma B.6]{MR3812862}.

Precisely, note that $u_\varepsilon=0$ on $\Delta_2^\varepsilon$, then we can extend $u_\varepsilon$ naturally to $Q_2$ by

\begin{equation}\label{1.5}
\tilde{u}_\varepsilon(x,t)=\left\{\begin{aligned}
&u_\varepsilon(x,t),\quad  &\text{ if }(x,t)\in (\Omega^\varepsilon \cap B_2)\times[-4,0],\\
&0, &\text{ if }(x,t)\in (B_2\setminus\Omega^\varepsilon)\times[-4,0].
\end{aligned}\right.\end{equation}

 It is easy to verify that
$\tilde{u}_\varepsilon\in L^2(-4,0;H^1(B_2))$ with $||\nabla
\tilde{u}_\varepsilon||_{L^2(Q_2)}=||\nabla {u}_\varepsilon||_{L^2(Q_2^\varepsilon)}$ and
$||\tilde{u}_\varepsilon||_{L^2(Q_2)}=|| {u}_\varepsilon||_{L^2(Q_2^\varepsilon)}$. Moreover, we
know that $\nabla \tilde{u}_\varepsilon=0$ and $\partial_t \tilde{u}_\varepsilon=0$ for  $(x,t)\in
(B_2\setminus\Omega_2^\varepsilon)\times[-4,0]$. For simplicity, we will still denote the extended function
$\tilde{u}_\varepsilon$ by $u_\varepsilon$ if the content is understood.

Unfortunately, for the zero-extension in Sobolev space with negative exponent, the following control
$$||\partial_t \tilde{u}_\varepsilon||_{L^2(-4,0;H^{-1}(B_2))}\leq ||\partial_t \tilde{u}_\varepsilon||_{L^2(-4,0;H^{-1}(\Omega^\varepsilon_2))}$$
may not hold true for general function $u_\varepsilon$ and for general domain $\Omega^\varepsilon$.

To proceed, in order to cover a more general domain, we now introduce an abstract framework of hypotheses on this extension, which can be rigorously verified at least for the rough boundary given by a Lipschitz graph, see Lemma \ref{l2.2} for the details.

To be more precise, for the solution $u_\varepsilon$ to the parabolic probelm \eqref{1.1}, assume the following hypothesis hold true:\\

\noindent (H) For any Lipschitz bounded domain $\Omega\subset\mathbb{R}^d$ with $\Omega\cap \Omega^\varepsilon\neq \emptyset$ and any $4>s>0$, there holds
\begin{equation}\label{1.6}
||\partial_t \tilde{u}_\varepsilon||_{L^2(-s,0;H^{-1}(\Omega))}\leq C ||\nabla u_\varepsilon||_{L^2((\Omega^\varepsilon \cap \Omega)\times(-s,0))};
\end{equation}


\noindent where the constant $C$ in \eqref{1.6} depends only on $\Lambda,d$ and $m$. For the meanings of these constants, see Section \ref{m1.2} for the details.

\begin{rmk}
We have noticed the method used in \cite{MR4514952} to avoid the extension in Sobolev space with negative exponent, which may be helpful to remove the assumption (H) (however, the difficulty is that we consider the parabolic problem with the coefficient matrix depending on the time variable). Precisely, they \cite{MR4514952} consider the equation satisfied by $\tilde{U}_\varepsilon(x,t)=:\int_0^t \tilde{u}_\varepsilon(x,s)ds$ which has better regularity in time variable.
\end{rmk}

At the end of this subsection, we give an explanation of using Sobolev-Poinc\'{a}re's inequality over rough boundaries. For any $\varphi\in H^1(\Omega^\varepsilon_r)$ such that $\varphi=0$ on
$\partial\Omega^\varepsilon \cap B_r$ for $r\geq \varepsilon$, we extend it to $B_r$ by zero across the rough boundary
$\partial\Omega^\varepsilon \cap B_r$, then we can use the classical Sobolev-Poinc\'{a}re's inequality for $\varphi$ in the non-oscillating domains $B_r$.
\subsection{Main result}\label{m1.2}

We consider a family of oscillating parabolic operators with self-similar scales in divergence form
\begin{equation}\label{1.8}
\operatorname{div}(A(x/\varepsilon,t/\varepsilon^2)\nabla)=:\frac{\partial}{\partial x_i}\left\{a^{\alpha\beta}_{ij}\left(\frac x\varepsilon,\frac t{\varepsilon^2}\right)\frac{\partial}{\partial x_j}\right\},
\end{equation}
with $1\leq i,j\leq d$, $1\leq \alpha,\beta \leq m$ (the summation convention is used throughout the paper), where $d$ is the space dimension and $m$ is the number of equations. For our purpose, assume that the coefficient matrix $A=(a^{\alpha\beta}_{ij})$ satisfies the following conditions:\\

(i) Ellipticity Condition: For some $\Lambda>1$ and all $(y,s)\in \mathbb{R}^d\times \mathbb{R}$, $\xi=(\xi_i^\alpha)\in \mathbb{R}^{m\times d}$, there holds that
\begin{equation}\label{1.9}
\Lambda^{-1} |\xi|^2\leq a^{\alpha\beta}_{ij}(y,s)\xi_i^\alpha\cdot\xi_j^\beta\leq \Lambda|\xi|^2.
\end{equation}

(ii) Periodicity Condition:
\begin{equation}\label{1.10}
A(y+z,s+t)=A(y,s) \quad \text{for } (y,s)\in \mathbb{R}^d\times \mathbb{R} \text{ and }(z,t)\in\mathbb{Z}^d\times\mathbb{Z}.\end{equation}

Suppose $\left\{\Omega^\varepsilon:\varepsilon>0\right\}$ is a family of bounded domains with oscillating boundaries and $0\in \partial\Omega^\varepsilon$. Recall that we denote $I_r=:(-r^2,0)$, $Q_r=:B_r\times I_r$, $\Omega^\varepsilon_r=:\Omega^\varepsilon \cap B_r$,
$Q_r^\varepsilon=:\Omega^\varepsilon_r\times I_r$,
$\Delta_r^\varepsilon=:(\partial\Omega^\varepsilon \cap B_r)\times [-r^2,0]$ and $A^\varepsilon(x,t)=:A(x/\varepsilon,t/\varepsilon^2)$. Now, we define the weak solution of \eqref{1.1}.
$u_\varepsilon\in L^2(-4,0;H^1(\Omega^\varepsilon_2))$ with $\partial_t u_\varepsilon\in L^2(-4,0;H^{-1}(\Omega^\varepsilon_2))$  is a weak solution of \eqref{1.1} if for any $-4\leq s<0$ and for any $\varphi\in C^\infty(s,0;C^\infty_0(\Omega^\varepsilon_2))$, there holds

$$\int_{\Omega^\varepsilon_2\times\{0\}}u_\varepsilon\varphi
-\int_{\Omega^\varepsilon_2\times\{s\}}u_\varepsilon\varphi
-\int^0_s\int_{{\Omega^\varepsilon_2}}u_\varepsilon\partial_t \varphi
+\int^0_s\int_{{\Omega^\varepsilon_2}}A^\varepsilon\nabla u_\varepsilon\nabla \varphi=0,$$

\noindent
where  $u_\varepsilon\varphi\in L^2(s,0;H^1_0(\Omega^\varepsilon_2))$ for any $\varphi\in C^\infty(s,0;C^\infty_0(\Omega^\varepsilon_2))$.\\

Now the main result of this paper is stated as follows:

\begin{thm}\label{t1.4}
Let $\varepsilon\in (0,1)$ and assume that $\Omega^\varepsilon$ is $\varepsilon$-scale flat with a $\sigma$-admissible  modulus $\varsigma$ for some $\sigma\in (0,1/2)$. Let $u_\varepsilon$ be a weak solution of \eqref{1.1}, satisfying the hypothesis \eqref{1.6}. Then for any $r\in (\varepsilon,1)$, there holds the following large-scale boundary Lipschitz estimates
\begin{equation}\label{1.11}
\left(\fint_{Q^\varepsilon_r}|\nabla u_\varepsilon|^2\right)^{1/2}\leq C \left(\fint_{Q^\varepsilon_2}|\nabla u_\varepsilon|^2\right)^{1/2},
\end{equation}
where the constant $C$ depends only on $\Lambda, d, m$ and $\varsigma$.
\end{thm}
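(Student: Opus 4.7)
My plan is to follow the quantitative double-approximation scheme outlined in the abstract and feed it into a dyadic iteration on scales. Set
\[
\Phi(r) := \left( \fint_{Q^\varepsilon_r} |\nabla u_\varepsilon|^2 \right)^{1/2},
\]
and aim to establish the following one-step improvement: there exist $\theta \in (0,1/4)$ and $C$, depending only on $\Lambda, d, m$, such that for every $r$ with $\varepsilon \leq r \leq 1/2$,
\[
\Phi(\theta r) \;\leq\; \tfrac{1}{2}\Phi(r) \;+\; C\bigl[\varsigma(r, \varepsilon/r)^\sigma + (\varepsilon/r)^\sigma\bigr]\,\Phi(2r).
\]
Once this estimate is in hand, iterating along the dyadic sequence $r_k = \theta^k$ down to $r_k \sim \varepsilon$ and invoking the $\sigma$-admissibility of $\varsigma$ from Definition \ref{d1.2} makes the total accumulated error summable (with sum depending only on $\varsigma$). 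A standard geometric iteration lemma in the spirit of \cite{MR4072212, MR3812862} then yields $\Phi(r) \leq C\Phi(2)$ uniformly for $r \in (\varepsilon, 1)$, which is exactly \eqref{1.11}.

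\textbf{First approximation: rough to flat.} Fix $r \in (\varepsilon, 1/2)$ and let $n_r$ be the unit vector from Definition \ref{d1.1} at $y=0$; set $D_r := B_r \cap \{x \cdot n_r < 0\}$ and $\widetilde{Q}_r := D_{3r/2} \times I_{3r/2}$. By $\varepsilon$-flatness, the symmetric difference $\Omega^\varepsilon_r \triangle D_r$ lies in the slab $\{|x \cdot n_r| \leq r\varsigma(r, \varepsilon/r)\}$, of $d$-dimensional volume $\lesssim r^d \varsigma(r,\varepsilon/r)$. On $\widetilde{Q}_r$ I would solve the same oscillating equation with lateral and initial data given by the zero-extension $\tilde{u}_\varepsilon$ from \eqref{1.5}, producing an auxiliary solution $v_\varepsilon$. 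The difference $w := \tilde{u}_\varepsilon - v_\varepsilon$ vanishes on the parabolic boundary of $\widetilde{Q}_r$ and, using hypothesis (H) to control $\partial_t \tilde u_\varepsilon$ in the dual Sobolev norm on the flat-boundary cylinder, satisfies a standard energy identity whose right-hand side is concentrated on the slab where $\nabla \tilde{u}_\varepsilon$ and $\nabla v_\varepsilon$ disagree. A Meyers-type self-improvement for $\tilde{u}_\varepsilon$ on $\widetilde{Q}_r$ gives $\nabla \tilde{u}_\varepsilon \in L^{2+\delta}$ with norm $\lesssim \Phi(2r)$, and H\"older's inequality against the parabolic slab of volume $\lesssim r^{d+2}\varsigma(r,\varepsilon/r)$ converts this into
\[
\left( \fint_{Q_r} |\nabla w|^2 \right)^{1/2} \;\leq\; C\,\varsigma(r, \varepsilon/r)^{\sigma}\,\Phi(2r),
\]
after passing to a smaller exponent via $\varsigma \leq 1$ if necessary.

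\textbf{Second approximation and main obstacle.} On the flat-boundary cylinder $\widetilde{Q}_r$ I compare $v_\varepsilon$ with the solution $v_0$ of the homogenized constant-coefficient parabolic system $\partial_t v_0 - \operatorname{div}(\widehat{A}\nabla v_0) = 0$ on $\widetilde{Q}_r$ with the same parabolic boundary data. The quantitative homogenization toolkit for the self-similar scale (correctors and dual/flux correctors, as developed in \cite{MR4072212}) yields
\[
\left( \fint_{D_r\times I_r} |\nabla(v_\varepsilon - v_0)|^2 \right)^{1/2} \;\leq\; C\,(\varepsilon/r)^{\sigma}\,\Phi(2r),
\]
and since $v_0$ solves a constant-coefficient parabolic system on a half-cylinder with smooth lateral boundary, it enjoys the $C^{1,\alpha}$ boundary decay $\bigl(\fint_{D_{\theta r}\times I_{\theta r}} |\nabla v_0|^2\bigr)^{1/2} \leq C\theta \bigl(\fint_{D_r\times I_r} |\nabla v_0|^2\bigr)^{1/2}$. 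A triangle inequality combining the two approximation errors with this decay, followed by choosing $\theta$ small enough that $C\theta \leq 1/4$, yields the one-step improvement stated above. I expect the first approximation to be the most delicate step: the elliptic analogue in Zhuge \cite{MR4199276} rests on the fact that zero-extension is automatically admissible in $H^1$, whereas in our parabolic setting $\partial_t u_\varepsilon$ only lives in a negative-index Sobolev space, which is exactly why hypothesis (H) is imposed. Carrying out the energy identity for $w$ rigorously under (H), while still producing a clean power of $\varsigma^\sigma$ via a boundary Meyers estimate compatible with the time-dependent coefficients, will require the bulk of the technical work.
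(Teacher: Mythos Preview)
Your proposal has a genuine gap in the ``second approximation'' step. You claim that the homogenized solution $v_0$, being $C^{1,\alpha}$ up to the flat boundary, satisfies
\[
\left(\fint_{D_{\theta r}\times I_{\theta r}}|\nabla v_0|^2\right)^{1/2}\le C\theta\left(\fint_{D_r\times I_r}|\nabla v_0|^2\right)^{1/2}.
\]
This is false: $v_0$ vanishes on the flat boundary but has nontrivial data elsewhere, so its normal derivative at the origin is generically nonzero, and $\bigl(\fint_{D_{\theta r}\times I_{\theta r}}|\nabla v_0|^2\bigr)^{1/2}\to |\nabla v_0(0,0)|\neq 0$ as $\theta\to 0$. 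The $C^{1,\alpha}$ estimate gives decay of the \emph{oscillation} of $\nabla v_0$, equivalently of the first-order excess
\[
\inf_{k\in\mathbb{R}^d}\frac{1}{\theta r}\left(\fint_{Q^\varepsilon_{\theta r}}|u-(n_r\cdot x)k|^2\right)^{1/2},
\]
not of $\Phi$ itself. Consequently your one-step inequality $\Phi(\theta r)\le\tfrac12\Phi(r)+\text{error}$ cannot hold, and the dyadic iteration you describe would force $\Phi(r)\to 0$ as $r\downarrow\varepsilon$ up to an admissible error, which is certainly wrong.

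The paper avoids this by iterating on the excess $H(r)=\frac{1}{r}\inf_k\bigl(\fint_{Q^\varepsilon_r}|u_\varepsilon-(n_r\cdot x)k|^2\bigr)^{1/2}$ rather than on $\Phi$. The $C^{1,1}$ regularity of $u_0$ near the flat boundary yields $\tilde H(r,\theta;w_\varepsilon)\le\tfrac12\tilde H(r,1;w_\varepsilon)+\text{error}$ (Lemma~\ref{l4.4}), which after comparing $\tilde H$ with $H$ gives $H(\theta r)\le\tfrac12 H(r)+C\bigl[(\varepsilon/r)^\sigma+\varsigma(r,\varepsilon/r)^{1/2}\bigr]\Phi(40r)$ (Lemma~\ref{l4.5}). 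Passing from control of $H$ to control of $\Phi$ is nontrivial and requires the auxiliary quantity $h(r)=|k_r|$ together with structural relations between $H$, $\Phi$, $h$ (Lemma~\ref{l4.2}) and a tailored iteration lemma (Lemma~\ref{l4.6}); this machinery, adapted from \cite{MR4199276}, also absorbs the fact that the outer normal $n_r$ varies with $r$ (Lemma~\ref{l4.1}). Two further technical points you should be aware of: (i) the Meyers-type improvement over the rough boundary is obtained not for $\nabla u_\varepsilon$ pointwise but for the smoothed quantity $\mathcal{M}^2_{r_1}[\nabla u_\varepsilon]$ (Lemma~\ref{l3.3}), since a genuine reverse H\"older inequality can fail below scale $\varepsilon^*$; (ii) the second approximation (Theorem~\ref{t4.3}) is stated as an $L^2$ bound on $u_\varepsilon-u_0$, not on its gradient.
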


By Definitions \ref{d1.1} and \ref{d1.2}, without loss of generality, we may assume $r\varsigma(r,\varepsilon/r)$ is nondecreasing and $\varsigma(r,\varepsilon/r)\leq \frac 1 4$ for $r\in(\varepsilon,1)$, then $\varepsilon\geq 4\varepsilon^*=\varepsilon\varsigma(\varepsilon,1)$.

Ignoring the hypothesis \eqref{1.6} involved the extension in Sobolev spaces with negative exponent, the above result is a generalization of parabolic version of the previous work by Zhuge \cite{MR4199276} in elliptic case, which sates that the large-scale (macroscopic) smoothness of the boundary implies the large-scale (macroscopic) smoothness of the solutions of PDEs with rough boundaries.\\

\subsection{Outline of the proof}
In this subsection, we briefly introduce the ideas and the structure of this paper.

In Section \ref{p2}, we introduce some preliminaries including the averaging operator which is used to improve the large-scale regularity of $\nabla u_\varepsilon$ and the rigorous proof of the hypothesis \eqref{1.6} with the boundary $\partial \Omega^\varepsilon$ given by a Lipschitz graph.

In Section \ref{l3}, following the ideas in \cite{MR3812862}, we introduce the parabolic Caccioppoli inequality over rough boundaries, which yields a large-scale Meyers estimate. Moreover, using the $\varepsilon$-flatness of $\Omega^\varepsilon$ with a $\sigma$-admissible  modulus $\varsigma$ and the large-scale Meyers estimate, we approximate the original parabolic problem \eqref{1.1} over rough boundary by the same equation over a non-oscillating boundary.

In Section \ref{l4}, we approximate the oscillating equation over a non-oscillating boundary investigated in Section 3 by its homogenized equation over the same non-oscillating boundary. Moreover, we can obtain the excess estimates in the above two steps. Finally, we complete the proof of the large-scale boundary Lipschitz estimates \eqref{1.11} by an iteration result and the parabolic Caccioppoli inequality.\\

We end this section with the following remark.

\begin{rmk}\label{r1.5}In this remark, we try to move forward with the coefficients $A$ with non-self-similar scales. In view of the non-self-similar scales considered in parabolic homogenization \cite{MR4072212}, the method used in this paper may be applied to the case of non-self-similar scales and a similar large-scale boundary Lipschitz regularity continues to hold. Actually, in order to obtain the excess estimates, we need to approximate the original oscillating problem over rough boundary twice. We first approximate the original oscillating problem over rough boundary by the same oscillating problem over flat boundary as in Section \ref{l3}, where we only use the ellipticity condition \eqref{1.9} to estimate the errors. Next, we approximate the original oscillating problem over flat boundary by the homogenized problem over flat boundary as in Section \ref{l4}, where the convergence rates obtained in \cite{MR4072212} and the regularity of the solution to the homogenized problem could be used.
\end{rmk}

\section{Preliminaries}\label{p2}
Due to the boundary geometry of $\partial \Omega^\varepsilon$, one can not expect a uniform point-wise boundary Lipschitz estimate. However, the macroscopic (large-scale) smoothness should be expected in real world, which may be a starting point in \cite{MR4199276} to introduce the following averaging operator.

For $p\in [1,\infty)$, define the averaging operator of parabolic version
	\begin{equation*}
		\mathcal{M}_r^p[g](x,t) = \left( \fint_{Q_r(x,t)} |g|^{p} \right)^{1/p},
	\end{equation*}
with $Q_r(x,t)=:B_r(x)\times (t-r^2,t)$.
Moreover, for convenience, sometimes we write $\mathcal{M}_r^2$ as $\mathcal{M}_r$ throughout the paper. The following lemma collects useful properties of $\mathcal{M}_r$, whose proof can be found in \cite[Lemma 2.1]{MR4702317} and \cite[Lemma 2.3]{MR4199276} for the elliptic version.

\begin{lemma}\label{l2.1}
		For $p\in[1,\infty)$ and $g\in L^p(Q_1)$, there hold the following inequalities:
		\begin{enumerate}
			\item For $1\le p'\le p<\infty$ and $Q_{r}(x,t)\subset Q_1$,
			\begin{equation}\label{2.1}
				\mathcal{M}_r^{p'}[g](x,t) \le \mathcal{M}_r^{p}[g](x,t).
			\end{equation}

			\item For $0< r_1\le r_2<1$ and $Q_{r_2}(x,t)\subset Q_1$,
			\begin{equation}\label{2.2}
				\mathcal{M}_{r_1}^{p}[g](x,t) \le C\left(\frac{r_2}{r_1}\right)^{(d+2)/p} \mathcal{M}_{r_2}^{p}[g](x,t).
			\end{equation}
			\item For $0<r_1\le r_2$ with $Q_{r_1+r_2}(x,t)\subset Q_1$,
			\begin{equation}\label{2.3}
				\int_{Q_{r_2}(x,t)} |g|^p
				\le C \int_{Q_{r_2}(x,t)} \mathcal{M}_{r_1}^p[g]^p
				\le C\int_{Q_{r_1+r_2}(x,t)} |g|^p.
			\end{equation}

			\item For $0<r_1\le r_2\le r$ with $Q_{r+r_1+r_2}(x,t)\subset Q_1$ and $q\in[p,\infty)$,
			\begin{equation}\label{2.4}
				\begin{aligned}
					\fint_{Q_{r}(x,t)} \mathcal{M}^{p}_{r_2} [g]^{q}
					\le C \fint_{Q_{r+r_2}(x,t)} \mathcal{M}^{p}_{r_1} [g]^{q}.
				\end{aligned}
			\end{equation}
			\item  For $0<r_1\le r_2$ with $Q_{r_1+r_2}(x,t)\subset B_1$,
			\begin{equation}\label{2.5}
				\mathcal{M}_{r_2}^p[g](x,t)
				\le C\fint_{Q_{r_1}(x,t)} \mathcal{M}_{r_2}^p[g].
			\end{equation}
	\end{enumerate}
		Here the constant $C$ depends on $p$ and $ p'$, but not on $r, r_1$ or $r_2$.
	\end{lemma}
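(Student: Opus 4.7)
The plan is to verify each of the five inequalities separately, noting that all five are elementary and rely only on Jensen's inequality, Fubini/Tonelli, and straightforward volume comparisons for parabolic cubes $Q_r(x,t)=B_r(x)\times(t-r^2,t)$ of measure $|Q_r|=c_d r^{d+2}$. There is no PDE content here; the work is purely measure-theoretic bookkeeping. The estimates in (1)--(5) are stated in \cite{MR4702317,MR4199276} for the elliptic analogue (with $Q_r$ replaced by $B_r$), and the parabolic versions follow by the same arguments with $B_r$ replaced by $Q_r$.

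For (1), I would apply Jensen's (equivalently Hölder's) inequality to the probability measure $|Q_r|^{-1}dy\,ds$ on $Q_r(x,t)$, which gives $\bigl(\fint |g|^{p'}\bigr)^{1/p'}\le\bigl(\fint|g|^{p}\bigr)^{1/p}$ whenever $p'\le p$. For (2), I would use $Q_{r_1}(x,t)\subset Q_{r_2}(x,t)$ to write $\fint_{Q_{r_1}}|g|^p\le \frac{|Q_{r_2}|}{|Q_{r_1}|}\fint_{Q_{r_2}}|g|^p=(r_2/r_1)^{d+2}\fint_{Q_{r_2}}|g|^p$ and take $p$-th roots. For the right-hand inequality of (3), the plan is a Fubini argument: the set $\{(x,t):(y,s)\in Q_{r_1}(x,t)\}=B_{r_1}(y)\times(s,s+r_1^2)$ has measure exactly $|Q_{r_1}|$, so
\begin{equation*}
\int_{Q_{r_2}}\mathcal{M}_{r_1}^{p}[g]^{p}=\frac{1}{|Q_{r_1}|}\int |g(y,s)|^{p}\bigl|\{(x,t)\in Q_{r_2}:(y,s)\in Q_{r_1}(x,t)\}\bigr|\,dy\,ds\le \int_{Q_{r_1+r_2}}|g|^{p},
\end{equation*}
using that the above set is nonempty only for $(y,s)\in Q_{r_1+r_2}$. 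For the left-hand inequality of (3), I would observe that for $(y,s)\in Q_{r_2}$ the intersection measure is at least $c|Q_{r_1}|$ for a dimensional constant $c>0$ (owing to the fact that the parabolic cube is a ``past'' cone whose natural translation still overlaps $Q_{r_2}$ in a fixed positive fraction), so the Fubini identity reverses to give $\int_{Q_{r_2}}|g|^{p}\le C\int_{Q_{r_2}}\mathcal{M}^{p}_{r_1}[g]^{p}$.

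For (4), I would first apply (3) with exponent $q$ in place of $p$ to bound $\int_{Q_r}\mathcal{M}^{p}_{r_2}[g]^{q}$ above by $\int_{Q_{r+r_2}}(\mathcal{M}^{p}_{r_1}[g]^{p})^{q/p}$ after using Jensen's inequality ($q\ge p$) to pass $\mathcal{M}^{p}_{r_2}[g]^{q}=\bigl(\fint_{Q_{r_2}}|g|^{p}\bigr)^{q/p}\le\fint_{Q_{r_2}}\mathcal{M}_{r_1}^{p}[g]^{q}$, then swap integration order. For (5), I would note that for every $(y,s)\in Q_{r_1}(x,t)$ with $r_1\le r_2$, one has $Q_{r_2}(x,t)\subset Q_{r_1+r_2}(y,s)\subset Q_{2r_2}(y,s)$, hence by (2)
\begin{equation*}
\mathcal{M}_{r_2}^{p}[g](x,t)\le C\,\mathcal{M}_{2r_2}^{p}[g](y,s)\le C\,\mathcal{M}_{r_2}^{p}[g](y,s),
\end{equation*}
where the last step uses that $Q_{r_2}(y,s)$ and $Q_{2r_2}(y,s)$ have comparable measure and (2). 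Averaging in $(y,s)$ over $Q_{r_1}(x,t)$ gives the claim; to convert the second ``$\le$'' I would in fact compare $\mathcal{M}^p_{2r_2}[g](y,s)^p$ to $\mathcal{M}^p_{r_2}[g](y,s)^p$ by a further application of (2) with the roles of $r_1,r_2$ replaced by $r_2,2r_2$.

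The only step requiring any care is the left inequality in (3), because the parabolic cube is one-sided in time: the Fubini overlap measure $|\{(x,t)\in Q_{r_2}:(y,s)\in Q_{r_1}(x,t)\}|$ can degenerate when $(y,s)$ lies near the top ($s\approx 0$) or spatial boundary ($|y|\approx r_2$) of $Q_{r_2}$. I expect this to be the only nontrivial point, and I would handle it by the standard trick of verifying a uniform lower bound $\ge c_d|Q_{r_1}|$ via elementary geometry (spherical-cap estimate in space and $\min(s+r_1^2,0)-s$ in time, comparing to $r_1^2$ via the constraint $r_1\le r_2$), with the constant $C$ in (3) depending only on $d$ and $p$.
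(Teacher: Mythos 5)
Parts (1), (2) and the right-hand inequality in (3) are correct as you argue, and your Fubini identity $\int_{Q_{r_2}}\mathcal{M}^p_{r_1}[g]^p=|Q_{r_1}|^{-1}\int|g(y,s)|^p\,\big|Q_{r_2}\cap\big(B_{r_1}(y)\times(s,s+r_1^2)\big)\big|\,dy\,ds$ is the right tool. The genuine gap is exactly at the point you flagged and then dismissed: the one-sidedness in time. For the left inequality of (3), the temporal factor of the overlap measure is $\min(r_1^2,\,t-s)$ for $(y,s)\in Q_{r_2}(x,t)$, which tends to $0$ as $s\uparrow t$; the constraint $r_1\le r_2$ is irrelevant to this, so the uniform lower bound $\ge c_d|Q_{r_1}|$ you invoke does not exist. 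Worse, the inequality itself fails for general $g$: take $g=\chi_{B_{r_2}(x)\times(t-\delta,t)}$ with $\delta\ll r_1^2$. Then $\int_{Q_{r_2}(x,t)}|g|^p=|B_{r_2}|\,\delta$, whereas $\mathcal{M}^p_{r_1}[g](x',t')^p\le\delta/r_1^2$ and vanishes for $t'\le t-\delta$, so $\int_{Q_{r_2}(x,t)}\mathcal{M}^p_{r_1}[g]^p\le|B_{r_2}|\,\delta^2/r_1^2$, and the claimed bound would force $r_1^2\le C\delta$. Since your proof of (4) routes through the left inequality of (3), it inherits this gap (the estimate (4) itself appears to survive, because the fine-scale average $\mathcal{M}^p_{r_1}[g]$ is pointwise \emph{larger} than $\mathcal{M}^p_{r_2}[g]$ near the top time slice, but a correct proof must exploit that mechanism rather than (3)).

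Your argument for (5) also contains a concretely false step: for $(y,s)\in Q_{r_1}(x,t)$ one has $s<t$, so the backward cylinder $Q_{r_1+r_2}(y,s)=B_{r_1+r_2}(y)\times(s-(r_1+r_2)^2,s)$ cannot contain the time slab $(s,t)$ of $Q_{r_2}(x,t)$; the inclusion $Q_{r_2}(x,t)\subset Q_{r_1+r_2}(y,s)$ is wrong. The same test function shows (5) fails as stated: $\mathcal{M}^p_{r_2}[g](x,t)=(\delta/r_2^2)^{1/p}>0$, while $\mathcal{M}^p_{r_2}[g](y,s)=0$ whenever $s\le t-\delta$, so $\fint_{Q_{r_1}(x,t)}\mathcal{M}^p_{r_2}[g]\le(\delta/r_1^2)(\delta/r_2^2)^{1/p}$. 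Note the paper itself offers no proof of this lemma; it cites the elliptic versions in \cite{MR4702317,MR4199276}, where the averaging sets are symmetric balls and none of these degeneracies occur. So the difficulty you identified is real and cannot be patched by a better overlap estimate: the left inequality of (3) and the inequality (5) only become true for one-sided parabolic cylinders after shifting the time anchor (for instance, averaging in (5) over $B_{r_1}(x)\times(t,t+r_1^2)$ instead of $Q_{r_1}(x,t)$, or evaluating the left-hand side at time $t-r_1^2$), and any correct proof has to build in such a shift.
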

	
Now, we give a rigorous proof of the hypothesis \eqref{1.6} if $\Omega^\varepsilon$ is a bounded Lipschitz domain (whose Lipschitz character may depend on $\varepsilon$) in $\mathbb{R}^d$.
\begin{lemma}\label{l2.2}
Assume that $\Omega^\varepsilon$ is a bounded Lipschitz domain in $\mathbb{R}^d$ (whose Lipschitz character may depend on $\varepsilon$, for fixed $\varepsilon>0$) and $u_\varepsilon\in L^2(-4,0;\Omega^\varepsilon_{2})$ is a weak solution to the parabolic equation/system \eqref{1.1}, then for any bounded Lipschitz domain $\Omega\subset B_{2}$ and for any $-2<-s<0$, there holds:
\begin{equation}\label{2.6}
||\partial_t \tilde{u}_\varepsilon||_{L^2(-s,0;H^{-1}(\Omega))}\leq C ||\nabla u_\varepsilon||_{L^2(\Omega\times(-s,0))},
\end{equation}
for the constant $C$ depending only on $\Lambda, d$ and $m$.
\end{lemma}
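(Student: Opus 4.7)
By duality, it suffices to prove that, for any $\varphi\in L^2(-s,0;H^1_0(\Omega))$,
\[
\langle\partial_t\tilde{u}_\varepsilon,\varphi\rangle = -\int_{-s}^0\int_{\Omega\cap\Omega^\varepsilon} u_\varepsilon\,\partial_t\varphi\,dx\,dt
\]
is bounded by $C\|\nabla u_\varepsilon\|_{L^2((\Omega\cap\Omega^\varepsilon)\times(-s,0))}\|\nabla\varphi\|_{L^2(\Omega\times(-s,0))}$; the identity uses that $\tilde{u}_\varepsilon$ vanishes on $B_2\setminus\Omega^\varepsilon$. By density I may assume $\varphi\in C^\infty_c(\Omega\times(-s,0))$.

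Integrating by parts in time and applying the weak equation \eqref{1.1} on $\Omega^\varepsilon_2$ (rigorously via a time-mollification of $u_\varepsilon$) would yield
\[
\langle\partial_t\tilde{u}_\varepsilon,\varphi\rangle
= -\int\!\!\int_{\Omega\cap\Omega^\varepsilon} A^\varepsilon\nabla u_\varepsilon\cdot\nabla\varphi\,dx\,dt + \int_{-s}^0\!\!\int_{\partial\Omega^\varepsilon\cap\Omega}(A^\varepsilon\nabla u_\varepsilon\cdot\nu)\,\varphi\,d\sigma\,dt,
\]
the only surviving part of $\partial\Omega^\varepsilon_2$ being $\partial\Omega^\varepsilon\cap\Omega$, since $\varphi$ is compactly supported in $\Omega\subset B_2$. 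The bulk term is immediately controlled by $\Lambda\|\nabla u_\varepsilon\|_{L^2}\|\nabla\varphi\|_{L^2}$ via ellipticity and Cauchy--Schwarz.

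For the boundary term I would use the duality pairing
\[
\Bigl|\int_{-s}^0\!\!\int_{\partial\Omega^\varepsilon\cap\Omega}(A^\varepsilon\nabla u_\varepsilon\cdot\nu)\,\varphi\,d\sigma\,dt\Bigr|\le\|A^\varepsilon\nabla u_\varepsilon\cdot\nu\|_{L^2(-s,0;H^{-1/2}(\partial\Omega^\varepsilon))}\|\varphi\|_{L^2(-s,0;H^{1/2}(\partial\Omega^\varepsilon))},
\]
combined with the Lipschitz trace estimate $\|\varphi\|_{L^2(H^{1/2}(\partial\Omega^\varepsilon))}\le C\|\varphi\|_{L^2(H^1(\Omega))}$. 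The conormal factor should then be controlled by the Green's identity applied at each time slice to the equation $-\operatorname{div}(A^\varepsilon\nabla u_\varepsilon)=-\partial_t u_\varepsilon$, giving
\[
\|A^\varepsilon\nabla u_\varepsilon\cdot\nu\|_{L^2(-s,0;H^{-1/2}(\partial\Omega^\varepsilon))}\le C\bigl(\|A^\varepsilon\nabla u_\varepsilon\|_{L^2}+\|\partial_t u_\varepsilon\|_{L^2(-s,0;H^{-1}(\Omega^\varepsilon_2))}\bigr)\le C\|\nabla u_\varepsilon\|_{L^2},
\]
where the final inequality uses $\|\partial_t u_\varepsilon\|_{L^2(H^{-1})}\le\Lambda\|\nabla u_\varepsilon\|_{L^2}$, a direct consequence of the equation.

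\textbf{Main obstacle.} The technically delicate step is rigorously justifying the parabolic conormal estimate, because the natural formal pairing $\langle A^\varepsilon\nabla u_\varepsilon\cdot\nu,g\rangle=\int A^\varepsilon\nabla u_\varepsilon\cdot\nabla G-\langle\partial_t u_\varepsilon,G\rangle$ for an $H^1$-extension $G$ of $g$ forces one to pair $\partial_t u_\varepsilon\in L^2(H^{-1})$ with a function that is only $H^1$ rather than $H^1_0$. Overcoming this requires a space-time Green's identity that exploits the parabolic embedding $u_\varepsilon\in C([-s,0];L^2(\Omega^\varepsilon_2))$ together with the Poincar\'e inequality on $\Omega^\varepsilon_2$ (available because $u_\varepsilon=0$ on $\partial\Omega^\varepsilon\cap B_2$), set up carefully so that the final bound depends purely on $\|\nabla u_\varepsilon\|_{L^2}$ and not on any time-regularity of the test function $\varphi$.
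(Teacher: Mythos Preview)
Your overall decomposition (bulk plus boundary term, then duality on the boundary) matches the paper's starting point, but there is a genuine gap in how you bound the boundary term, and it is precisely the gap the paper's argument is designed to close.

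Your proposed bound uses the pairing
\[
\|A^\varepsilon\nabla u_\varepsilon\cdot\nu\|_{L^2(-s,0;H^{-1/2}(\partial\Omega^\varepsilon))}\,\|\varphi\|_{L^2(-s,0;H^{1/2}(\partial\Omega^\varepsilon))}
\]
together with the trace inequality $\|\varphi\|_{H^{1/2}(\partial\Omega^\varepsilon)}\le C\|\varphi\|_{H^1(\Omega)}$. Both constants here---the trace constant and the constant in the normal-trace (Green's identity) estimate---depend on the Lipschitz character of $\partial\Omega^\varepsilon$, which by hypothesis may depend on $\varepsilon$. Hence your final $C$ would not be independent of $\varepsilon$, contradicting the statement. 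The paper flags exactly this: the $H^{-1/2}$ bound on $A^\varepsilon\nabla v_\varepsilon\cdot n_\varepsilon$ ``may depend on $\varepsilon$, therefore we need to bound this trace uniformly.''

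The paper's key device, absent from your proposal, is an $A^\varepsilon$-harmonic replacement at each time slice: for fixed $t$, let $v_\varepsilon(\cdot,t)$ solve $-\operatorname{div}(A^\varepsilon\nabla v_\varepsilon)=0$ in $\Omega^\varepsilon_2\cap\Omega$ with $v_\varepsilon=u_\varepsilon$ on $\partial(\Omega^\varepsilon_2\cap\Omega)$. The plain energy estimate gives $\|\nabla v_\varepsilon\|_{L^2}\le C(\Lambda)\|\nabla u_\varepsilon\|_{L^2}$ with a constant depending only on $\Lambda$, not on any trace constant. Extending $v_\varepsilon$ by zero to $\tilde v_\varepsilon$ on $\Omega$ and integrating by parts on the \emph{fixed} Lipschitz domain $\Omega$ identifies the boundary term with $\int_\Omega A^\varepsilon\nabla\tilde v_\varepsilon\cdot\nabla\phi$, which is then bounded by $\Lambda\|\nabla\tilde v_\varepsilon\|_{L^2(\Omega)}\|\nabla\phi\|_{L^2(\Omega)}\le C(\Lambda)\|\nabla u_\varepsilon\|_{L^2}\|\nabla\phi\|_{L^2}$. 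This simultaneously (i) makes every constant depend only on $\Lambda,d,m$ and (ii) sidesteps the obstacle you identify about pairing $\partial_t u_\varepsilon\in L^2(H^{-1})$ with a non-$H^1_0$ extension, since $v_\varepsilon$ solves a purely elliptic equation with zero right-hand side and its conormal trace is defined by the standard elliptic theory.
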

\begin{proof}We need to consider the following three cases: Case (1), $\Omega\subset (B_2\setminus \Omega^\varepsilon)$; Case (2), $\Omega\subset (\Omega^\varepsilon\cap B_2)$; Case (3), $\Omega\cap ( B_2\cap \partial\Omega^\varepsilon)\neq \emptyset$.

It is easy to check that if $\Omega\subset B_2\setminus \Omega^\varepsilon$, then $||\partial_t
\tilde{u}_\varepsilon||_{L^2(-s,0;H^{-1}(\Omega))}=0$. Moreover, if $\Omega\subset
(\Omega^\varepsilon\cap B_2)$, then by equation \eqref{1.1} and the extension \eqref{1.5}, we have
$$||\partial_t \tilde{u}_\varepsilon||_{L^2(-s,0;H^{-1}(\Omega))}=||\partial_t {u}_\varepsilon||_{L^2(-s,0;H^{-1}(\Omega))}\leq C||\nabla u_\varepsilon||_{L^2(\Omega\times(-s,0))}.$$

Therefore, we need only to consider the case $\Omega\cap ( B_2\cap \partial\Omega^\varepsilon)\neq \emptyset$.
For any $\phi\in C^\infty(-s,0;C^\infty_0(\Omega))$, a direct computation shows that
\begin{equation}\label{2.7}\begin{aligned}
\int_{-s}^0\int_{\Omega}\partial_t \tilde{u}_\varepsilon\cdot
\phi=&\int_{\Omega\times\{0\}}\tilde{u}_\varepsilon
\phi-\int_{\Omega\times\{-s\}}\tilde{u}_\varepsilon \phi-\int^0_{-s}\int_{\Omega}
\tilde{u}_\varepsilon\cdot \partial_t\phi\\
=&\int_{(\Omega^\varepsilon_2\cap \Omega)\times\{0\}}{u}_\varepsilon \phi-\int_{(\Omega^\varepsilon\cap
\Omega)\times\{-s\}}{u}_\varepsilon \phi-\int_{-s}^0\int_{\Omega^\varepsilon_2\cap \Omega}
{u}_\varepsilon\cdot \partial_t\phi\\
=&\int_{-s}^0\int_{\Omega^\varepsilon_2\cap \Omega}
\partial_t{u}_\varepsilon\cdot\phi=\int_{-s}^0\int_{\Omega^\varepsilon_2\cap \Omega}
\operatorname{div}(A^\varepsilon\nabla {u}_\varepsilon)\cdot\phi\\
=&-\int_{-s}^0\int_{\Omega^\varepsilon_2\cap \Omega} A^\varepsilon\nabla
{u}_\varepsilon\cdot\nabla\phi+\int_{-s}^0\int_{\partial(\Omega^\varepsilon_2\cap \Omega)}A^\varepsilon\nabla {u}_\varepsilon\cdot n_\varepsilon\cdot\phi,
\end{aligned}\end{equation}
where $n_\varepsilon$ denotes the outward unit normal to $\partial(\Omega^\varepsilon_2 \cap \Omega)$. Now we need to determine the trace of $A^\varepsilon\nabla {u}_\varepsilon\cdot n_\varepsilon$ on the boundary $\partial(\Omega^\varepsilon_2 \cap \Omega)$ and bound the second term uniformly in $\varepsilon$ in \eqref{2.7}.

For this purpose, we note that, for almost every $t\in[-4,0]$, we have $u_\varepsilon(\cdot,t)\in
H^1(\Omega^\varepsilon_2 \cap \Omega)$. Now, by viewing $t$ as a parameter, we introduce the following equation:
\begin{equation} \label{2.8}
\left\{\begin{aligned}
- \operatorname{div}\left(A^\varepsilon \nabla v_\varepsilon(\cdot,t) \right) &= 0 \quad\quad \ \quad\text{ in } \Omega^\varepsilon_2 \cap \Omega,\\
v_\varepsilon(\cdot,t)&=u_\varepsilon(\cdot,t) \quad \text{ on } \partial(\Omega^\varepsilon_2 \cap \Omega).\\
\end{aligned}\right.
\end{equation}

Note that $u_\varepsilon(\cdot, t)=0$ for $x\in \partial\Omega^\varepsilon \cap \Omega$, then we set
\begin{equation}\label{2.9}
\tilde{v}_\varepsilon(x,t)=\left\{\begin{aligned}
&v_\varepsilon(x,t),\quad  &\text{ if }x\in \Omega^\varepsilon_2 \cap \Omega,\\
&0, &\text{ if }x\in \Omega\setminus\Omega^\varepsilon_2.
\end{aligned}\right.\end{equation}

Firstly, basic energy estimates yield that
\begin{equation}\label{2.10}\begin{aligned}
&||\nabla \tilde{v}_\varepsilon(\cdot,t)||_{L^2(\Omega)}\leq ||\nabla v_\varepsilon(\cdot,t)||_{L^2(\Omega^\varepsilon_2 \cap \Omega)}\\[6pt]
\leq& C||\nabla u_\varepsilon(\cdot,t)||_{L^2(\Omega^\varepsilon \cap \Omega)}=C||\nabla u_\varepsilon(\cdot,t)||_{L^2( \Omega)}.
\end{aligned}\end{equation}

Secondly, according to \cite[Theorem 4.15]{MR4242224} (see also \cite[Lemma 20.2]{MR2328004} for the
case of $p=2$), we know that the trace of $A^\varepsilon \nabla v_\varepsilon(\cdot,t)\cdot
n_\varepsilon$ on $\partial(\Omega^\varepsilon_2 \cap B_2)$ is meaningful and belongs to
$H^{-1/2}(\partial(\Omega^\varepsilon_2 \cap B_2))$.  Note that this
$H^{-1/2}(\partial(\Omega^\varepsilon_2 \cap B_2))$ bound of $A^\varepsilon \nabla
v_\varepsilon(\cdot,t)\cdot n_\varepsilon$ given in \cite[Theorem 4.15]{MR4242224} may depend on $\varepsilon$, therefore we need to bound this trace uniformly.

Now, for the same $\phi$ in \eqref{2.7}, we have $\phi(\cdot,t)\in H^1_0(\Omega)$ and a direct computation shows that
\begin{equation}\label{2.11}\begin{aligned}
&\int_{\Omega}- \operatorname{div}\left(A^\varepsilon \nabla \tilde{v}_\varepsilon(\cdot,t) \right)\cdot \phi(\cdot,t)=\int_{\Omega}A^\varepsilon \nabla \tilde{v}_\varepsilon(\cdot,t)\nabla \phi(\cdot,t)\\
=&\int_{\Omega^\varepsilon_2 \cap \Omega}A^\varepsilon \nabla {v}_\varepsilon(\cdot,t)\nabla \phi(\cdot,t)\\
=&\int_{\Omega^\varepsilon_2 \cap \Omega}- \operatorname{div}\left(A^\varepsilon \nabla {v}_\varepsilon(\cdot,t) \right)\cdot \phi(\cdot,t)
+\int_{\partial(\Omega^\varepsilon_2 \cap \Omega)}A^\varepsilon\nabla {v}_\varepsilon(\cdot,t) \cdot n_\varepsilon \cdot \phi(\cdot,t)\\
=&\int_{\partial(\Omega^\varepsilon_2 \cap \Omega)}A^\varepsilon\nabla {u}_\varepsilon(\cdot,t) \cdot n_\varepsilon \cdot \phi(\cdot,t).
\end{aligned}\end{equation}

Therefore, it directly follows from \eqref{2.10} and \eqref{2.11} after integration with respect to $t$ over $(-s,0)$ that
\begin{equation}\label{2.12}\begin{aligned}
\left|\int_{-s}^0\int_{\partial(\Omega^\varepsilon_2 \cap \Omega)}A^\varepsilon\nabla {u}_\varepsilon \cdot n_\varepsilon \cdot \phi\right|
=&\left|\int_{-s}^0\int_{\Omega}\operatorname{div}\left(A^\varepsilon \nabla \tilde{v}_\varepsilon \right)\cdot \phi\right|\\[9pt]
\leq& C||\nabla \tilde{v}_\varepsilon||_{L^2(\Omega\times(-s,0))}||\nabla \phi||_{L^2(\Omega\times(-s,0))}\\[9pt]
\leq& C||\nabla u_\varepsilon||_{L^2(\Omega\times(-s,0))}||\nabla \phi||_{L^2(\Omega\times(-s,0))}.\\[9pt]
\end{aligned}\end{equation}

Now, back to \eqref{2.7} after in view of \eqref{2.12}, for any $\phi\in C^\infty(-s,0;C^\infty_0(\Omega))$, there holds
\begin{equation*}
\left|\int_{-s}^0\int_{\Omega}\partial_t \tilde{u}_\varepsilon\cdot \phi\right|\leq C||\nabla u_\varepsilon||_{L^2(\Omega\times(-s,0)}||\nabla \phi||_{L^2(\Omega\times(-s,0))},
\end{equation*}
which, by density, immediately implies the desired estimates \eqref{2.6}.
\end{proof}

\begin{rmk}\label{r2.3}
(i) If we choose $\Omega=\Omega^\varepsilon \cap B_r$ in \eqref{2.6}, then
\begin{equation*}
||\partial_t \tilde{u}_\varepsilon||_{L^2(-s,0;H^{-1}(\Omega^\varepsilon_r))}\leq C ||\nabla u_\varepsilon||_{L^2(\Omega^\varepsilon_r\times(-s,0))}.
\end{equation*}


(ii) For more general domain $\Omega$ and $\Omega^\varepsilon$, the estimates \eqref{2.10} continue to hold under the same extension of $\tilde{v}_\varepsilon$. Now, a careful detection shows that if the term $\int_{\partial(\Omega^\varepsilon_2 \cap \Omega)}A^\varepsilon\nabla {v}_\varepsilon(\cdot,t) \cdot n_\varepsilon \cdot \phi(\cdot,t)$ in \eqref{2.7} and \eqref{2.11} is meaningful, then the estimate \eqref{2.12} continues to hold. Actually, this problem is related to the domains in which the divergence theorem applies, and this question need to be dived into geometric measure theory a bit. We do dot pursue the details and refer the readers to \cite[Section 3.3]{MR1857292} for more results about this issue.

\end{rmk}

\section{Large-scale Meyers estimate}\label{l3}
In order to obtain a large-scale Meyers estimate, we first introduce the following Caccioppoli inequality of parabolic version, whose proof can be found in \cite[Lemma B.3]{MR3812862} for the non-oscillating domain.

\begin{lemma}[parabolic Caccioppoli inequality]\label{l3.1}
Let $\varepsilon^*=\varepsilon\zeta(\varepsilon,1)$ and $r\in (\varepsilon^*,1)$.
Suppose that $u_\varepsilon\in L^2(-4r^2,0; H^1(\Omega^\varepsilon_{2r}))$  satisfies
\begin{equation} \label{3.1}
\left\{\begin{aligned}
\partial_t u_\varepsilon - \operatorname{div}\left(A^\varepsilon \nabla u_\varepsilon \right) = 0 \quad \text{ in } Q_{2r}^\varepsilon,\\
u_\varepsilon=0 \quad \text{ on } \Delta_{2r}^\varepsilon,\\
\end{aligned}\right.
\end{equation}
then there exists a constant $C$ depending only on $d, \Lambda$ and $m$, such that
\begin{equation}
\label{3.2}
 \left\| \nabla u_\varepsilon \right\|_{L^2(Q_r)}
\leq Cr^{-1}\left\|  u_\varepsilon \right\|_{L^2(Q_{2r})}
\end{equation}
and
\begin{equation}
\label{3.3}
\sup_{s\in (-r^2,0)} \left\|  u_\varepsilon(\cdot,s) \right\|_{L^2(B_r)}
\leq C\left\|  \nabla u_\varepsilon \right\|_{L^2(Q_{2r})}.
\end{equation}
\end{lemma}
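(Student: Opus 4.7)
The plan is to carry out the standard parabolic Caccioppoli argument with a space-time cutoff, exploiting the fact that $u_\varepsilon$ vanishes on the lateral rough boundary $\Delta_{2r}^\varepsilon$ to legitimize the multiplier. Concretely, I would fix $\eta\in C_c^\infty(B_{2r})$ with $\eta\equiv 1$ on $B_r$, $0\le\eta\le 1$ and $|\nabla\eta|\le C/r$, together with $\chi\in C^\infty(\mathbb{R})$ with $\chi\equiv 0$ for $t\le -4r^2$, $\chi\equiv 1$ for $t\ge -r^2$, and $|\chi'|\le C/r^2$. The function $\varphi:=u_\varepsilon\eta^2\chi^2$ vanishes on $\Delta_{2r}^\varepsilon$ (because $u_\varepsilon$ does) and is supported away from $\partial B_{2r}$ in the spatial variable (via $\eta$), hence $\varphi(\cdot,t)\in H^1_0(\Omega_{2r}^\varepsilon)$ for a.e.\ $t$ and is admissible in the weak formulation of \eqref{3.1}.

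For each $s\in(-r^2,0)$, I test \eqref{3.1} against $\varphi$ on $(-4r^2,s)$. The time-derivative contribution produces, via the standard chain-rule identity together with $\chi(-4r^2)=0$,
\begin{equation*}
\int_{-4r^2}^{s}\langle\partial_t u_\varepsilon,\,u_\varepsilon\eta^2\chi^2\rangle\,dt=\tfrac{1}{2}\int_{\Omega_{2r}^\varepsilon}u_\varepsilon^2(\cdot,s)\eta^2\chi^2(s)-\int_{-4r^2}^{s}\!\!\int_{\Omega_{2r}^\varepsilon}u_\varepsilon^2\eta^2\chi\chi',
\end{equation*}
while the elliptic contribution, after integration by parts in space (boundary terms vanish since $\varphi$ is zero on $\partial\Omega_{2r}^\varepsilon$), equals
\begin{equation*}
\int_{-4r^2}^{s}\!\!\int_{\Omega_{2r}^\varepsilon}A^\varepsilon\nabla u_\varepsilon\cdot\nabla u_\varepsilon\,\eta^2\chi^2\;+\;2\int_{-4r^2}^{s}\!\!\int_{\Omega_{2r}^\varepsilon}A^\varepsilon\nabla u_\varepsilon\cdot\nabla\eta\,u_\varepsilon\eta\chi^2.
\end{equation*}
The ellipticity condition \eqref{1.9} bounds the first summand below by $\Lambda^{-1}\int|\nabla u_\varepsilon|^2\eta^2\chi^2$, while Young's inequality absorbs a small fraction of this into the cross term at the cost of $C\int u_\varepsilon^2|\nabla\eta|^2\chi^2$. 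Combining with the time-derivative identity and the bounds on $|\nabla\eta|$, $|\chi'|$ yields
\begin{equation*}
\int_{B_r}u_\varepsilon^2(\cdot,s)+\int_{Q_r^\varepsilon}|\nabla u_\varepsilon|^2\chi^2\le\frac{C}{r^2}\int_{Q_{2r}^\varepsilon}u_\varepsilon^2,
\end{equation*}
where I have used $\eta\equiv 1$ on $B_r$. Taking the supremum over $s\in(-r^2,0)$ yields \eqref{3.3}, and dropping the sup while using $\chi\equiv 1$ on $(-r^2,0)$ together with the zero-extension identities $\|\nabla\tilde u_\varepsilon\|_{L^2(Q_r)}=\|\nabla u_\varepsilon\|_{L^2(Q_r^\varepsilon)}$ and $\|\tilde u_\varepsilon\|_{L^2(Q_{2r})}=\|u_\varepsilon\|_{L^2(Q_{2r}^\varepsilon)}$ yields \eqref{3.2}.

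The only non-routine point is justifying the chain-rule identity for $\partial_t u_\varepsilon\in L^2(-4,0;H^{-1}(\Omega_2^\varepsilon))$ paired against $u_\varepsilon\eta^2\chi^2\in L^2(-4,0;H^1_0(\Omega_{2r}^\varepsilon))$. The standard remedy is Steklov time-averaging: the time-mollified functions $u_\varepsilon^h(x,t):=h^{-1}\int_t^{t+h}u_\varepsilon(x,\tau)\,d\tau$ are smooth in $t$, satisfy the identity pointwise, and one passes to the limit $h\to 0^+$ using $u_\varepsilon\in C([-4,0];L^2(\Omega_2^\varepsilon))$ (a consequence of the energy-space structure). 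This step is identical to the non-oscillating case handled in \cite[Lemma B.3]{MR3812862}; the rough geometry plays no role because the Dirichlet condition on $\Delta_{2r}^\varepsilon$ already guarantees admissibility of $\varphi$, and no appeal to the extension hypothesis \eqref{1.6} is needed at this stage.
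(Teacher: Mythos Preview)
Your argument is essentially the same as the paper's (a space--time cutoff multiplier, ellipticity plus Young to absorb the cross term), and it correctly yields \eqref{3.2}. However, there is a gap in your derivation of \eqref{3.3}: the displayed energy inequality you obtain reads
\[
\sup_{s\in(-r^2,0)}\int_{B_r}u_\varepsilon^2(\cdot,s)\;\le\;\frac{C}{r^2}\int_{Q_{2r}^\varepsilon}u_\varepsilon^2,
\]
whose right-hand side is $Cr^{-2}\|u_\varepsilon\|_{L^2(Q_{2r})}^2$, \emph{not} $C\|\nabla u_\varepsilon\|_{L^2(Q_{2r})}^2$ as required by \eqref{3.3}. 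Taking the supremum does not produce \eqref{3.3} directly.

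The missing step is a Poincar\'e inequality: since $u_\varepsilon(\cdot,t)$ vanishes on $\partial\Omega^\varepsilon\cap B_{2r}$ and extends by zero to $B_{2r}$ (with the complement $B_{2r}\setminus\Omega^\varepsilon_{2r}$ having volume comparable to $|B_{2r}|$ by the $\varepsilon$-flatness of Definition~\ref{d1.1}), one has $\|u_\varepsilon(\cdot,t)\|_{L^2(B_{2r})}\le Cr\|\nabla u_\varepsilon(\cdot,t)\|_{L^2(B_{2r})}$ for a.e.\ $t$, and integrating in time converts your right-hand side into $C\|\nabla u_\varepsilon\|_{L^2(Q_{2r})}^2$. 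The paper handles this explicitly: in its proof of \eqref{3.3} it re-tests the equation on $(-4r^2,s)$, discards the favorable $\eta^2|\nabla u_\varepsilon|^2$ term, and invokes Poincar\'e in the analogue of your $r^{-2}\int u_\varepsilon^2$ term (see \eqref{3.9}). With that one additional line your proof is complete.
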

\begin{proof}
We take $\eta_r \in C^\infty(Q_{2r})$ satisfying
\begin{equation*} \label{}
0\leq \eta_r \leq 1, \
\eta_r \equiv 1 \ \mbox{on} \ Q_r,  \ \eta \equiv 0 \ \mbox{on} \ Q_{2r}\setminus Q_{3r/2},\
\left| \partial_t \eta \right| + \left| \nabla \eta \right|^2 \leq Cr^{-2}.
\end{equation*}

Note that the function $\phi := \eta^2_r u_\varepsilon \in L^2(I_{2r};H^{1}_0(\Omega^\varepsilon_{2r}))$, then testing the equation \eqref{3.1} with $\phi$ implies that
\begin{equation} \label{3.4}
\int_{Q_{2r}^\varepsilon}- \phi \cdot\partial_tu_\varepsilon
= \int_{Q_{2r}^\varepsilon}\nabla \phi\cdot A^\varepsilon\nabla u_\varepsilon.
\end{equation}

A direct computation shows that
\begin{equation}\begin{aligned} \label{3.5}
\int_{Q_{2r}^\varepsilon}\nabla \phi\cdot A^\varepsilon \nabla u_\varepsilon
&
\geq \frac{1}{\Lambda} \int_{Q_{2r}^\varepsilon} \eta_r^2 \left| \nabla u_\varepsilon \right|^2 -  C \int_{Q_{2r}^\varepsilon} \eta_r \left| \nabla \eta_r \right| |u_\varepsilon| \left| \nabla u_\varepsilon \right|
\\ &
\geq \frac{1}{2\Lambda} \int_{Q_{2r}^\varepsilon} \eta_r^2 \left| \nabla u_\varepsilon \right|^2 - C\int_{Q_{2r}^\varepsilon} \left| \nabla \eta_r \right|^2 |u_\varepsilon|^2
\\ &
\geq  \frac{1}{2\Lambda} \int_{Q_{2r}^\varepsilon} \eta_r^2 \left| \nabla u_\varepsilon \right|^2 - Cr^{-2} \int_{Q_{2r}^\varepsilon}  |u_\varepsilon|^2
\end{aligned}\end{equation}

and
\begin{equation}\label{3.6}\begin{aligned}
\int_{Q_{2r}^\varepsilon} -\eta^2_r u_\varepsilon \cdot\partial_tu_\varepsilon
&
\leq - \int_{Q_{2r}^\varepsilon} \partial_t \left( \frac12 \eta^2_r u_\varepsilon^2 \right) + \int_{Q_{2r}^\varepsilon} \eta_r \left| \partial_t\eta_r \right| u_\varepsilon^2\\
 & \leq - \frac12 \int_{\Omega_{2r}^\varepsilon} \eta_r^2(x,0) u_\varepsilon^2(x,0)\,dx + Cr^{-2} \int_{Q_{2r}^\varepsilon} u_\varepsilon^2.
\end{aligned}\end{equation}

Combining \eqref{3.4}-\eqref{3.6} yields that
\begin{equation*} \label{}
\frac12 \int_{\Omega^\varepsilon_{2r}} \eta_r^2(x,0) u_\varepsilon^2(x,0)\,dx +  \frac1{2\Lambda} \int_{Q_{2r}^\varepsilon} \eta_r^2 \left| \nabla u_\varepsilon \right|^2
\leq
Cr^{-2} \int_{Q_{2r}^\varepsilon}  |u_\varepsilon|^2,
\end{equation*}
which eventually implies the desired estimate \eqref{3.2}.

To proceed, testing the equation \eqref{3.1} with $\phi= \eta^2_r u_\varepsilon$ and integrating the resulting equation over $\Omega_{2r}^\varepsilon\times(-4r^2,s)$ for fixed $s\in (-4r^2,0)$ yields that

\begin{equation}\label{3.7}
\int_{-4r^2}^s\int_{\Omega_{2r}^\varepsilon}- \phi \cdot\partial_tu_\varepsilon
= \int_{-4r^2}^s\int_{\Omega_{2r}^\varepsilon}\nabla \phi\cdot A^\varepsilon\nabla u_\varepsilon.
\end{equation}

Now, a direct computation yields that
\begin{equation}\label{3.8}
\begin{aligned}
&\int_{-4r^2}^s\int_{\Omega_{2r}^\varepsilon}\nabla \phi\cdot A^\varepsilon \nabla u_\varepsilon\\[5pt]
\geq& -C \left\| \nabla \eta_r \nabla u_\varepsilon \right\|_{L^2({\Omega_{2r}^\varepsilon}\times(-4r^2,s))}  \left\| u_\varepsilon \eta_r \right\|_{L^2({\Omega_{2r}^\varepsilon}\times(-4r^2,s))}
\\[5pt]
\geq &-C  \left\| \nabla u_\varepsilon \right\|_{L^2({\Omega_{2r}^\varepsilon}\times(-4r^2,s))}^2 - \frac1{16}r^{-2}  \int_{-4r^2}^s\int_{\Omega_{2r}^\varepsilon} \eta_r^2 u_\varepsilon^2
\\[5pt]
\geq& -C  \left\|  \nabla u_\varepsilon \right\|_{L^2({\Omega_{2r}^\varepsilon}\times(-4r^2,s))}^2 - \frac14 \sup_{t\in (-4r^2,0)} \int_{\Omega_{2r}^\varepsilon } \eta_r^2(x,t) u_\varepsilon^2(x,t)\,dx,
\end{aligned}\end{equation}
and
\begin{equation}\label{3.9}
\begin{aligned}
\int_{-4r^2}^s\int_{\Omega_{2r}^\varepsilon} -\eta^2_r u_\varepsilon \cdot\partial_tu_\varepsilon
&
\leq - \int_{-4r^2}^s\int_{\Omega_{2r}^\varepsilon} \partial_t \left( \frac12 \eta^2_r u_\varepsilon^2 \right) + \int_{-4r^2}^s\int_{\Omega_{2r}^\varepsilon} \eta_r \left| \partial_t\eta_r \right| u_\varepsilon^2\\
 & \leq - \frac12 \int_{\Omega_{2r}^\varepsilon} \eta_r^2(x,s) u_\varepsilon^2(x,s)\,dx + Cr^{-2} \int_{Q_{2r}^\varepsilon} u_\varepsilon^2\\
 & \leq - \frac12 \int_{\Omega_{2r}^\varepsilon} \eta_r^2(x,s) u_\varepsilon^2(x,s)\,dx + C\left\|  \nabla u_\varepsilon \right\|_{L^2({Q_{2r}^\varepsilon})}^2,
\end{aligned}\end{equation}
we have used the Poincar\'e inequality in \eqref{3.9}.

Consequently, combining \eqref{3.7}-\eqref{3.9} yields that
\begin{multline*} \label{}
\frac12 \int_{\Omega^\varepsilon_{2r}} \eta_r^2(x,s) u_\varepsilon^2(x,s)\,dx
\leq C\left\|  \nabla u_\varepsilon \right\|_{L^2({Q_{2r}^\varepsilon})}^2 +  \frac14 \sup_{t\in (-4r^2,0)}  \int_{\Omega^\varepsilon \cap B_{2r}} \eta_r^2(t,x) u^2(t,x)\,dx.
\end{multline*}
Taking the supremum over $s\in (-4r^2,0)$ and rearranging finally imply the desired estimate \eqref{3.3}.
\end{proof}

The following lemma states a reverse H\"{o}lder inequality for $u_\varepsilon$, whose proof can be found in \cite[Lemma B.4]{MR3812862} for the non-oscillating domain.

\begin{lemma}\label{l3.2}
Let $\varepsilon^*=\varepsilon\zeta(\varepsilon,1)$ and $r\in (\varepsilon^*,1)$.
Suppose that $u_\varepsilon\in L^2(-16r^2,0; H^1(\Omega^\varepsilon_{4r}))$  satisfies
\begin{equation} \label{3.10}
\left\{\begin{aligned}
\partial_t u_\varepsilon - \operatorname{div}\left(A^\varepsilon \nabla u_\varepsilon \right) = 0 \quad \text{ in } Q_{4r}^\varepsilon,\\
u_\varepsilon=0 \quad \text{ on } \Delta_{4r}^\varepsilon.\\
\end{aligned}\right.
\end{equation}
For simplicity,
denote $q:=2_* = 2d/(2+d)$ for $d\geq 3$ and $q>1$ for d=2.
Then there exists a constant $C<\infty$, depending only on $d,\Lambda,m$, such that for every $\delta>0$,
\begin{equation}\label{3.11}\begin{aligned}
\left(\fint_{Q_r}|\nabla u_\varepsilon|^2\right)^{1/2}\leq  C\delta^{-3}\left(\fint_{Q_{2r}}|\nabla
u_\varepsilon|^q\right)^{1/q}+\delta\left(\fint_{Q_{4r}}|\nabla u_\varepsilon|^2\right)^{1/2}.
\end{aligned}\end{equation}
\end{lemma}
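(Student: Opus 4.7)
My plan is to carry out the parabolic analogue of the classical Caccioppoli-plus-Poincar\'e--Sobolev argument that yields an elliptic reverse H\"older inequality. The only geometric input one needs is that the zero-extension $\tilde u_\varepsilon$ from (1.5) vanishes on the set $B_{2r}\setminus\Omega^\varepsilon$; by Definition \ref{d1.1} with $\varsigma\le 1/4$, this set has measure comparable to $|B_{2r}|$ uniformly in $r$ and $\varepsilon$. Consequently the spatial Sobolev--Poincar\'e inequality
$$\|\tilde u_\varepsilon(\cdot,t)\|_{L^2(B_{2r})}\le C\|\nabla \tilde u_\varepsilon(\cdot,t)\|_{L^q(B_{2r})}$$
holds a.e.\ in $t$ with $C$ independent of $r,\varepsilon$, where the Sobolev conjugate is chosen so that $q^*=2$ (i.e.\ $q=2_*=2d/(d+2)$ when $d\ge 3$, any $q>1$ when $d=2$).

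With this in hand, I decompose $\|u_\varepsilon(\cdot,t)\|_{L^2}^2=\|u_\varepsilon(\cdot,t)\|_{L^2}^{2-q}\cdot\|u_\varepsilon(\cdot,t)\|_{L^2}^q$, pull the first factor out as a supremum in $t$, apply the spatial inequality above to the second factor, and integrate over $I_{2r}$. Replacing the supremum via (3.3) applied at scale $2r$ yields
$$\|u_\varepsilon\|_{L^2(Q_{2r}^\varepsilon)}^2\le C\,\|\nabla u_\varepsilon\|_{L^2(Q_{4r}^\varepsilon)}^{2-q}\,\|\nabla u_\varepsilon\|_{L^q(Q_{2r}^\varepsilon)}^q.$$
Feeding this back into the Caccioppoli bound (3.2) produces
$$\|\nabla u_\varepsilon\|_{L^2(Q_r^\varepsilon)}^2\le Cr^{-2}\,\|\nabla u_\varepsilon\|_{L^2(Q_{4r}^\varepsilon)}^{2-q}\,\|\nabla u_\varepsilon\|_{L^q(Q_{2r}^\varepsilon)}^q.$$
A dimensional check confirms that passing to normalized averages $\fint$ makes every explicit power of $r$ cancel (the relation $d/q-d/q^*=1$, i.e.\ $q^*=2$, is precisely what forces $-2+(d+2)(2-q)/2=0$); the outcome is the clean product inequality $Z^2\le C X^{2-q}Y^q$ with $X^2=\fint_{Q_{4r}}|\nabla u_\varepsilon|^2$, $Y=(\fint_{Q_{2r}}|\nabla u_\varepsilon|^q)^{1/q}$, and $Z^2=\fint_{Q_r}|\nabla u_\varepsilon|^2$.

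The conclusion follows from Young's inequality with conjugate exponents $2/(2-q)$ and $2/q$: $X^{2-q}Y^q\le \delta^2 X^2+C\,\delta^{-2(2-q)/q}Y^2$. Taking square roots and using that $(2-q)/q$ equals $2/d$ for $d\ge 3$ and is bounded by $1$ for $d=2$, $q>1$, so the loose choice $\delta^{-3}$ in (3.11) is comfortably valid. The main obstacle I anticipate is the spatial Sobolev--Poincar\'e step: one must verify that the geometric flatness in Definition \ref{d1.1} yields a Poincar\'e constant that does not degenerate as $r\downarrow\varepsilon^*$, and that the zero-extension genuinely produces an $H^1(B_{2r})$ function, which is immediate from $u_\varepsilon=0$ on $\Delta^\varepsilon_{4r}$ combined with the discussion following (1.5).
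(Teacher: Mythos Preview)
Your argument is correct and uses the same building blocks as the paper --- the Caccioppoli inequality \eqref{3.2}, the sup-in-time bound \eqref{3.3}, a spatial Sobolev--Poincar\'e inequality for the zero extension on $B_{2r}$, and Young's inequality --- but organizes the interpolation more economically. The paper pulls out the factor $\sup_t\|u_\varepsilon(\cdot,t)\|_{L^2}$ to the first power (see \eqref{3.12}), then decomposes the remaining $\|u_\varepsilon(\cdot,t)\|_{L^2}$ via a spatial H\"older $\|u_\varepsilon\|_{L^2}\le\|u_\varepsilon\|_{L^q}^{1/2}\|u_\varepsilon\|_{L^{q'}}^{1/2}$ and applies two separate inequalities (the $L^q$ Poincar\'e and the Sobolev embedding $L^2\hookrightarrow L^{2^*}=L^{q'}$), followed by an additional H\"older step in time \eqref{3.14}; the outcome is a product $\|\nabla u_\varepsilon\|_{L^q(Q_{2r})}^{1/2}\|\nabla u_\varepsilon\|_{L^2(Q_{4r})}^{3/2}$, and Young with exponents $(4,4/3)$ produces exactly the stated $\delta^{-3}$. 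You instead extract the supremum to the power $2-q$ and apply a single Sobolev--Poincar\'e $\|u_\varepsilon(\cdot,t)\|_{L^2(B_{2r})}\le C\|\nabla u_\varepsilon(\cdot,t)\|_{L^q(B_{2r})}$, which is scale-invariant precisely because $q^*=2$; this removes the need for the time H\"older altogether and yields the tighter exponent $\delta^{-(2-q)/q}=\delta^{-2/d}$ (for $d\ge3$), which the paper's looser $\delta^{-3}$ comfortably absorbs. Your remark on the uniform Poincar\'e constant is on point and matches the paper's justification at the end of Section~\ref{m1.1}.
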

\begin{proof}
According to \eqref{3.3}, we have
\begin{equation}\label{3.12}\begin{aligned}
\int_{Q_{2r}^\varepsilon} \left| u_\varepsilon  \right|^2&
\leq  \left(\sup_{t\in I_{2r}} \int_{\Omega^\varepsilon_{2r}} \left| u_\varepsilon(x,t)  \right|^2 \,dx \right)^{\frac12} \int_{I_{2r}} \left( \int_{\Omega^\varepsilon_{2r}} \left| u_\varepsilon(x,t)  \right|^2 \,dx\right)^{\frac12} \,dt
\\[5pt] &
\leq C  \left\|  \nabla u_\varepsilon \right\|_{L^2(Q_{4r}^\varepsilon)}
\int_{I_{2r}} \left( \int_{\Omega^\varepsilon_{2r}} \left| u_\varepsilon(x,t)  \right|^2 \,dx\right)^{\frac12}dt.
\end{aligned}\end{equation}
Let $q'$ be the H\"older conjugate exponent to $q$, i.e., $q' = 2^*=\frac{2d}{d-2}$ for $d>2$ and $q'<\infty$ for $d=2$. Due to the H\"older and Sobolev-Poincar\'e inequalities, there holds
\begin{equation}\label{3.13}\begin{aligned}
&\int_{I_{2r}} \left( \int_{\Omega^\varepsilon_{2r}} \left| u_\varepsilon(x,t)  \right|^2 \,dx\right)^{\frac12} dt
= \int_{I_{2r}} \left( \int_{ B_{2r}} \left| u_\varepsilon(x,t)  \right|^2 \,dx\right)^{\frac12} dt &
\\
\leq& \int_{I_{2r}} \left( \int_{ B_{2r}} \left| u_\varepsilon(x,t)\right|^q \,dx \right)^{\frac{1}{2q}} \left(\int_{ B_{2r} } \left| u_\varepsilon(x,t)\right|^{q'}\,dx \right)^{\frac{1}{2q'}}\,dt
\\
\leq& C r^{1 + d\left( \frac14-\frac1{2q}\right) }
\int_{I_{2r}}
\left( \int_{ B_{2r}} \left| \nabla u_\varepsilon(x,t)\right|^q \,dx \right)^{\frac{1}{2q}}  \left(\int_{ B_{2r} } \left| \nabla u_\varepsilon(x,t)\right|^{2}\,dx \right)^{\frac{1}{4}}\,dt
\\
\leq& C r^{1 + d\left( \frac14-\frac1{2q}\right) }
\left\| \nabla u_\varepsilon \right\|_{L^q(Q_{2r})}^{\frac12} \left( \int_{I_{2r}} \left( \int_{ B_{2r}} \left| \nabla u_\varepsilon(x,t) \right|^2 \,dx \right)^{\frac{(2q)'}{4}} \,dt  \right)^{\frac{1}{(2q)'}},
\end{aligned}\end{equation}
with $\frac1{2q}+\frac1{(2q)'}=1$.

Due to H\"older's inequality in time after noting $\frac{(2q)'}{4}=\frac q{2(2q-1)}<\frac12$, there holds

\begin{equation}\label{3.14}\begin{aligned}
\left( \int_{I_{2r}} \left( \int_{ B_{2r}} \left| \nabla u_\varepsilon(x,t) \right|^2 \,dx \right)^{\frac{(2q)'}{4}} \,dt  \right)^{\frac{1}{(2q)'}}
\leq
Cr^{\frac{2}{(2q)'} -\frac12} \left(    \int_{I_{2r}}  \int_{ B_{2r}}  \left| \nabla u_\varepsilon (x,t) \right|^2  \,dx \,dt  \right)^{\frac14}.
\end{aligned}\end{equation}

Denote $\kappa:=d\left(\frac14-\frac1{2q}\right) + \frac{2}{(2q)'} +\frac12=(d+2)\left(\frac14-\frac1{2q}\right)+2$. Combining \eqref{3.12}-\eqref{3.14} yields that
\begin{equation*}
\left\| u_\varepsilon \right\|_{L^2({Q_{2r}})}^2
\leq Cr^{\kappa}
 \left\| \nabla u_\varepsilon \right\|_{L^q({Q_{2r}})}^{\frac12} \left\|  \nabla u_\varepsilon \right\|_{L^2(Q_{4r})} ^{\frac32}.
\end{equation*}

Normalizing the norms and using \eqref{3.2}, we have
\begin{equation*}\begin{aligned}
\fint_{Q_r}|\nabla u_\varepsilon|^2\leq C \left(\fint_{Q_{2r}}|\nabla u_\varepsilon|^q\right)^{1/(2q)}\left(\fint_{Q_{4r}}|\nabla u_\varepsilon|^2\right)^{3/4}.
\end{aligned}\end{equation*}

\noindent
Consequently, due to Young's inequality, there holds, for every $\delta >0$,

\begin{equation*}\begin{aligned}
\left(\fint_{Q_r}|\nabla u_\varepsilon|^2\right)^{1/2}\leq  C\delta^{-3}\left(\fint_{Q_{2r}}|\nabla u_\varepsilon|^q\right)^{1/q}+\delta\left(\fint_{Q_{4r}}|\nabla u_\varepsilon|^2\right)^{1/2},
\end{aligned}\end{equation*}
which is the desired estimate \eqref{3.11}.
\end{proof}

After obtaining the reverse H\"older inequality, we are ready to state the following large-scale Meyers estimate for $u_\varepsilon$.
\begin{lemma}[Large-scale Meyers estimate]\label{l3.3}
Fix $r_1\in (\varepsilon^*,1)$, then for any $r\in (0,1/9)$, there exists a constant $p_0>2$, depending only on $\Lambda,m,d$,  such that
\begin{equation}\label{3.15}\begin{aligned}
\left(\fint_{Q_r}|\mathcal{M}_{r_1}^2[\nabla u_\varepsilon]|^{p_0}\right)^{1/p_0}\leq  C\left(\fint_{Q_{9r}}|\mathcal{M}_{r_1}^2[\nabla u_\varepsilon]|^2\right)^{1/2}.
\end{aligned}\end{equation}
\end{lemma}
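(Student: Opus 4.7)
The plan is the classical Gehring self-improvement scheme: first translate the mixed reverse Hölder inequality of Lemma \ref{l3.2} for $|\nabla u_\varepsilon|$ into the analogous inequality for $G:=\mathcal{M}_{r_1}^2[\nabla u_\varepsilon]$ via the averaging-operator identities of Lemma \ref{l2.1}, and then apply the classical Giaquinta--Modica--Gehring lemma to upgrade $L^2$ integrability to $L^{p_0}$ for some $p_0>2$.

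For the transfer, fix $(x,t)\in Q_r$ and $\rho\ge r_1$ with $Q_{8\rho}(x,t)\subset Q_{9r}$. Lemma \ref{l2.1}(3) with $p=2$ and $r_1\le\rho$ gives $\int_{Q_\rho(x,t)}G^2\le C\int_{Q_{\rho+r_1}(x,t)}|\nabla u_\varepsilon|^2\le C(2\rho)^{d+2}\fint_{Q_{2\rho}(x,t)}|\nabla u_\varepsilon|^2$, so $(\fint_{Q_\rho(x,t)}G^2)^{1/2}\le C(\fint_{Q_{2\rho}(x,t)}|\nabla u_\varepsilon|^2)^{1/2}$. Since $2\rho\ge r_1>\varepsilon^*$, Lemma \ref{l3.2} is available at scale $2\rho$; on the right-hand side I return from $|\nabla u_\varepsilon|$ to $G$ by combining $\mathcal{M}_{r_1}^q[\nabla u_\varepsilon]\le G$ (Lemma \ref{l2.1}(1), since $q<2$) with the left inequality of Lemma \ref{l2.1}(3). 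The result is
\[
\Bigl(\fint_{Q_\rho(x,t)}G^2\Bigr)^{1/2}
\le C\delta^{-3}\Bigl(\fint_{Q_{4\rho}(x,t)}G^q\Bigr)^{1/q}
+C\delta\Bigl(\fint_{Q_{8\rho}(x,t)}G^2\Bigr)^{1/2},
\]
and squaring (using $(a+b)^2\le 2a^2+2b^2$) yields the Giaquinta--Modica form
\[
\fint_{Q_\rho(x,t)}G^2
\le C_0\delta^{-6}\Bigl(\fint_{Q_{4\rho}(x,t)}G^q\Bigr)^{2/q}
+C_0\delta^2\fint_{Q_{8\rho}(x,t)}G^2.
\]

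At sub-$r_1$ scales, i.e.\ $\rho<r_1$, the same inequality is trivial with no error term needed: by Lemma \ref{l2.1}(5) one has $G(y,s)\le C\fint_{Q_\rho(y,s)}G$ for every $(y,s)\in Q_\rho(x,t)$, and $Q_\rho(y,s)\subset Q_{2\rho}(x,t)$ then gives $G(y,s)\le C\fint_{Q_{2\rho}(x,t)}G$; Jensen's inequality (as $q\ge 1$) produces $(\fint_{Q_\rho(x,t)}G^2)^{1/2}\le C(\fint_{Q_{2\rho}(x,t)}G^q)^{1/q}$. Combining the two regimes, the Giaquinta--Modica inequality above holds uniformly for every $\rho$ with $Q_{8\rho}(x,t)\subset Q_{9r}$. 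I now choose $\delta$ so small that $C_0\delta^2$ lies below the self-improvement threshold of the classical Giaquinta--Modica--Gehring lemma (for parabolic cylinders, compare the presentation in \cite{MR3812862}); the lemma then delivers some $p_0>2$ and $C$, depending only on $d,\Lambda,m$, such that $(\fint_{Q_r}G^{p_0})^{1/p_0}\le C(\fint_{Q_{8r}}G^2)^{1/2}\le C(\fint_{Q_{9r}}G^2)^{1/2}$, where the final step only enlarges the cylinder at the cost of a dimensional constant.

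The main obstacle is the radius bookkeeping in the transfer step: Lemma \ref{l3.2} is only available at scales $\ge\varepsilon^*$, which forces $\rho\ge r_1$ there and makes the sub-$r_1$ regime genuinely separate, and after squaring one must ensure that the absorption parameter ends up in front of an $L^2$ term at a radius ($\le 8\rho$) still contained in $Q_{9r}$. Once this transfer is accomplished, the Gehring self-improvement is entirely standard.
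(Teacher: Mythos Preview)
Your argument is correct and follows essentially the same route as the paper: at scales $\rho\ge r_1$ you transfer the reverse H\"older inequality of Lemma \ref{l3.2} to $G=\mathcal{M}_{r_1}^2[\nabla u_\varepsilon]$ using \eqref{2.1} and \eqref{2.3}, at scales $\rho<r_1$ you use \eqref{2.5} to get an $L^\infty$ bound that makes the reverse H\"older inequality trivial, and then you invoke the parabolic Gehring lemma with $\delta$ chosen small. The only differences from the paper's proof are cosmetic radius choices (you work with $4\rho,8\rho$ where the paper absorbs everything into $9r$).
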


\begin{proof}
We first assume $r\ge r_1$. Then by \eqref{2.1}, \eqref{2.3} and \eqref{3.11}, a direct computation shows that
		\begin{equation}\label{3.16}
			\begin{aligned}
				\bigg( \fint_{Q_{r}} |\mathcal{M}_{r_1}^2 [\nabla u_\varepsilon] |^{2} \bigg)^{1/2}
				&\le C \bigg( \fint_{Q_{2r}} | \nabla u_\varepsilon|^2  \bigg)^{1/2} \\[5pt]
				& \le C\delta \bigg( \fint_{Q_{8r}} |\nabla u_\varepsilon|^2 \bigg)^{1/2} + C\delta^{-3}\bigg( \fint_{Q_{4r}} |\nabla u_\varepsilon|^{q} \bigg)^{1/q}  \\[5pt]
				& \le C \delta \bigg( \fint_{Q_{9r}} |\mathcal{M}_{r_1}^2[ \nabla u_\varepsilon]|^2 \bigg)^{1/2}+ C\delta^{-3}\bigg( \fint_{Q_{9r}} | \mathcal{M}_{r_1}^{2} [\nabla u_\varepsilon]|^{q}  \bigg)^{1/q},
			\end{aligned}
		\end{equation}
where the constant $2>q>1$ is given in Lemma \ref{l3.2} and $\delta\in (0,1)$ is arbitraty.

		For $0<r<r_1$, it follows from \eqref{2.5} that
		\begin{equation*}
			\|  \mathcal{M}_{r_1}^2 [\nabla u_\varepsilon] \|_{L^\infty(Q_r)} \le C\fint_{Q_{4r}} \mathcal{M}_{r_1}^2 [\nabla u_\varepsilon].
		\end{equation*}
		The two inequalities above imply that a weaker reverse H\"{o}lder inequality holds for all scales $r\in (0,1)$ with $Q_{9r+r_1} \subset Q_1$. By the Gehring's inequality \cite[Lemma B.5]{MR3812862} of parabolic version after choosing $\delta$ sufficiently small, there exists some $\sigma > 0$ such that for all $r\in (0,1)$ with $Q_{9r+r_1} \subset Q_1$,
		\begin{equation*}
			\bigg( \fint_{Q_{r}} |\mathcal{M}_{r_1}^2 [\nabla u_\varepsilon] |^{p_0} \bigg)^{1/p_0} \le C\bigg( \fint_{Q_{9r}} |\mathcal{M}_{r_1}^2[ \nabla u_\varepsilon]|^2 \bigg)^{1/2},
		\end{equation*}
	which is the desired estimate \eqref{3.15}.

\end{proof}

To proceed, for $\varepsilon \leq r $, we denote
\begin{equation}\label{3.17}\begin{aligned}
T^{\varepsilon,+}_r=:B_r \cap \{x\in\mathbb{R}^d:x\cdot n_r<r \zeta(r,\varepsilon/r)\},\\
\partial T^{\varepsilon,+}_r=:B_r \cap \{x\in\mathbb{R}^d:x\cdot n_r=r \zeta(r,\varepsilon/r)\}\end{aligned}\end{equation}
and
\begin{equation}\label{3.18}\begin{aligned}
T^{\varepsilon,-}_r=:B_r \cap \{x\in\mathbb{R}^d:x\cdot n_r<-r \zeta(r,\varepsilon/r)\},\\
\partial T^{\varepsilon,-}_r=:B_r \cap \{x\in\mathbb{R}^d:x\cdot n_r=-r \zeta(r,\varepsilon/r)\},\end{aligned}\end{equation}
where $n_r\in \mathbb{S}^{d-1}$ is a unit ``outer normal" vector such that $T^{\varepsilon,-}_r\subset \Omega_r^\varepsilon\subset T^{\varepsilon,+}_r$ with $|T^{\varepsilon,+}_r/T^{\varepsilon,-}_r|\leq C_d r^d\zeta(r,\varepsilon/r)=C_d \zeta(r,\varepsilon/r)|\Omega_r^\varepsilon|$, given in Definition \ref{d1.1}.

Note that $T^{\varepsilon,\pm}_r$ are Lipschitz bounded domains with the Lipschitz character independent of $\varepsilon$ and $r$.

Now, we introduce the following large-scale regularity, which plays an important role in obtaining the excess estimates in Section \ref{l4}.
\begin{lemma}\label{l3.4}
Let $\varepsilon^*=\varepsilon\zeta(\varepsilon,1)$ and $r_2,r_1\in (\varepsilon^*,1/9)$ with $r_2\geq r_1$.
Suppose that $u_\varepsilon\in L^2(-81r_2^2,0; H^1(\Omega^\varepsilon_{9r_2}))$  satisfies the equation \eqref{1.1}. Now let $w_\varepsilon=w_\varepsilon^{9r_2}$ (we will drop the superscript $9r_2$ for simplicity, if there is no ambiguity) be the weak solution of
\begin{equation} \label{3.19}
\left\{\begin{aligned}
\partial_t w_\varepsilon - \operatorname{div}\left(A^\varepsilon \nabla w_\varepsilon \right)= 0& \quad \text{ in } T^{\varepsilon,+}_{9r_2}\times I_{9r_2},\\
w_\varepsilon=u_\varepsilon& \quad\text{ on } \partial _p \left(T^{\varepsilon,+}_{9r_2}\times I_{9r_2}\right).\\
\end{aligned}\right.
\end{equation}
Then there exists a constant $p_1>2$, depending only on $m,d,\Lambda$, such that
\begin{equation}\label{3.20}
			\bigg( \fint_{T^{\varepsilon,+}_{9r_2}\times I_{9r_2}} | \mathcal{M}_{r_1}^2 [\nabla w_\varepsilon]|^{p_1}  \bigg)^{1/p_1}
			 \leq C\bigg( \fint_{T^{\varepsilon,+}_{9r_2}\times I_{9r_2}} | \mathcal{M}_{r_1}^2 [\nabla u_\varepsilon]|^2  \bigg)^{1/2}.
	\end{equation}
\end{lemma}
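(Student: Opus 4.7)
The plan is to mimic the proof of Lemma \ref{l3.3}: establish a local reverse H\"older inequality for $\mathcal{M}_{r_1}^2[\nabla w_\varepsilon]$ and invoke the parabolic Gehring lemma. The new feature compared with Lemma \ref{l3.3} is the spherical part of $\partial T^{\varepsilon,+}_{9r_2}$ (namely $\partial B_{9r_2}\cap\overline{T^{\varepsilon,+}_{9r_2}}$), on which $w_\varepsilon$ equals the non-vanishing boundary value $u_\varepsilon$; the upper flat face is easier because $w_\varepsilon=\tilde u_\varepsilon=0$ there (that face lies in $B_{9r_2}\setminus\Omega^\varepsilon$). First, I would derive the global $L^2$ energy bound
\begin{equation*}
\|\nabla w_\varepsilon\|_{L^2(T^{\varepsilon,+}_{9r_2} \times I_{9r_2})} \le C \|\nabla \tilde{u}_\varepsilon\|_{L^2(T^{\varepsilon,+}_{9r_2} \times I_{9r_2})}
\end{equation*}
by testing the equation for $v_\varepsilon := w_\varepsilon - \tilde u_\varepsilon$ against itself (it vanishes on the parabolic boundary), using hypothesis \eqref{1.6} to give meaning to $\partial_t \tilde u_\varepsilon$ in $L^2(I_{9r_2}; H^{-1}(T^{\varepsilon,+}_{9r_2}))$.

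Next I would establish a local reverse H\"older inequality for $\mathcal{M}_{r_1}^2[\nabla w_\varepsilon]$ at every $(x_0,t_0) \in T^{\varepsilon,+}_{9r_2} \times I_{9r_2}$ and every admissible scale $\rho$. If $Q_{9\rho}(x_0,t_0)\cap\partial B_{9r_2}=\emptyset$, then the ball is either strictly interior or touches only the upper face where $w_\varepsilon=0$, and the proof of Lemma \ref{l3.2} (combined with the averaging bounds in Lemma \ref{l2.1}) goes through verbatim for $w_\varepsilon$. If instead $Q_{9\rho}(x_0,t_0)\cap\partial B_{9r_2}\neq\emptyset$, I would apply the Caccioppoli scheme to $v_\varepsilon$, which vanishes on $\partial T^{\varepsilon,+}_{9r_2}$; the equation $\partial_t v_\varepsilon - \operatorname{div}(A^\varepsilon\nabla v_\varepsilon)=-\partial_t\tilde u_\varepsilon+\operatorname{div}(A^\varepsilon\nabla\tilde u_\varepsilon)$ produces, via \eqref{1.6}, an additive perturbation driven by $\fint|\nabla\tilde u_\varepsilon|^2$. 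In both cases one obtains (writing $Q_\rho^+:=Q_\rho(x_0,t_0)\cap(T^{\varepsilon,+}_{9r_2}\times I_{9r_2})$)
\begin{equation*}
\bigg(\fint_{Q_\rho^+} |\mathcal{M}_{r_1}^2[\nabla w_\varepsilon]|^2\bigg)^{1/2} \le C\delta \bigg(\fint_{Q_{9\rho}^+} |\mathcal{M}_{r_1}^2[\nabla w_\varepsilon]|^2\bigg)^{1/2} + C_\delta\bigg(\fint_{Q_{9\rho}^+} |\mathcal{M}_{r_1}^2[\nabla w_\varepsilon]|^q\bigg)^{1/q} + C\bigg(\fint_{Q_{9\rho}^+} |\mathcal{M}_{r_1}^2[\nabla\tilde u_\varepsilon]|^2\bigg)^{1/2},
\end{equation*}
with the last term absent in the first case, while the small-scale regime $\rho<r_1$ is handled via \eqref{2.5} exactly as in Lemma \ref{l3.3}.

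Finally, I would invoke a parabolic Gehring-type lemma with perturbation (cf.\ \cite[Lemma B.5]{MR3812862}) to upgrade the left exponent from $2$ to some $p_1>2$, then bound the self-term in $L^2$ via the energy estimate above, and bound the perturbation term in $L^{p_1}$ (after choosing $p_1\le p_0$) by Lemma \ref{l3.3} applied to $u_\varepsilon$, thereby arriving at \eqref{3.20}. The main obstacle is the spherical-boundary case above: making the Caccioppoli inequality for $v_\varepsilon$ rigorous requires controlling $\partial_t \tilde u_\varepsilon$ in duality against an $H^1_0$ test function, uniformly in $\varepsilon$, which is exactly what hypothesis \eqref{1.6} is designed to furnish.
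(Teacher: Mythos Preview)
Your proposal is correct and follows essentially the same route as the paper: derive a reverse H\"older inequality with perturbation for $\mathcal{M}_{r_1}^2[\nabla(w_\varepsilon-u_\varepsilon)]$, apply the parabolic Gehring lemma, and then control the perturbation term in $L^{p_1}$ via Lemma~\ref{l3.3} (with $p_1\le p_0$). The only cosmetic difference is that the paper, rather than splitting into the interior/flat-face case versus the spherical-boundary case, extends $w_\varepsilon$ by $u_\varepsilon$ across $\partial_p(T^{\varepsilon,+}_{9r_2}\times I_{9r_2})$ and then invokes the ready-made boundary reverse H\"older inequality \cite[Lemma~B.7]{MR3812862} for $w_\varepsilon-u_\varepsilon$ uniformly; your case analysis is simply what that citation unpacks to.
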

\begin{proof}We extend $w_\varepsilon$ to be $u_\varepsilon$ across the parabolic boundary $\partial _p \left(T^{\varepsilon,+}_{9r_2}\times I_{9r_2}\right)$.
First, according to \eqref{1.6}, we know that
\begin{equation}\label{3.21}\begin{aligned}
u_\varepsilon\in L^2(I_{9r_2};H^{1}(B_{9r_2}))&,\quad \partial_tu_\varepsilon\in L^2(I_{9r_2};H^{-1}(B_{T^{\varepsilon,+}_{9r_2}})),\\[5pt]
\mathcal{M}_{r_1}^2[\nabla u_\varepsilon]\in L^{2}(Q_{9r_2}),\quad & ||\partial_tu_\varepsilon||_{L^2(I_{9r_2};H^{-1}(T^{\varepsilon,+}_{9r_2}))}\leq C||\nabla u_\varepsilon||_{L^2(T^{\varepsilon,+}_{9r_2}\times I_{9r_2})}.
\end{aligned}\end{equation}

Recall that the extension of $u_\varepsilon$ across the rough boundary, then, basic energy estimates yield that

\begin{equation}\label{3.22}
\int_{Q_{9r_2}}|\nabla w_\varepsilon|^2=\int_{{T^{\varepsilon,+}_{9r_2}\times I_{9r_2}}}|\nabla w_\varepsilon|^2 \leq C \int_{T^{\varepsilon,+}_{9r_2}\times I_{9r_2}}|\nabla u_\varepsilon|^2=C\int_{Q_{9r_2}}|\nabla u_\varepsilon|^2.
\end{equation}

To proceed, for any $r\geq r_1$ and any $\alpha>0$, by the reverse H\"{o}lder inequality of parabolic version \cite[Lemma B.7]{MR3812862} for the non-oscillating boundary, we have
\begin{equation}\label{3.23}\begin{aligned}
\left(\fint_{Q_r}|\nabla w_\varepsilon-\nabla u_\varepsilon|^2\right)^{1/2}\leq &\ \frac C\delta\left(\fint_{Q_{4r}}|\nabla w_\varepsilon-\nabla u_\varepsilon|^q\right)^{1/q}+\delta\left(\fint_{Q_{4r}}|\nabla w_\varepsilon-\nabla u_\varepsilon|^2\right)^{1/2}\\
&\quad+ \delta\left(\fint_{Q_{4r}}|\nabla u_\varepsilon|^2\right)^{1/2},
\end{aligned}\end{equation}
with the same $q<2$ defined in Lemma \ref{l3.2}. Now, as the same computation in \eqref{3.16}, for any $r\geq r_1$, we have
\begin{equation}\label{3.24}
			\begin{aligned}
				\bigg( \fint_{Q_{r}} |\mathcal{M}_{r_1}^2 [\nabla w_\varepsilon-\nabla u_\varepsilon] |^{2}& \bigg)^{1/2}
				 \le {C}{ \delta} \bigg( \fint_{Q_{9r}} |\mathcal{M}_{r_1}^2[ \nabla w_\varepsilon-\nabla u_\varepsilon]|^2 \bigg)^{1/2}\\
+\frac C\delta&\bigg( \fint_{Q_{9r}} | \mathcal{M}_{r_1}^{2} [\nabla w_\varepsilon-\nabla u_\varepsilon]|^{q}  \bigg)^{1/q}+\delta \bigg( \fint_{Q_{9r}} |\mathcal{M}_{r_1}^2[\nabla u_\varepsilon]|^2 \bigg)^{1/2}.
			\end{aligned}
		\end{equation}
Meanwhile, for $r\leq r_1$, it follows from \eqref{2.5} that
		\begin{equation*}
			\|  \mathcal{M}_{r_1}^2 [\nabla w_\varepsilon-\nabla u_\varepsilon] \|_{L^\infty(Q_r)} \le C\fint_{Q_{4r}} \mathcal{M}_{r_1}^2 [\nabla w_\varepsilon-\nabla u_\varepsilon].
		\end{equation*}
Therefore, according to the two inequality above and the Gehring's inequality \cite[Lemma B.5]{MR3812862} of parabolic version after choosing $\delta$ sufficiently small, there exists $ p_0\geq p_1>2$, such that
\begin{equation}\label{3.25}
			\begin{aligned}
&\bigg( \fint_{Q_{r}} |\mathcal{M}_{r_1}^2 [\nabla w_\varepsilon-\nabla u_\varepsilon] |^{p_1} \bigg)^{1/p_1}\\
\le& C\bigg( \fint_{Q_{9r}} |\mathcal{M}_{r_1}^2[ \nabla w_\varepsilon-\nabla u_\varepsilon]|^2 \bigg)^{1/2}+
C \bigg( \fint_{Q_{9r}} |\mathcal{M}_{r_1}^2[\nabla u_\varepsilon]|^{p_1} \bigg)^{1/p_1}\\
\leq& C\bigg( \fint_{Q_{9r}} |\mathcal{M}_{r_1}^2[ \nabla w_\varepsilon-\nabla u_\varepsilon]|^2 \bigg)^{1/2}+
C \bigg( \fint_{Q_{81r}} |\mathcal{M}_{r_1}^2[\nabla u_\varepsilon]|^{2} \bigg)^{1/2},
			\end{aligned}
		\end{equation}
where we have used the Lemma \ref{l3.3} in the inequality. Note that the inequality above implies that we have obtained desired estimates near the rough boundary with $r\leq r_2/9$.

To proceed, using the global version of the reverse H\"older inequality for the non-oscillating  domain \cite[Lemma B.7]{MR3812862}, the interior version of the reverse H\"older inequality \cite[Lemma B.4]{MR3812862} and the idea of the computation in \eqref{3.16} yields the desired estimate \eqref{3.20} after a combination of the estimate \eqref{3.25}.
\end{proof}

Now, for the $w_\varepsilon$ defined in \eqref{3.19}, we have the following excess decay estimate.

\begin{lemma}\label{l3.5}
For every $r_2\in (\varepsilon,1/10)$, there holds
\begin{equation}\label{3.27}
\left(\fint_{Q_{r_2}}|\nabla u_\varepsilon-\nabla w_\varepsilon^{r_2}|^2\right)^{1/2}\leq C\varsigma (r_2,\varepsilon/r_2)^{\gamma}\left(\fint_{Q_{10r_2}}|\nabla u_\varepsilon|^2\right)^{1/2},
\end{equation}
where $\gamma=1/2-1/{p_1}$ with $p_1>2$ defined in Lemma \ref{l3.4}, and $w_\varepsilon^{r_2}$ satisfies the equation \eqref{3.19} with $9r_2$ replaced by $r_2$.
\end{lemma}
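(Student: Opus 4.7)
The plan is to establish the absolute bound
\[
\|\nabla v\|_{L^2(T^{\varepsilon,+}_{r_2}\times I_{r_2})} \le C\,\varsigma(r_2,\varepsilon/r_2)^{1/2-1/p_1} \,\|\nabla u_\varepsilon\|_{L^2(Q_{10r_2})}
\]
for $v:=w_\varepsilon^{r_2}-\tilde{u}_\varepsilon$, so that \eqref{3.27} follows after normalising by $|Q_{r_2}|\simeq r_2^{d+2}$. By the boundary condition in \eqref{3.19}, $v$ belongs to $L^2(I_{r_2};H^1_0(T^{\varepsilon,+}_{r_2}))$ with $v(\cdot,-r_2^2)=0$ and is an admissible test function for the equation of $w_\varepsilon^{r_2}$. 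Testing, and invoking hypothesis \eqref{1.6} to interpret $\langle\partial_t\tilde{u}_\varepsilon,\cdot\rangle$ as a bounded functional on $L^2(I_{r_2};H^1_0(T^{\varepsilon,+}_{r_2}))$, yields
\[
\tfrac12\|v(\cdot,0)\|_{L^2}^2+\int A^\varepsilon\nabla v\cdot\nabla v=-\langle\partial_t\tilde{u}_\varepsilon,v\rangle-\int_{\Omega^\varepsilon_{r_2}\times I_{r_2}} A^\varepsilon\nabla u_\varepsilon\cdot\nabla v.
\]

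The principal obstacle is that $v$ is \emph{not} admissible for the equation of $u_\varepsilon$, since $v$ need not vanish on $\partial\Omega^\varepsilon\cap B_{r_2}$. Introduce the Lipschitz cut-off
\[
\xi(x):=\min\bigl\{1,\,(r_2\varsigma(r_2,\varepsilon/r_2))^{-1}\mathrm{dist}(x,\partial\Omega^\varepsilon)\bigr\},
\]
so that $\xi=0$ on $\partial\Omega^\varepsilon$, $\xi\equiv 1$ off an $r_2\varsigma$-neighbourhood of $\partial\Omega^\varepsilon$, and $|\nabla\xi|\le C(r_2\varsigma(r_2,\varepsilon/r_2))^{-1}$. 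Then $v\xi\in L^2(I_{r_2};H^1_0(\Omega^\varepsilon_{r_2}))$ with $(v\xi)(\cdot,-r_2^2)=0$ is admissible for $u_\varepsilon$; subtracting the resulting identity and using the consistency $\langle\partial_t\tilde{u}_\varepsilon,v\xi\rangle=\langle\partial_t u_\varepsilon,v\xi\rangle$ (valid because $\partial_t\tilde{u}_\varepsilon$ vanishes as a distribution on $T^{\varepsilon,+}_{r_2}\setminus\overline{\Omega^\varepsilon}$) rewrites the identity as
\[
\tfrac12\|v(\cdot,0)\|_{L^2}^2+\int A^\varepsilon\nabla v\cdot\nabla v=-\langle\partial_t\tilde{u}_\varepsilon,v(1-\xi)\rangle+\int A^\varepsilon\nabla u_\varepsilon\cdot\bigl(v\,\nabla\xi-(1-\xi)\nabla v\bigr),
\]
and the right-hand side is now concentrated in the $r_2\varsigma$-neighbourhood of $\partial\Omega^\varepsilon$.

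To close the estimate, apply hypothesis \eqref{1.6} on the Lipschitz enlargement $\Omega^\sharp:=B_{r_2}\cap\{x:-2r_2\varsigma(r_2,\varepsilon/r_2)<x\cdot n_{r_2}<r_2\varsigma(r_2,\varepsilon/r_2)\}$, whose Lipschitz character is uniform in $\varepsilon$, which contains the supports of $v(1-\xi)$ and $\nabla\xi$, and which satisfies $|\Omega^\sharp\cap\Omega^\varepsilon|\le Cr_2^d\varsigma(r_2,\varepsilon/r_2)$. Combining with H\"older's inequality and the reverse H\"older inequality for $|\nabla u_\varepsilon|$ implied by Lemma \ref{l3.3} (using $p_1\le p_0$ from Lemma \ref{l3.4} together with \eqref{2.3}) yields
\[
\|\nabla u_\varepsilon\|_{L^2((\Omega^\sharp\cap\Omega^\varepsilon)\times I_{r_2})}+\|\partial_t\tilde{u}_\varepsilon\|_{L^2(I_{r_2};H^{-1}(\Omega^\sharp))}\le C\,\varsigma(r_2,\varepsilon/r_2)^{1/2-1/p_1}\,\|\nabla u_\varepsilon\|_{L^2(Q_{10r_2})}.
\]
A one-dimensional Poincar\'e inequality in the slab $\Omega^\sharp$, using $v=0$ on the outer face $\{x\cdot n_{r_2}=r_2\varsigma(r_2,\varepsilon/r_2)\}\cap B_{r_2}$, gives $\|v\nabla\xi\|_{L^2(\Omega^\sharp\times I_{r_2})}\le C\|\nabla v\|_{L^2(\Omega^\sharp\times I_{r_2})}$ and $\|\nabla(v(1-\xi))\|_{L^2(\Omega^\sharp\times I_{r_2})}\le C\|\nabla v\|_{L^2(\Omega^\sharp\times I_{r_2})}$. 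Cauchy--Schwarz, ellipticity, and absorption of $\|\nabla v\|_{L^2(\Omega^\sharp\times I_{r_2})}$ into $\|\nabla v\|_{L^2(T^{\varepsilon,+}_{r_2}\times I_{r_2})}$ complete the energy bound. The decisive technical step is the construction of $\Omega^\sharp$ as a Lipschitz set of thickness $\sim r_2\varsigma$: this localises \eqref{1.6} to the boundary layer, ensuring only a thin-slab norm of $\nabla u_\varepsilon$ appears and producing the sharp rate $\varsigma^{1/2-1/p_1}$.
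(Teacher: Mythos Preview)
Your proposal is correct and follows essentially the same route as the paper: test the equation for $w_\varepsilon^{r_2}$ with $v=w_\varepsilon^{r_2}-u_\varepsilon$, correct with a cut-off so that the remaining test function is admissible for $u_\varepsilon$, localise the right-hand side to a slab of thickness $O(r_2\varsigma)$, invoke hypothesis \eqref{1.6} and the slab Poincar\'e inequality, and close with the large-scale Meyers estimate of Lemma~\ref{l3.3} via H\"older. The only cosmetic difference is that the paper takes its cut-off $\phi$ to depend solely on $x\cdot n_{r_2}$ (so $\phi\equiv 1$ on $T^{\varepsilon,+}_{r_2}\setminus T^{\varepsilon,-}_{r_2}$ and $\phi\equiv 0$ for $x\cdot n_{r_2}<-4r_2\varsigma$), which makes the containment of the support in the flat slab $P_{r_2}$ immediate, whereas your cut-off $\xi$ built from $\mathrm{dist}(x,\partial\Omega^\varepsilon)$ requires a small auxiliary argument (using the flatness at a slightly larger scale and Lemma~\ref{l4.1}) to ensure $\mathrm{supp}(1-\xi)\cap T^{\varepsilon,+}_{r_2}$ lies in a slab of the stated width; with that adjustment the two proofs are the same.
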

\begin{proof}
One can find the similar proof for the elliptic case in \cite{MR4199276}.
First, by \eqref{3.20} and \eqref{3.22}, we know that $w_\varepsilon^{r_2}-u_\varepsilon\in L^2(I_{r_2};H^1_0(T^{\varepsilon,+}_{r_2}))$. Then, testing the equation \eqref{3.19} by $w_\varepsilon^{r_2}-u_\varepsilon$ and integrating the resulting equation over $T^{\varepsilon,+}_{r_2}\times I_{r_2}$ yields that

\begin{equation}\label{3.28}
\int_{T^{\varepsilon,+}_{r_2}\times I_{r_2}}\partial_t w_\varepsilon^{r_2}\cdot (w_\varepsilon^{r_2}-u_\varepsilon)+\int_{T^{\varepsilon,+}_{r_2}\times I_{r_2}}A^\varepsilon \nabla w_\varepsilon^{r_2} \cdot \nabla(w_\varepsilon^{r_2}-u_\varepsilon)=0.
\end{equation}
Let $\phi=\phi(x)\in C^\infty_0(\mathbb{R}^d)$ be a smooth function such that $\phi=1$ on
$T^{\varepsilon,+}_{r_2}\setminus \ \Omega^\varepsilon_{r_2}$. Then
$(1-\phi)(w_\varepsilon^{r_2}-u_\varepsilon)\in L^2(I_{r_2};H^1_0(\Omega^\varepsilon_{r_2}))$.
Since $u_\varepsilon$ is a weak solution to the parabolic problem \eqref{1.1} in $Q_{r_2}^\varepsilon$, we have
\begin{equation*}
\int_{Q_{r_2}^\varepsilon}\partial_t u_\varepsilon\cdot (1-\phi)(w_\varepsilon^{r_2}-u_\varepsilon)+\int_{Q_{r_2}^\varepsilon}A^\varepsilon \nabla u_\varepsilon \cdot \nabla[(1-\phi)(w_\varepsilon^{r_2}-u_\varepsilon)]=0,
\end{equation*}
which is equivalent to
\begin{equation}\label{3.29}
\int_{T^{\varepsilon,+}_{r_2}\times I_{r_2}}\partial_t u_\varepsilon\cdot (1-\phi)(w_\varepsilon^{r_2}-u_\varepsilon)+\int_{T^{\varepsilon,+}_{r_2}\times I_{r_2}}A^\varepsilon \nabla u_\varepsilon \cdot \nabla[(1-\phi)(w_\varepsilon^{r_2}-u_\varepsilon)]=0,
\end{equation}
due to $\nabla u_\varepsilon=0$, $1-\phi=0$ for $(x,t)\in (T^{\varepsilon,+}_{r_2}\setminus \Omega^\varepsilon)\times I_{r_2}$ and $\partial_t u_\varepsilon\in L^2(I_{r_2};H^{-1}(\Omega^\varepsilon_{r_2})\cap H^{-1}(T^{\varepsilon,+}_{r_2}))$.
Combining \eqref{3.28}-\eqref{3.29} yields that
\begin{equation}\label{3.30}\begin{aligned}
\int_{T^{\varepsilon,+}_{r_2}\times I_{r_2}}\partial_t (w_\varepsilon^{r_2}-u_\varepsilon)\cdot
(w_\varepsilon^{r_2}-u_\varepsilon)+\int_{T^{\varepsilon,+}_{r_2}\times I_{r_2}}A^\varepsilon \nabla
(w_\varepsilon^{r_2}-u_\varepsilon) \cdot \nabla(w_\varepsilon^{r_2}-u_\varepsilon)&\\
=-\int_{T^{\varepsilon,+}_{r_2}\times I_{r_2}}\partial_t u_\varepsilon\cdot \phi
(w_\varepsilon^{r_2}-u_\varepsilon)-\int_{T^{\varepsilon,+}_{r_2}\times I_{r_2}}A^\varepsilon \nabla u_\varepsilon \cdot \nabla[\phi(w_\varepsilon^{r_2}-u_\varepsilon)]&.
\end{aligned}\end{equation}

To proceed, we need only to choose a suitable test function $\phi$. In view of
$T^{\varepsilon,+}_{r_2}\setminus \Omega_{r_2}^\varepsilon\subset T^{\varepsilon,+}_{r_2}\setminus
T^{\varepsilon,-}_{r_2}$ and the definitions of $T^{\varepsilon,+}_{r_2}$ and
$T^{\varepsilon,-}_{r_2}$ in \eqref{3.17}-\eqref{3.18}, we may choose $\phi$ such that $\phi=1$ on
$T^{\varepsilon,+}_{r_2}\setminus T^{\varepsilon,-}_{r_2}$ and $\phi=0$ in
$T^{\varepsilon,+}_{r_2}\cap \{x\in \mathbb{R}^d:x\cdot n_{r_2}<-4r_2\zeta(r_2,\varepsilon/r_2)\}$. Moreover, $|\nabla \phi|\leq C(r_2\varsigma(r_2,\varepsilon/r_2))^{-1}$.

Denote the set $T^{\varepsilon,+}_{r_2}\cap \{x\in \mathbb{R}^d:x\cdot n_{r_2}>-4r_2\zeta(r_2,\varepsilon/r_2)\}$ by $P_{r_2}$. Note that $P_{r_2}$ is a lamina-like region whose radius is $r_2$ and thickness is $5r_2\varsigma(r_2,\varepsilon/r_2)$. Thus $|P_{r_2}|\leq Cr_2^d\varsigma(r_2,\varepsilon/r_2)$. Moreover, note that $\text{supp}\left(\phi(w_\varepsilon^{r_2}-u_\varepsilon)\right)\subset P_{r_2}\times I_{r_2}$.

Now, turn back to the equality \eqref{3.30}. By the ellipticity condition \eqref{1.9}, we have
\begin{equation}\label{3.31}
\begin{aligned}
&\frac 1 2\int_{T^{\varepsilon,+}_{r_2}\times \{0\}}|\nabla w_\varepsilon^{r_2}-\nabla u_\varepsilon|^2+
\Lambda\int_{T^{\varepsilon,+}_{r_2}\times I_{r_2}}|\nabla w_\varepsilon^{r_2}-\nabla u_\varepsilon|^2\\[5pt]
\leq &||\partial_t u_\varepsilon||_{L^2(I_{r_2};H^{-1}(P_{r_2}))}\cdot \left(\int_{P_{r_2}\times I_{r_2}}|\nabla(\phi(w_\varepsilon^{r_2}-u_\varepsilon))|^2\right)^{1/2}\\
&\quad\quad\quad\quad\quad+\Lambda \left(\int_{P_{r_2}\times I_{r_2}}|\nabla u_\varepsilon|^2\right)^{1/2}\left(\int_{P_{r_2}\times I_{r_2}}|\nabla\left(\phi
(w_\varepsilon^{r_2}-u_\varepsilon)\right)|^2\right)^{1/2}\\[5pt]
\leq& C\left(\int_{P_{r_2}\times I_{r_2}}|\nabla u_\varepsilon|^2\right)^{1/2}\left(\int_{P_{r_2}\times I_{r_2}}|\nabla
(w_\varepsilon^{r_2}-u_\varepsilon)|^2\right)^{1/2},\\[5pt]
\end{aligned}\end{equation}
where in the inequality above, we have used \eqref{1.6} and the following Poincar\'e inequality
\begin{equation*}
\int_{P_{r_2}}|\nabla\phi\cdot\left(w_\varepsilon^{r_2}-u_\varepsilon\right)|^2
\leq C(r_2\varsigma(r_2,\varepsilon/r_2))^{-2}\int_{P_{r_2}}|w_\varepsilon^{r_2}-u_\varepsilon|^2
\leq C\int_{P_{r_2}}|\nabla(w_\varepsilon^{r_2}-u_\varepsilon)|^2.
\end{equation*}

\noindent
Let $r_2^*=r_2\varsigma(r_2,\varepsilon/r_2)\geq \varepsilon^*=\varepsilon\zeta(\varepsilon,1)$, then according to Fubini's theorem and \eqref{3.15}, we have
\begin{equation}\label{3.32}\begin{aligned}
\left(\frac{1}{|T^{\varepsilon,+}_{r_2}|}\fint_{I_{r_2}}\int_{P_{r_2}}|\nabla u_\varepsilon|^2\right)^{1/2}
\leq &\ C\left(\frac{1}{|T^{\varepsilon,+}_{r_2}|}\fint_{I_{r_2}}\int_{P_{r_2}}\fint_{Q_{r_2^*}(x,t)}|\nabla u_\varepsilon(y,s)|^2dydsdxdt\right)^{1/2}\\[7pt]
=& \ C\left(\frac{1}{|T^{\varepsilon,+}_{r_2}|}\fint_{I_{r_2}}\int_{P_{r_2}}\mathcal{M}_{r_2^*}^2[\nabla u_\varepsilon]^2\right)^{1/2}\\[7pt]
\leq& \ C\left(\frac{|P_{r_2}|}{|T^{\varepsilon,+}_{r_2}|}\right)^{1/2-1/p_1}
\left(\fint_{T^{\varepsilon,+}_{r_2}\times I_{r_2}}\mathcal{M}_{r_2^*}^2[\nabla u_\varepsilon]^{p_1}\right)^{1/p_1}\\[5pt]
\leq &\ C\varsigma(r_2,\varepsilon/r_2)^\gamma\left(\fint_{T^{\varepsilon,+}_{9r_2}\times I_{9r_2}}\mathcal{M}_{r_2^*}^2[\nabla u_\varepsilon]^{2}\right)^{1/2}\\[5pt]
\leq &\ C\varsigma(r_2,\varepsilon/r_2)^\gamma \left(\fint_{T^{\varepsilon,+}_{10r_2}\times I_{10r_2}}|\nabla u_\varepsilon|^{2}\right)^{1/2}.\\[5pt]
\end{aligned}\end{equation}

Now the desired inequality \eqref{3.27} follows readily from \eqref{3.31}-\eqref{3.32}.
\end{proof}

\section{Large-scale Lipschitz estimate}\label{l4}
\noindent
Let $u_\varepsilon \in L^2(-4,0;H^1(\Omega^\varepsilon_2))$ be a weak solution of
\begin{equation}\label{4.1}
\left\{
\begin{aligned}
\partial_tu_\varepsilon-\nabla\cdot (A^\varepsilon \nabla u_\varepsilon) &= 0 \qquad &\text{in } &Q^\varepsilon_{2}, \\
u_\varepsilon  &=0  \qquad &\text{on } & \Delta^\varepsilon_{2}.
\end{aligned}
\right.
\end{equation}
Now, we define two quantities $\Phi$ and $H$ as follows: for any $v\in L^2(-r^2,0;H^1(\Omega^\varepsilon_r))$,
\begin{equation}\label{4.2}
\Phi(r;v)  =:\frac{1}{r} \bigg( \fint_{Q^\varepsilon_r } |v|^2 \bigg)^{1/2}
\end{equation}
and
\begin{equation}\label{4.3}
H(r;v) = :\frac{1}{r}\inf_{k\in\mathbb{R}^d} \bigg( \fint_{Q^\varepsilon_{r} } |v - (n_r\cdot x)k|^2 \bigg)^{1/2}.
\end{equation}

\noindent For simplicity, denote $\Phi(r) = \Phi(r;u_\varepsilon), H(r) = H(r;u_\varepsilon)$. It follows from the Poincar\'{e} inequality and Caccioppoli inequalities \eqref{3.2} that the large-scale Lipschitz estimate \eqref{1.11} is equivalent to the estimate of $\Phi(r)$ for $r\in (\varepsilon,1)$.

To proceed,
we introduce some basic estimates of $\Phi$ and $H$. First of all, for any $r>0$, it is easy to check 
\begin{equation}\label{4.4}
H(r) \le \Phi(r).
\end{equation}
Moreover, using the $\varepsilon$-scale flatness of $Q^\varepsilon$ in Definition \ref{d1.1}, for any $r\in(\varepsilon,1)$, a direct computation yields that
\begin{equation}\label{4.5}
\sup_{r\le s\le 2r}\Phi(s) \le C\Phi(2r),
\end{equation}
for any $r\in (\varepsilon,1)$.

The outer normal $n_r$ of the flat boundary of $T^{\varepsilon,\pm}_r$ defined in Definition \ref{d1.1} will play an important role in obtaining the excess estimates. And the following lemma shows that $n_r$ changes gently associated with $r\in (\varepsilon,1)$, whose proof can be found in \cite[Lemma 3.1]{MR4199276}.

\begin{lemma}\label{l4.1}
Let $\varepsilon\leq r_1\leq r_2\leq 1$, then there exists a constant $C$, independent of $r_1$, $r_2$ and $\varepsilon$, such that
$$|n_{r_1}-n_{r_2}|\leq C\frac{r_2\varsigma(r_2,\varepsilon/r_2)}{r_1}.$$
\end{lemma}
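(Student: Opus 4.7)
The plan is to translate the two-scale flatness inclusions from Definition~\ref{d1.1} into pointwise constraints on $n_{r_2}$ relative to $n_{r_1}$. Set $\delta_i = r_i\varsigma(r_i,\varepsilon/r_i)$ for $i=1,2$; since $r\varsigma(r,\varepsilon/r)$ is nondecreasing (as noted after Theorem~\ref{t1.4}) and $r_1\le r_2$, one has $\delta_1\le \delta_2$, together with the standing bound $\delta_i\le r_i/4$. Applying \eqref{1.3} at scales $r_1$ and $r_2$, both based at $y=0\in \partial\Omega^\varepsilon$, and intersecting with $B_{r_1}\subset B_{r_2}$, I obtain the set containments: any $x\in B_{r_1}$ with $x\cdot n_{r_1}<-\delta_1$ lies in $\Omega^\varepsilon$ and hence satisfies $x\cdot n_{r_2}<\delta_2$; symmetrically, any $x\in B_{r_1}$ with $x\cdot n_{r_1}>\delta_1$ lies outside $\Omega^\varepsilon$ and hence satisfies $x\cdot n_{r_2}>-\delta_2$.

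Next I would extract control of $n_{r_2}^{\perp}:= n_{r_2}-(n_{r_1}\cdot n_{r_2})n_{r_1}$, the component of $n_{r_2}$ perpendicular to $n_{r_1}$. For any unit vector $v\perp n_{r_1}$ and $\tau>0$ small, the test points $x^{\pm}_v = -(\delta_1+\tau)\,n_{r_1}\pm\alpha v$ with $\alpha=\sqrt{r_1^2/2-(\delta_1+\tau)^2}\asymp r_1$ (legal since $\delta_1\le r_1/4$) satisfy $|x^{\pm}_v|^2=r_1^2/2<r_1^2$ and $x^{\pm}_v\cdot n_{r_1}=-(\delta_1+\tau)<-\delta_1$. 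The first containment then gives $-(\delta_1+\tau)(n_{r_1}\cdot n_{r_2})\pm\alpha(v\cdot n_{r_2})<\delta_2$; sending $\tau\to 0^+$ and rearranging yields
\begin{equation*}
|v\cdot n_{r_2}| \le \frac{\delta_2+\delta_1|n_{r_1}\cdot n_{r_2}|}{\alpha} \le \frac{2\delta_2}{\alpha} \le \frac{C\delta_2}{r_1}.
\end{equation*}
Choosing $v = n_{r_2}^{\perp}/|n_{r_2}^{\perp}|$ gives $|n_{r_2}^{\perp}|\le C\delta_2/r_1$.

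It remains to fix the orientation, i.e., to rule out $n_{r_1}\cdot n_{r_2}$ being close to $-1$. Testing the first containment at $x=-\tfrac{r_1}{2}n_{r_1}\in B_{r_1}$, which satisfies $x\cdot n_{r_1}=-r_1/2<-\delta_1$, yields $-\tfrac{r_1}{2}(n_{r_1}\cdot n_{r_2})<\delta_2$, so $n_{r_1}\cdot n_{r_2}>-2\delta_2/r_1$. Combined with $(n_{r_1}\cdot n_{r_2})^2 = 1-|n_{r_2}^{\perp}|^2\ge 1-C^2\delta_2^2/r_1^2$, these two inequalities force $n_{r_1}\cdot n_{r_2}\ge 1-C\delta_2^2/r_1^2$ whenever $\delta_2/r_1$ is below a universal threshold, and then
\begin{equation*}
|n_{r_1}-n_{r_2}|^2 = 2\bigl(1-n_{r_1}\cdot n_{r_2}\bigr)\le C\delta_2^2/r_1^2 = C\bigl(r_2\varsigma(r_2,\varepsilon/r_2)/r_1\bigr)^2,
\end{equation*}
the desired estimate; above the threshold, the trivial bound $|n_{r_1}-n_{r_2}|\le 2$ is absorbed into a larger constant. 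I expect this orientation step to be the only real subtlety: the two outward normals are specified independently at each scale, so nothing a priori rules out antipodal alignment, and the deep interior test point is precisely what makes the antipodal case quantitatively impossible.
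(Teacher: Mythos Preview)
Your argument is correct. The paper itself does not prove Lemma~\ref{l4.1}; it simply refers to \cite[Lemma~3.1]{MR4199276}, so there is no in-paper proof to compare against line by line. Your proof is a clean, self-contained version of the natural geometric argument: intersect the two slab inclusions from Definition~\ref{d1.1} inside $B_{r_1}$, probe with points of the form $-(\delta_1+\tau)n_{r_1}\pm\alpha v$ to bound the tangential part $n_{r_2}^\perp$, and then use one deep interior point to pin down the sign of $n_{r_1}\cdot n_{r_2}$. Each step checks out, including the use of the standing assumptions $\delta_1\le\delta_2$ (monotonicity of $r\varsigma(r,\varepsilon/r)$) and $\delta_i\le r_i/4$ recorded after Theorem~\ref{t1.4}, and your threshold dichotomy at the end handles the large-$\delta_2/r_1$ regime correctly via the trivial bound $|n_{r_1}-n_{r_2}|\le 2$. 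This is presumably the same mechanism underlying Zhuge's original proof, and your write-up would serve well as a replacement for the bare citation.
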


\noindent
Moreover, for the quantities $\Phi$ and $H$, the following properties hold:

\begin{lemma}\label{l4.2}
	There exists a function $h:(0,2) \mapsto [0,\infty)$ such that for any $r\in (\varepsilon,1)$,
	\begin{equation}\label{4.6}
	\left\{
	\begin{aligned}
	h(r) & \le C(H(r) + \Phi(r)) \\[5pt]
	\Phi(r) &\le H(r) + h(r) \\[5pt]
	\sup_{r\leq r_1,r_2\leq  2r} |h(s_1) - h(s_2)| &\le CH(2r) + C\zeta(2r,\varepsilon/2r)\Phi(2r),
	\end{aligned}\right.
	\end{equation}
where the constant $C$ does not dependent on $r,r_1,r_2$ or $\varepsilon$.
\end{lemma}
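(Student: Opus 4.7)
The natural candidate is $h(r) := |k_r|$, where $k_r$ is the (unique) minimizer of $k \mapsto \fint_{Q^\varepsilon_r}|u_\varepsilon - (n_r\cdot x)k|^2$ in the definition \eqref{4.3} of $H(r)$; both the uniqueness and the first two lines of \eqref{4.6} rest on the elementary two-sided estimate
\begin{equation*}
c\, r^2 \le \fint_{Q^\varepsilon_r}|n_r \cdot x|^2 \le r^2,
\end{equation*}
whose lower bound follows because $\Omega^\varepsilon_r \supset T^{\varepsilon,-}_r \supset \{x\in B_r : x\cdot n_r < -r/2\}$ (using $\varsigma \le 1/4$), on which cap $|n_r\cdot x|\ge r/2$. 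Splitting $u_\varepsilon = [u_\varepsilon - (n_r\cdot x)k_r] + (n_r\cdot x)k_r$, averaging over $Q^\varepsilon_r$, and applying the triangle inequality in the two obvious directions then yields $\Phi(r) \le H(r) + h(r)$ and $h(r)\le C(H(r)+\Phi(r))$.

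For the oscillation estimate I fix $r \le r_1 \le r_2 \le 2r$ and proceed in three steps. First, testing the infimum for $H(s)$, $s\in[r,2r]$, with the surrogate slope $k_{2r}$ in direction $n_s$, and invoking the triangle inequality, Lemma~\ref{l4.1}, and the already-established bound $|k_{2r}|\le C(H(2r)+\Phi(2r))$, yields the preliminary control
\begin{equation*}
H(s) \le C H(2r) + C\varsigma(2r,\varepsilon/2r)\Phi(2r),\qquad s\in[r,2r].
\end{equation*}
Second, since $Q^\varepsilon_{r_1}\subset Q^\varepsilon_{r_2}$ with comparable measures and $r_2/r_1\le 2$, the triangle inequality together with this control gives
\begin{equation*}
\frac{1}{r_1}\Bigl(\fint_{Q^\varepsilon_{r_1}} |(n_{r_1}\cdot x)k_{r_1} - (n_{r_2}\cdot x)k_{r_2}|^2\Bigr)^{1/2} \le C H(2r) + C\varsigma(2r,\varepsilon/2r)\Phi(2r).
\end{equation*}
Third, the algebraic decomposition
\begin{equation*}
(n_{r_1}\cdot x)k_{r_1} - (n_{r_2}\cdot x)k_{r_2} = ((n_{r_1}-n_{r_2})\cdot x)\,k_{r_1} + (n_{r_2}\cdot x)(k_{r_1}-k_{r_2})
\end{equation*}
lets me absorb the first summand, whose size is at most $|n_{r_1}-n_{r_2}||k_{r_1}|\le C\varsigma(2r,\varepsilon/2r)(H(2r)+\Phi(2r))$ by Lemma~\ref{l4.1} and the bound on $h$, and then to extract the second summand via the non-degeneracy $\fint_{Q^\varepsilon_{r_1}}|n_{r_2}\cdot x|^2 \ge c r_1^2$. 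The outcome $|k_{r_1}-k_{r_2}| \le C H(2r) + C\varsigma(2r,\varepsilon/2r)\Phi(2r)$ combined with $||k_{r_1}|-|k_{r_2}|| \le |k_{r_1}-k_{r_2}|$ delivers the third line of \eqref{4.6}.

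The main obstacle is precisely this non-degeneracy in the third step: one has to guarantee $\fint_{Q^\varepsilon_{r_1}}|n_{r_2}\cdot x|^2 \gtrsim r_1^2$ with the ``wrong'' normal $n_{r_2}\ne n_{r_1}$ on a rough set $\Omega^\varepsilon_{r_1}$. The quantitative $\varepsilon$-flatness is essential here: combining $\varsigma\le 1/4$ with Lemma~\ref{l4.1} forces $|n_{r_1}-n_{r_2}|$ to be small enough that on the cap $\{x\in B_{r_1}: x\cdot n_{r_1} < -r_1/2\}\subset\Omega^\varepsilon_{r_1}$ one still retains $|n_{r_2}\cdot x|\ge r_1/4$, so both normals see a slab of genuine thickness $\sim r_1$ inside $\Omega^\varepsilon_{r_1}$, and the lower bound survives. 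This is the only place where the detailed geometric content of Definition~\ref{d1.1} is used in the argument.
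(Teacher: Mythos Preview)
Your proposal is correct and follows essentially the same approach as the paper: define $h(r)=|k_r|$ for the minimizing vector, use the non-degeneracy $\fint_{Q^\varepsilon_r}|n\cdot x|^2\gtrsim r^2$ together with Lemma~\ref{l4.1} and triangle inequalities. The only cosmetic difference is that the paper fixes the single reference normal $n_{2r}$ on the smallest domain $Q^\varepsilon_r$, which makes the non-degeneracy immediate via $B_r\cap\{x\cdot n_{2r}<-r/2\}\subset T^{\varepsilon,-}_{2r}\cap B_r\subset\Omega^\varepsilon_r$; your justification through the $n_{r_1}$-cap and Lemma~\ref{l4.1} tacitly needs the Lemma~\ref{l4.1} constant to cooperate with $\varsigma\le 1/4$, whereas using the $n_{r_2}$-cap directly (as above, with $r_2$ in place of $2r$) avoids that detour.
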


\begin{proof} The proof is totally similar to \cite[Lemma 3.2]{MR4199276} for the elliptic case and we provide it for completeness.

	Let $k_r$ be the vector that minimizes $H(r)$, then
	\begin{equation}\label{4.7}
	H(r) = \frac{1}{r} \bigg( \fint_{Q^\varepsilon_{r} } |u_\varepsilon - (n_r\cdot x)k_r|^2 \bigg)^{1/2}.
	\end{equation}
	Now define $h(r) = |k_r|$, we will show that $h(r)$ is the desired function. To see the first inequality of \eqref{4.6}, using the $\varepsilon$-scale flatness of $Q^\varepsilon_r$ in Definition \ref{d1.1} for $r>\varepsilon$, we have
	\begin{equation*}
	\bigg( \fint_{Q_r^\varepsilon} |n_r\cdot x|^2 \bigg)^{1/2} \ge cr,
	\end{equation*}
	for some absolute constant $c>0$. Therefore, a direct computation shows that
	\begin{equation*}
	\begin{aligned}
	h(r) & \le |k_r| \frac{C}{r}\bigg( \fint_{Q_r^\varepsilon} |n_r\cdot x|^2 \bigg)^{1/2} = \frac{C}{r}\bigg( \fint_{Q_r^\varepsilon} |(n_r\cdot x)k_r|^2 \bigg)^{1/2} \\[5pt]
	& \le \frac{C}{r} \bigg( \fint_{Q^\varepsilon_{r} } |u_\varepsilon - (n_r\cdot x)k_r|^2 \bigg)^{1/2} + \frac{C}{r} \bigg( \fint_{Q^\varepsilon_{r} } |u_\varepsilon|^2 \bigg)^{1/2},
	\end{aligned}
	\end{equation*}
	which completes the proof of the first inequality of \eqref{4.6}. Moreover, the second inequality of \eqref{4.6} follows easily by the triangle inequality.
	
	Now, we need to prove the third inequality of \eqref{4.6}. Due to the first inequality of \eqref{4.6} and \eqref{4.4}, we have $h(r) \le C\Phi(r)$. Due to $r\in (\varepsilon,1)$, the $\varepsilon$-scale flatness condition in Definition \ref{d1.1} implies that $|Q^\varepsilon_r|\simeq r^{d+2}$ and $|n_{2r}\cdot x| \geq Cr>0$ in a subset of $\Omega^\varepsilon_r$ with volume comparable to $r^d$.
	Thus, for any $r_1,r_2\in [r,2r]$, a direct computation yields that
	\begin{equation}\label{4.8}
	\begin{aligned}
	|k_{s_1} - k_{s_2}| &\le |k_{s_1} - k_{s_2}|\cdot\frac{C}{r} \bigg( \fint_{Q^\varepsilon_{r} } |n_{2r}\cdot x|^2 \bigg)^{1/2}\le \frac{C}{r} \bigg( \fint_{Q^\varepsilon_{r} } |(n_{2r}\cdot x)\left(k_{s_1} - k_{s_2}\right)|^2 \bigg)^{1/2} \\
	& \le \frac{C}{r} \bigg( \fint_{Q^\varepsilon_{r} } |u_\varepsilon - (n_{2r}\cdot x)k_{s_1})|^2 \bigg)^{1/2} + \frac{C}{r} \bigg( \fint_{Q^\varepsilon_{r} } |u_\varepsilon - (n_{2r}\cdot x)k_{s_2})|^2 \bigg)^{1/2}.
	\end{aligned}
	\end{equation}
For the first term in \eqref{4.8}, according to Lemma \ref{l4.1}, $h(r)\leq C \Phi(r)$, \eqref{4.5} and the triangle inequality, we have
	\begin{equation*}
	\begin{aligned}
	& \frac{1}{r} \bigg( \fint_{Q^\varepsilon_{r} } |u_\varepsilon - (n_{2r}\cdot x)k_{s_1}|^2 \bigg)^{1/2} \\
	&\le \frac{C}{r} \bigg( \fint_{Q^\varepsilon_{s_1} } |u_\varepsilon - (n_{s_1}\cdot x)k_{s_1}|^2 \bigg)^{1/2} + |n_{2r}-n_{s_1}||k_{s_1}| \\[5pt]
	& \le \frac{C}{s_1} \inf_{k\in \mathbb{R}^d}\bigg( \fint_{Q^\varepsilon_{s_1} } |u_\varepsilon - (n_{s_1}\cdot x)k|^2 \bigg)^{1/2} + C\zeta(2r,\varepsilon/(2r)) \Phi(2r) \\[5pt]
	& \le \frac{C}{s_1}  \bigg( \fint_{Q^\varepsilon_{s_1} } |u_\varepsilon - (n_{s_1}\cdot x)k_{2r}|^2 \bigg)^{1/2} + C\zeta(2r,\varepsilon/(2r)) \Phi(2r) \\[5pt]
	& \le \frac{C}{2r} \bigg( \fint_{Q^\varepsilon_{2r} } |u_\varepsilon - (n_{2r}\cdot x)q_{2r}|^2 \bigg)^{1/2} + |n_{2r} - n_{s_1}| |k_{2r}| + C\zeta(2r,\varepsilon/2r) \Phi(2r) \\[5pt]
	& \le CH(2r) + C\zeta(2r,\varepsilon/(2r)) \Phi(2r).\\[5pt]
	\end{aligned}
	\end{equation*}
In fact, the estimate for the second term of \eqref{4.8} is similar as in the computation above. Therefore, for any $s_1,s_2\in [r,2r]$, there holds
	\begin{equation*}
	|h(s_1) - h(s_2)| \le |k_{s_1} -k_{s_2}| \le CH(2r) + C\zeta(2r,\varepsilon/(2r)) \Phi(2r),
	\end{equation*}
	which completes the proof of the third inequality of \eqref{4.6}.
\end{proof}

Next, we introduce the following approximation result. Note that, in Theorem \ref{t4.3}, the operator $\partial_t+\mathcal{L}_0$ denotes the homogenized operator for $\partial_t+\mathcal{L}_\varepsilon$ with $0<\varepsilon<1$ \cite{MR2839402}.
\begin{theorem}\label{t4.3}
Let $\varepsilon^*=\varepsilon\zeta(\varepsilon,1)$, $r_1=2\varepsilon^*\leq \varepsilon/2$ and $r_2\in (9\varepsilon^*,1/9)$. Suppose that $u_\varepsilon\in L^2(-4r_2^2,0; H^1(\Omega^\varepsilon_{2r_2}))$  satisfies the parabolic problem \eqref{4.1}, then there exists a function $u_0$ satisfying $(\partial_t+\mathcal{L}_0)u_0=0$ in $T^{\varepsilon,+}_{r_2}\times I_{r_2}$ with $u_0=0$ on $\partial T^{\varepsilon,+}_{r_2}\times [-r_2^2,0]$, such that
\begin{equation}\label{4.9}
\left(\fint_{T^{\varepsilon,+}_{r_2}\times I_{r_2}}|u_\varepsilon-u_0|^2\right)^{1/2}\leq Cr_2 \left(\frac{\varepsilon}{r_2}\right)^{\sigma}\left(\fint_{Q^\varepsilon_{10r_2}}|\nabla u_\varepsilon|^2\right)^{1/2},
\end{equation}
where $\sigma\in(0,\gamma]$ with $\gamma>0$ defined in Lemma \ref{l3.5} and the constant $C>0$ depends only on $\Lambda$, $d$ and $m$.\end{theorem}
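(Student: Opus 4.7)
The plan is to execute the two-stage approximation strategy announced in the abstract. In the first stage, I would invoke Lemma \ref{l3.5} with radius $r_2$ to produce $w_\varepsilon := w_\varepsilon^{r_2}$, which satisfies the original oscillating parabolic system on the smooth slab $T^{\varepsilon,+}_{r_2}\times I_{r_2}$ with parabolic boundary data inherited from $u_\varepsilon$. Because $u_\varepsilon - w_\varepsilon$ vanishes on the parabolic boundary and $T^{\varepsilon,+}_{r_2}$ has Lipschitz character independent of $\varepsilon$ and $r_2$, the parabolic Poincar\'e inequality combined with Lemma \ref{l3.5} yields
\begin{equation*}
\left(\fint_{T^{\varepsilon,+}_{r_2}\times I_{r_2}}|u_\varepsilon-w_\varepsilon|^2\right)^{1/2}\leq C r_2\,\varsigma(r_2,\varepsilon/r_2)^{\gamma}\left(\fint_{Q^\varepsilon_{10r_2}}|\nabla u_\varepsilon|^2\right)^{1/2}.
\end{equation*}

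In the second stage, I would let $u_0$ solve the homogenized system $(\partial_t+\mathcal{L}_0)u_0=0$ on $T^{\varepsilon,+}_{r_2}\times I_{r_2}$ with parabolic data matched to $w_\varepsilon$. Note that on the flat face of $\partial T^{\varepsilon,+}_{r_2}$, which lies in $B_{r_2}\setminus\Omega^\varepsilon$, the zero-extended $u_\varepsilon$ (and hence $w_\varepsilon$) vanishes, which is consistent with the stated condition $u_0=0$ on $\partial T^{\varepsilon,+}_{r_2}\times[-r_2^2,0]$. Rescaling $x\mapsto r_2 x$, $t\mapsto r_2^2 t$ converts this into a periodic parabolic homogenization problem at scale $\varepsilon/r_2$ on a unit-scale domain whose Lipschitz character is uniform in $\varepsilon$. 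Applying the classical $L^2$ convergence rate for periodic parabolic homogenization on Lipschitz domains (of order $(\varepsilon/r_2)^{1/2}$, as in \cite{MR4072212}), then scaling back and controlling the right-hand side by the energy bound $\|\nabla w_\varepsilon\|_{L^2(T^{\varepsilon,+}_{r_2}\times I_{r_2})}\leq C\|\nabla u_\varepsilon\|_{L^2(Q^\varepsilon_{10r_2})}$ from \eqref{3.22}, I would obtain
\begin{equation*}
\left(\fint_{T^{\varepsilon,+}_{r_2}\times I_{r_2}}|w_\varepsilon-u_0|^2\right)^{1/2}\leq C r_2\,(\varepsilon/r_2)^{\sigma}\left(\fint_{Q^\varepsilon_{10r_2}}|\nabla u_\varepsilon|^2\right)^{1/2}.
\end{equation*}
Adding the two bounds via the triangle inequality and absorbing $\varsigma(r_2,\varepsilon/r_2)^{\gamma}$ into $(\varepsilon/r_2)^{\sigma}$ (which is allowed since $\sigma\leq\gamma<1/2$, $\varepsilon/r_2\leq 1$, $\varsigma\leq 1$, and in each of the three canonical cases listed after Definition \ref{d1.2} one has $\varsigma(r_2,\varepsilon/r_2)\leq C(\varepsilon/r_2)^{\sigma/\gamma}$ in the regime $\varepsilon<r_2$ relevant here) produces the estimate \eqref{4.9}.

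The hard part is the second stage. The boundary data of $w_\varepsilon$ on the spherical portion of $\partial T^{\varepsilon,+}_{r_2}$ lies only in $H^{1/2}$ and is genuinely $\varepsilon$-dependent, so the textbook convergence-rate arguments that invoke $H^2$-regularity of $u_0$ do not apply off-the-shelf. The remedy is either to introduce the boundary-layer corrector associated to the flat face $\partial T^{\varepsilon,+}_{r_2}\cap\{x\cdot n_{r_2}=r_2\varsigma(r_2,\varepsilon/r_2)\}$ (noting that the normal $n_{r_2}$ changes slowly with $r_2$ by Lemma \ref{l4.1}, so the corrector estimates are uniform), or to adopt the duality approach of \cite{MR4072212}, which trades the missing boundary smoothness for the reverse-H\"older integrability $|\nabla w_\varepsilon|\in L^{p_1}$ with $p_1>2$ furnished by Lemma \ref{l3.4}. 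Either route gives the stated $(\varepsilon/r_2)^{\sigma}$ rate after rescaling.
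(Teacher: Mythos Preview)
Your approach differs structurally from the paper's. The paper does \emph{not} pass through $w_\varepsilon$ at all in this proof; it defines $u_0$ directly as the solution of the homogenized problem on $T^{\varepsilon,+}_{r_2}\times I_{r_2}$ with parabolic boundary data equal to the zero-extended $u_\varepsilon$ (not $w_\varepsilon$), and then adapts the convergence-rate argument of \cite[Theorem~3.1 and Lemma~6.2]{MR4072212} to the pair $(u_\varepsilon,u_0)$. The key modification---which you correctly anticipate in your last paragraph---is that the usual pointwise Meyers estimate $\nabla u_0\in L^p$ is unavailable and must be replaced by the large-scale version $\mathcal{M}_{r_1}^2[\nabla u_0]\in L^{p_1}$ (obtained by the same argument as Lemma~\ref{l3.4} with $A^\varepsilon$ replaced by $\widehat{A}$). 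The paper then isolates the two boundary-layer integrals
\[
\int_{T^{\varepsilon,+}_1\times I_1\setminus T^{\varepsilon,+}_{1-10\varepsilon}\times I_{1-10\varepsilon}}|\nabla u_0|^2
\qquad\text{and}\qquad
\varepsilon^2\int_{T^{\varepsilon,+}_1\times I_1\setminus T^{\varepsilon,+}_{1-9\varepsilon}\times I_{1-9\varepsilon}}\bigl(|\nabla^2 u_0|^2+|\partial_t u_0|^2\bigr)
\]
and bounds each by $C\varepsilon^{1-2/p_1}\|\nabla u_\varepsilon\|_{L^2}^2$ via H\"older's inequality and interior regularity for the constant-coefficient operator. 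No separate $\varsigma$-term is generated by a first stage.

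There is a genuine gap in your final absorption step. You claim that $\varsigma(r_2,\varepsilon/r_2)^\gamma$ can be bounded by $C(\varepsilon/r_2)^\sigma$ by checking the three canonical cases, but the theorem is stated for a general $\sigma$-admissible modulus, and even in Case~1 your claim already fails: there $\varsigma(r_2,\varepsilon/r_2)=Cr_2^\alpha$ is independent of $\varepsilon$, so $\varsigma^\gamma$ stays bounded away from zero as $\varepsilon\to 0$ with $r_2$ fixed, while $(\varepsilon/r_2)^\sigma\to 0$. Your two-stage argument therefore delivers only the weaker bound $Cr_2\bigl((\varepsilon/r_2)^\sigma+\varsigma(r_2,\varepsilon/r_2)^\gamma\bigr)$ times the right-hand side; this would in fact suffice for the downstream Lemma~\ref{l4.4} (which carries exactly such a two-term error), but it does not prove Theorem~\ref{t4.3} as stated.
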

\begin{proof}
Actually, the proof of Theorem \ref{t4.3} is similar to \cite[Lemma 6.2]{MR4072212}, therefore, we need only point out the main differences. By dilation, we may assume that $r_2=1$.

To proceed, let $u_0$ be the weak solution of the following initial-Dirichlet problem
\begin{equation} \label{4.10}
\left\{\begin{aligned}
\partial_t u_0 - \operatorname{div}\left(\widehat{A} \nabla u_0 \right)& = 0 \quad \ \text{ in } T^{\varepsilon,+}_{1}\times I_{1},\\
u_0&=u_\varepsilon \quad\text{ on } \partial _p \left(T^{\varepsilon,+}_{1}\times I_{1}\right),\\
\end{aligned}\right.
\end{equation}
where $\partial _p \left(T^{\varepsilon,+}_{1}\times I_{1}\right)$ denotes the parabolic boundary of the cylinder $T^{\varepsilon,+}_{1}\times I_{1}$.

First, by the standard energy estimates for parabolic operators after in view of the assumption \eqref{1.6}, there holds
\begin{equation}\label{*}
\begin{aligned}
\int_{T^{\varepsilon,+}_{1}\times I_{1}}|\nabla u_0|^2 \leq& C\int_{T^{\varepsilon,+}_{1}\times I_{1}}|\nabla u_\varepsilon|^2+C||\partial_t u_\varepsilon||_{L^2(-16,0;H^{-1}(T^{\varepsilon,+}_{1}))}^2\\[5pt]
\leq & C\int_{T^{\varepsilon,+}_{1}\times I_{1}}|\nabla u_\varepsilon|^2.
\end{aligned}\end{equation}

Moreover, according to Lemma \ref{l3.4} and \eqref{2.3}, we have
\begin{equation}\label{4.11}
			\bigg( \fint_{T^{\varepsilon,+}_{1}\times I_{1}} | \mathcal{M}_{r_1}^2 [\nabla u_0]|^{p_1}  \bigg)^{1/p_1}
			 \leq C\bigg( \fint_{T^{\varepsilon,+}_{1}\times I_{1}} | \mathcal{M}_{r_1}^2 [\nabla u_\varepsilon]|^2  \bigg)^{1/2}\leq C\bigg( \fint_{Q_2^\varepsilon} | \nabla u_\varepsilon|^2  \bigg)^{1/2}.
	\end{equation}

To proceed, note that a key estimate (different from our case) in \cite[Lemma 6.2]{MR4072212} is that Gen and Shen used the fact $\nabla u_0\in L^p(Q_1)$ for some $p>2$ (i.e., the Meyers estimate) to bound the errors and obtain the excess decay estimates with the help of the so-called $\varepsilon$-smoothing methods and the so-called dual correctors. While in our case, due to the irregularity of the boundary, we can only use
$\mathcal{M}_{r_1}^2 [\nabla u_0]\in L^{p_1}(T^{\varepsilon,+}_{1}\times I_{1})$ to bound the excess decay estimate. Now, compared with \cite[Theorem 3.1 and Lemma 6.2]{MR4072212}, a careful detection shows that we need only to estimate the following two terms:
\begin{equation}\label{4.12}
\int_{T^{\varepsilon,+}_{1}\times I_{1}\backslash
T^{\varepsilon,+}_{1-10\varepsilon}\times
I_{1-10\varepsilon}}|\nabla u_0|^2\quad \text{ and }\quad \varepsilon^2 \int_{T^{\varepsilon,+}_{1}\times
I_{1}\backslash T^{\varepsilon,+}_{1-9\varepsilon}\times I_{1-9\varepsilon}}\left(|\nabla^2
u_0|^2+|\partial_t u_0|^2\right),
\end{equation}
by virtue of $\mathcal{M}_{r_1}^2 [\nabla u_0]\in L^{p_1}(T^{\varepsilon,+}_{1}\times I_{1})$.

For the first term in \eqref{4.12}, similar as in \eqref{3.32}, it follows from Fubini's theorem, \eqref{3.15}, \eqref{4.11} and H\"older's inequality that
\begin{equation}\label{4.13}
\begin{aligned}
\int_{T^{\varepsilon,+}_{1}\times I_{1}\backslash
T^{\varepsilon,+}_{1-10\varepsilon}\times I_{1-10\varepsilon}}|\nabla u_0|^2
\leq& C\int_{T^{\varepsilon,+}_{1}\times I_{1}\backslash
T^{\varepsilon,+}_{1-10\varepsilon}\times I_{1-10\varepsilon}}|\mathcal{M}_{r_1}^2 [\nabla u_0]|^2\\[5pt]
\leq& C\varepsilon^{1-\frac2{p_1}}\left( \int_{T^{\varepsilon,+}_{1}\times I_{1}} | \mathcal{M}_{r_1}^2 [\nabla u_0]|^{p_1}  \right)^{2/p_1}\\[5pt]
\leq& C\varepsilon^{1-\frac2{p_1}}  \int_{T^{\varepsilon,+}_{10}\times I_{10}} | \nabla u_\varepsilon|^2  .
\end{aligned}\end{equation}

For the second term in \eqref{4.12}, using the idea in \eqref{4.13} and the standard regularity estimates for parabolic systems with constant coefficients \cite{MR241822}, there holds
\begin{equation*}\begin{aligned}
&\varepsilon^2 \int_{T^{\varepsilon,+}_{1}\times
I_{1}\backslash T^{\varepsilon,+}_{1-9\varepsilon}\times I_{1-9\varepsilon}}\left(|\nabla^2
u_0|^2+|\partial_t u_0|^2\right)\\[5pt]
\leq&\  C\varepsilon^2\int_{T^{\varepsilon,+}_{1}\times
I_{1}\backslash T^{\varepsilon,+}_{1-8\varepsilon}\times I_{1-8\varepsilon}}\frac{|\nabla
u_0(y,s)|^2dyds}{|\text{dist}_p((y,s),\partial_p (T^{\varepsilon,+}_{1}\times
I_{1}))|^2}\\[5pt]
\leq&\  C\int_{T^{\varepsilon,+}_{1}\times
I_{1}\backslash T^{\varepsilon,+}_{1-8\varepsilon}\times I_{1-8\varepsilon}}|\nabla
u_0(y,s)|^2dyds\\[5pt]
\leq&\  C\varepsilon^{1-\frac2{p_1}}\left( \int_{T^{\varepsilon,+}_{1}\times I_{1}} | \mathcal{M}_{r_1}^2 [\nabla u_0]|^{p_1}  \right)^{2/p_1}\\[5pt]
\leq&\  C\varepsilon^{1-\frac2{p_1}}  \int_{T^{\varepsilon,+}_{10}\times I_{10}} | \nabla u_\varepsilon|^2  .
\end{aligned}\end{equation*}
Consequently, we complete the estimates of the terms in \eqref{4.12} and the desired estimates \eqref{4.9} can be obtained. For a more detailed proof, refer to \cite[Theorem 3.1 and Lemma 6.2]{MR4072212}.
\end{proof}

Similar as in \cite{MR4199276}, for $0<a\leq 1$, denote
 $$\tilde{H}(r,a,w_\varepsilon^{r})=\inf_{k\in \mathbb{R}^d}\frac1{ar}\left(\fint_{I_{ar}}\fint_{T^{\varepsilon,+}_r\cap B_{ar}}|w_\varepsilon^{r}-(n_r\cdot x)k|^2\right)^{1/2}.$$
where $w_\varepsilon^{r}$ satisfies
\begin{equation*}
\left\{\begin{aligned}
\partial_t w_\varepsilon^r - \operatorname{div}\left(A^\varepsilon \nabla w_\varepsilon^r \right)&= 0 \quad\quad\text{ in }T^{\varepsilon,+}_{r}\times I_{r},\\
w_\varepsilon^r&=u_\varepsilon \ \ \quad\text{ on }\partial _p \left(T^{\varepsilon,+}_{r}\times I_{r}\right).\\
\end{aligned}\right.
\end{equation*}

Note that we will drop the superscript $r$ for simplicity, if there is no ambiguity.
Now, using the smoothness of $u_0$ near the flat boundary $\partial T^{\varepsilon,+}_{r}\times I_{r}$, we can show that
\begin{lemma}\label{l4.4}
For any $r\in (3\varepsilon,1/10)$, there exists a constant $\theta\in (0,1)$ such that
$$
\tilde{H}(r,\theta,w_\varepsilon)\leq \frac 1 2 \tilde{H}(r,1,w_\varepsilon)+C\left((\varepsilon/r)^\sigma+\varsigma (r,\varepsilon/r)\right)\Phi(10r),
$$
with $\sigma>0$ given in Theorem \ref{t4.3}.\end{lemma}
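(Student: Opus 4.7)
The plan is to compare $w_\varepsilon$ on $T^{\varepsilon,+}_r \times I_r$ with the homogenized solution $u_0$ produced by Theorem \ref{t4.3}, and to exploit classical parabolic boundary regularity of $u_0$ near the flat face $\{x\cdot n_r = r\varsigma(r,\varepsilon/r)\}$ of $\partial T^{\varepsilon,+}_r$. Splitting
\[
\tilde H(r,\theta,w_\varepsilon) \leq \tilde H(r,\theta,w_\varepsilon - u_0) + \tilde H(r,\theta,u_0)
\]
and taking the trial vector $k=0$ in the first summand, I reduce it to $(\theta r)^{-1}$ times an $L^2$-norm of $w_\varepsilon - u_0 = (w_\varepsilon - u_\varepsilon) + (u_\varepsilon - u_0)$. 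Theorem \ref{t4.3} combined with the Caccioppoli inequality \eqref{3.2} bounds $\|u_\varepsilon - u_0\|_{L^2(T^{\varepsilon,+}_r \times I_r)}$ by $C r(\varepsilon/r)^\sigma \Phi(10r)$. For $w_\varepsilon - u_\varepsilon$, noting that this difference lies in $L^2(I_r; H^1_0(T^{\varepsilon,+}_r))$, I apply the spatial Poincar\'e inequality followed by Lemma \ref{l3.5} to obtain a bound of order $r\varsigma(r,\varepsilon/r)^\gamma \Phi(10r)$. The volume-ratio prefactor $C\theta^{-(d+2)/2}$ from passing to $B_{\theta r}$ is swallowed into $C$ once $\theta$ is fixed at the end.

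The core step is bounding $\tilde H(r,\theta,u_0)$. Let $k_*$ be a minimizer of $\tilde H(r,1,u_0)$ and set
\[
\hat u(x,t) := u_0(x,t) - (x\cdot n_r - r\varsigma)k_*,
\]
which still satisfies $(\partial_t + \mathcal L_0)\hat u = 0$ by constant-coefficient cancellation, and vanishes on the flat face $\{x\cdot n_r = r\varsigma\} \cap B_r$. After the translation $y = x - r\varsigma n_r$, classical parabolic boundary $C^{1,\alpha}$ estimates on a half-space (applied to the constant-coefficient system $\partial_t + \mathcal L_0$, as in \cite{MR241822}) yield a boundary derivative $\hat k := \partial_{n_r}\hat u(r\varsigma n_r, 0)$ such that
\[
\bigg(\fint_{I_{\theta r}}\fint_{T^{\varepsilon,+}_r \cap B_{\theta r}} |\hat u - (x\cdot n_r - r\varsigma)\hat k|^2\bigg)^{1/2} \leq C\theta^{1+\alpha}\,\|\hat u\|_{L^\infty(T^{\varepsilon,+}_{r/2}\times I_{r/2})}
\]
for any $\alpha\in(0,1)$ and any $\theta$ with $\varsigma(r,\varepsilon/r) \leq \theta \leq 1/2$. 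Choosing $k = k_*+\hat k$, the difference between $(x\cdot n_r - r\varsigma)(k_*+\hat k)$ and $(n_r\cdot x)(k_*+\hat k)$ is the constant $r\varsigma(k_*+\hat k)$, which after dividing by $\theta r$ contributes $\varsigma(r,\varepsilon/r)|k_*+\hat k|/\theta$. The parabolic De Giorgi--Nash--Moser $L^\infty$--$L^2$ bound applied to $\hat u$ gives $\|\hat u\|_{L^\infty} \leq Cr\tilde H(r,1,u_0) + Cr\varsigma(r,\varepsilon/r)|k_*|$, while explicit computation of the minimizer together with Theorem \ref{t4.3} and \eqref{4.5} shows that $|k_*|,|\hat k| \leq C\Phi(10r)$. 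Assembling everything,
\[
\tilde H(r,\theta,u_0) \leq C\theta^\alpha \tilde H(r,1,u_0) + C\,\frac{\varsigma(r,\varepsilon/r)}{\theta}\,\Phi(10r).
\]

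To close the loop, I upgrade $\tilde H(r,1,u_0)$ to $\tilde H(r,1,w_\varepsilon)$ by adding $r^{-1}\|w_\varepsilon - u_0\|_{L^2}$, already estimated above, and then fix $\theta \in (0,1/2)$ so small that $C\theta^\alpha \leq 1/2$; collecting all the error terms produces the claimed inequality. The main obstacle I foresee is precisely the offset of the flat boundary at $x\cdot n_r = r\varsigma(r,\varepsilon/r)$ rather than at $x\cdot n_r = 0$: the cancelling linear profile must be chosen as $(x\cdot n_r - r\varsigma)(k_*+\hat k)$ and the resulting constant $r\varsigma(k_*+\hat k)$ must be tracked carefully, since it is exactly what generates the $\varsigma(r,\varepsilon/r)\Phi(10r)$ contribution on the right-hand side. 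Every other ingredient is either already proved in the paper or a standard parabolic regularity statement for the constant-coefficient homogenized system.
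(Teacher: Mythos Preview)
Your proposal is correct and follows essentially the same route as the paper: approximate $w_\varepsilon$ by the homogenized solution $u_0$ from Theorem~\ref{t4.3}, use classical boundary regularity of $u_0$ along the flat face of $T^{\varepsilon,+}_r$ (the paper uses a $C^{1,1}$ bound and Taylor expansion at the nearest boundary point $x_r$, you use a $C^{1,\alpha}$ estimate for the shifted function $\hat u$), carefully track the offset $r\varsigma$ of that face from the origin to generate the $\varsigma(r,\varepsilon/r)$ term, and then fix $\theta$ small. One minor caution: the $L^\infty$--$L^2$ step should be phrased via local regularity for constant-coefficient parabolic \emph{systems} (as in \cite{MR241822}) rather than De Giorgi--Nash--Moser, which is scalar; and the $w_\varepsilon - u_\varepsilon$ comparison through Lemma~\ref{l3.5} yields $\varsigma^\gamma$ rather than $\varsigma$, but this is harmless for the downstream use in Lemma~\ref{l4.5} and Theorem~\ref{t1.4}.
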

\begin{proof}
The elliptic version of divergence form with rough boundaries has been proved by Zhuge \cite{MR4199276}.

 Let $u_0$ be given by Theorem \ref{t4.3}, then
by the $C^{1,1}$ regularity of $u_0$ on the flat boundary, we know that
\begin{equation}\label{4.14}
||(\nabla^2 u_0,\partial_t u_0, r^{-1}\nabla u_0)||_{L^\infty((T^{\varepsilon,+}_r\cap B_{r/4})\times I_{r/4})}\leq Cr^{-1}\left(\fint_{I_{r/2}}\fint_{T^{\varepsilon,+}_{r}\cap B_{r/2}}|\nabla u_0|^2\right),\end{equation}
where we use the notation $||(A,B,C)||_{\mathcal{B}}=:||A||_{\mathcal{B}}+||A||_{\mathcal{B}}+||A||_{\mathcal{B}}$ in the inequality above.

To proceed, let $x_r$ be the point on the flat boundary $\partial T^{\varepsilon,+}_r\cap B_r$ such that it is the closet point to the origin. By assumption \eqref{1.3}, it is easy to see that $|x_r|\leq Cr\zeta(r,\varepsilon/r)$. Now, since $u_0(\cdot,0)$ is identically 0 on the flat boundary $\partial T^{\varepsilon,+}_r\cap B_{r}$, the tangential derivatives vanish at $x_r$, i.e.,
$$(I-n_r\otimes n_r)\nabla u_0(x_r,0)=0,$$
which implies that
\begin{equation*}
\nabla u_0(x_r,0)=(n_r\otimes n_r)\nabla u_0(x_r,0)=n_r(n_r\cdot \nabla u_0(x_r,0)).
\end{equation*}

\noindent
Hence, it follows from the Taylor expansion of $u_0$ at $(x_r,0)$ that
$$\begin{aligned}&|u_0(x,t)-u_0(x_r,0)-(x-x_r)\cdot \nabla u_0(x_r,0)|\\[6pt]
&\quad\leq C|x-x_r|^2||\nabla^2 u_0||_{L^\infty((T^{\varepsilon,+}_r\cap B_{r/4})\times I_{r/4})}\\[6pt]
&\quad\quad\quad+C|t|\cdot||\partial_t u_0||_{L^\infty((T^{\varepsilon,+}_r\cap B_{r/4})\times I_{r/4})}.\\[6pt]
\end{aligned}$$

\noindent
Then for any $(x,t)\in \left(T^{\varepsilon,+}_r\cap B_{r/4}\right)\times I_{r/4}$, there holds
$$\begin{aligned}
&|u_0(x,t)-(x\cdot n_r)(n_r\cdot \nabla u_0(x_r,0))|\\[8pt]
&\quad \leq C|x-x_r|^2||\nabla^2 u_0||_{L^\infty((T^{\varepsilon,+}_r\cap B_{r/4})\times I_{r/4})}\\
&\quad\quad+C|t|||\partial_t u_0||_{L^\infty((T^{\varepsilon,+}_r\cap B_{r/4})\times I_{r/4})}
+C|x_r|\left(\fint_{I_r}\fint_{T^{\varepsilon,+}_{r/2}\cap B_{r/2}}|\nabla u_0|^2\right)^{1/2}.
\end{aligned}$$

\noindent
Thus, for any $\theta\in (0,1/4)$, we have

\begin{equation}\label{4.15}\begin{aligned}
&\inf_{k\in\mathbb{R}^d}\frac 1 {\theta r}\left(\fint_{I_{\theta r}}\fint_{T^{\varepsilon,+}_{r}\cap B_{\theta r}}|u_0-(n_r\cdot x)k|^2\right)^{1/2}\\[8pt]
\leq &C\theta r||(\nabla^2 u_0,\partial_t u_0)||_{L^\infty((T^{\varepsilon,+}_r\cap B_{r/4})\times I_{r/4})}+C_\theta \zeta(r,\varepsilon/r)\left(\fint_{I_r}\fint_{T^{\varepsilon,+}_{r/2}\cap B_{r/2}}|\nabla u_0|^2\right)^{1/2}.
\end{aligned}\end{equation}

\noindent
Moreover, for any $k\in\mathbb{R}^d$, it is easy to see that $u_0-(n_r\cdot(x-x_r))k$ is also a weak solution to the following parabolic problem:
\begin{equation} \label{4.16}
\left\{\begin{aligned}
(\partial_t + \mathcal{L}_0)\left(u_0-(n_r\cdot(x-x_r))k\right)& = 0 \quad\quad\quad \text{ in } T^{\varepsilon,+}_{r}\times I_{r},\\
u_0-(n_r\cdot(x-x_r))k&=0 \quad\quad\quad\text{ on } \left(\partial T^{\varepsilon,+}_{r}\cap B_r\right)\times I_{r}.\\
\end{aligned}\right.
\end{equation}

\noindent
To proceed, applying the $C^{1,1}$ estimate \eqref{4.14} for $u_0-(n_r\cdot(x-x_r))k$ and the parabolic Caccioppli inequality \eqref{3.2} yields that
\begin{equation}\label{4.17}
\begin{aligned}
r ||(\nabla^2 u_0,\partial_t u_0)||_{L^\infty((T^{\varepsilon,+}_r\cap B_{r/4})\times I_{r/4})}\leq C\inf_{k\in\mathbb{R}^d}\frac 1 r \left(\fint_{I_{3 r/4}}\fint_{T^{\varepsilon,+}_{r}\cap B_{3 r/4}}|u_0-(n_r\cdot(x-x_r))k|^2\right)^{1/2}.
\end{aligned}\end{equation}

\noindent Now, let $k_1$ satisfy

\begin{equation}\label{4.18}
\inf_{k\in\mathbb{R}^d}\frac 1 r \left(\fint_{I_{3 r/4}}\fint_{T^{\varepsilon,+}_{r}\cap B_{3r/ 4}}|u_0-(n_r\cdot x)k|^2\right)^{1/2}=\left(\fint_{I_{3 r/4}}\fint_{T^{\varepsilon,+}_{r}\cap B_{3 r/4}}|u_0-(n_r\cdot x)k_1|^2\right)^{1/2}.
\end{equation}
A direct geometrical observation implies that $|n_r\cdot x|\geq Cr$ in a large portion of $T^{\varepsilon,+}_r\cap B_{3r/4}$.  Thus, it directly follows from the triangle inequality that

\begin{equation}\label{4.19}
\frac1r\left(\fint_{I_{3 r/4}}\fint_{T^{\varepsilon,+}_{r}\cap B_{3 r/4}}|u_0-(n_r\cdot x)k_1|^2\right)^{1/2}\geq c|k_1|-\frac1 r\left(\fint_{I_{3 r/4}}\fint_{T^{\varepsilon,+}_{r}\cap B_{3 r/4}}|u_0|^2\right)^{1/2}.\end{equation}

\noindent Moreover, it is easy to see that
\begin{equation}\label{4.20}
\inf_{k\in\mathbb{R}^d}\frac1r\left(\fint_{I_{3 r/4}}\fint_{T^{\varepsilon,+}_{r}\cap B_{3 r/4}}|u_0-(n_r\cdot x)k_1|^2\right)^{1/2}\leq \frac1r\left(\fint_{I_{3 r/4}}\fint_{T^{\varepsilon,+}_{r}\cap B_{3 r/4}}|u_0|^2\right)^{1/2}.
\end{equation}

\noindent Now, combining \eqref{4.19}-\eqref{4.20} yields that
\begin{equation}\label{4.21}
|k_1|\leq \frac Cr\left(\fint_{I_{3 r/4}}\fint_{T^{\varepsilon,+}_{r}\cap B_{3 r/4}}|u_0|^2\right)^{1/2}.
\end{equation}

\noindent Consequently, due to \eqref{4.21}, a direct computation shows that
\begin{equation}\label{4.22}
\begin{aligned}
&\inf_{k\in\mathbb{R}^d}\frac 1 r \left(\fint_{I_{3 r/4}}\fint_{T^{\varepsilon,+}_{r}\cap B_{3 r/4}}|u_0-(n_r\cdot(x-x_r))k|^2\right)^{1/2}\\
&\hspace{6.5cm}-\frac 1 r \left(\fint_{I_{3 r/4}}\fint_{T^{\varepsilon,+}_{r}\cap B_{3 r/4}}|u_0-(n_r\cdot x)k_1|^2\right)^{1/2}\\
&\leq \inf_{k\in\mathbb{R}^d}\frac 1 r \left(\fint_{I_{3 r/4}}\fint_{T^{\varepsilon,+}_{r}\cap B_{3 r/4}}|(n_r\cdot x)(k-k_1)-(n_r\cdot x_r)k|^2\right)^{1/2}\\
&\leq \frac 1 r \left(\fint_{I_{3 r/4}}\fint_{T^{\varepsilon,+}_{r}\cap B_{3 r/4}}|(n_r\cdot x_r)k_1|^2\right)^{1/2}\\
&\leq C\frac{|x_r|}{r^2}\left(\fint_{I_{3 r/4}}\fint_{T^{\varepsilon,+}_{r}\cap B_{3 r/4}}|u_0|^2\right)^{1/2},
\end{aligned}\end{equation}
which implies that
\begin{equation}\label{4.23}
\begin{aligned}
&\inf_{k\in\mathbb{R}^d}\frac 1 r \left(\fint_{I_{3 r/4}}\fint_{T^{\varepsilon,+}_{r}\cap B_{3 r/4}}|u_0-(n_r\cdot(x-x_r))k|^2\right)^{1/2}\\
\leq &\inf_{k\in\mathbb{R}^d}\frac 1 r \left(\fint_{I_{3 r/4}}\fint_{T^{\varepsilon,+}_{r}\cap B_{3 r/4}}|u_0-(n_r\cdot x)k|^2\right)^{1/2}+C\frac{|x_r|}{r^2}\left(\fint_{I_{3 r/4}}\fint_{T^{\varepsilon,+}_{r}\cap B_{3 r/4}}|u_0|^2\right)^{1/2}.
\end{aligned}\end{equation}
To proceed, it follows from \eqref{4.17} and \eqref{4.23} that
\begin{equation}\label{4.24}
\begin{aligned}
r ||(\nabla^2 u_0,\partial_t u_0)||_{L^\infty((T^{\varepsilon,+}_r\cap B_{r/4})\times I_{r/4})}\leq & C\inf_{k\in\mathbb{R}^d}\frac 1 r \left(\fint_{I_{3 r/4}}\fint_{T^{\varepsilon,+}_{r}\cap B_{3 r/4}}|u_0-(n_r\cdot x)k|^2\right)^{1/2}\\
&+C\frac{|x_r|}{r^2}\left(\fint_{I_{3 r/4}}\fint_{T^{\varepsilon,+}_{r}\cap B_{3 r/4}}|u_0|^2\right)^{1/2}.
\end{aligned}\end{equation}

\noindent
Now, it follows from \eqref{4.15}, \eqref{4.24} and Poincar\'e inequality that
$$\begin{aligned}
&\inf_{k\in\mathbb{R}^d}\frac 1 {\theta r}\left(\fint_{I_{\theta r}}\fint_{T^{\varepsilon,+}_{r}\cap B_{\theta r}}|u_0-(n_r\cdot x)k|^2\right)^{1/2}\\[5pt]
\leq& C\theta \inf_{k\in\mathbb{R}^d}\frac 1 r \left(\fint_{I_{3 r/4}}\fint_{T^{\varepsilon,+}_{r}\cap B_{3 r/4}}|u_0-(n_r\cdot x)k|^2\right)^{1/2}+C_\theta \zeta(r,\varepsilon/r)\left(\fint_{I_{3r/4}}\fint_{T^{\varepsilon,+}_{r}\cap B_{3r/4}}|\nabla u_0|^2\right).
\end{aligned}$$

\noindent According to \eqref{4.9} and basic energy estimates \eqref{*} for $u_0$, we have
$$\begin{aligned}
\inf_{k\in\mathbb{R}^d}\frac 1 {\theta r}\left(\fint_{I_{\theta r}}\fint_{T^{\varepsilon,+}_{r}\cap B_{\theta r}}|u_\varepsilon-(n_r\cdot x)k|^2\right)&^{1/2}
\leq C\theta \inf_{k\in\mathbb{R}^d}\frac 1 { r}\left(\fint_{I_{ r}}\fint_{T^{\varepsilon,+}_{r}}|u_\varepsilon-(n_r\cdot x)k|^2\right)^{1/2}\\
&+C\left(\left({\varepsilon}/{r}\right)^\sigma+\zeta(r,\varepsilon/r)\right)
\left(\fint_{I_{5 r}}\fint_{\Omega_{5r}^\varepsilon}|\nabla u_\varepsilon|^2\right)^{1/2}.
\end{aligned}$$

 Consequently, due to the Caccioppli inequality \eqref{3.2} after choosing $\theta$ so small such that $C\theta=\frac 12$, we obtain the desired estimate.
\end{proof}

Now, similar as in \cite[Lemma 3.4]{MR4199276}, we have the following approximation result.
\begin{lemma}\label{l4.5}
 There are $\theta\in (0,1)$ and $\varepsilon_0\in (0,1)$ such that if $r\in (\varepsilon/\varepsilon_0,\varepsilon_0)$
	\begin{equation*}
	H(\theta r) \le \frac{1}{2}H(r) + C\left((\varepsilon/r)^\sigma+\varsigma (r,\varepsilon/r)^{1/2}\right) \Phi(40r),
	\end{equation*}
where $\sigma>0$ is given in Theorem \ref{t4.3}.
\end{lemma}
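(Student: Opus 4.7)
The plan is to use $w_\varepsilon=w_\varepsilon^r$, the solution to \eqref{3.19} on the flat-boundary cylinder $T^{\varepsilon,+}_r\times I_r$ with boundary data $u_\varepsilon$, as a bridge between the rough-boundary excess $H(\theta r;u_\varepsilon)$ and the flat-boundary excess $\tilde H(r,\theta,w_\varepsilon)$, and to extract a contraction on $H$ from Lemma \ref{l4.4}. The scheme is: (a) show $H(\theta r;u_\varepsilon)\le C_1 \tilde H(r,\theta,w_\varepsilon)+\mathrm{err}_a$; (b) apply Lemma \ref{l4.4} to get $\tilde H(r,\theta,w_\varepsilon)\le \tfrac12\tilde H(r,1,w_\varepsilon)+\mathrm{err}_b$; (c) show $\tilde H(r,1,w_\varepsilon)\le H(r;u_\varepsilon)+\mathrm{err}_c$; and (d) combine and choose $\theta$ small enough that the final coefficient in front of $H(r)$ is $\tfrac12$ (re-picking $\theta$ in Lemma \ref{l4.4} to absorb $C_1$ is permitted because the $\tfrac12$-decay there already required $\theta$ small).

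For step (a), let $k^\ast$ be the minimizer of $\tilde H(r,\theta,w_\varepsilon)$; test $(n_{\theta r}\cdot x)k^\ast$ in the infimum defining $H(\theta r;u_\varepsilon)$ and use the triangle inequality, splitting the error into three pieces: (i) $(\fint_{Q^\varepsilon_{\theta r}}|u_\varepsilon-w_\varepsilon|^2)^{1/2}$, bounded via Poincar\'e (since $u_\varepsilon-w_\varepsilon$ is supported in $T^{\varepsilon,+}_r$ and vanishes on $\partial T^{\varepsilon,+}_r$) combined with Lemma \ref{l3.5} and the Caccioppoli inequality \eqref{3.2}; (ii) $(\fint_{Q^\varepsilon_{\theta r}}|w_\varepsilon-(n_r\cdot x)k^\ast|^2)^{1/2}\le C\,\theta r\,\tilde H(r,\theta,w_\varepsilon)$, since $|Q^\varepsilon_{\theta r}|$ and $|I_{\theta r}\times(T^{\varepsilon,+}_r\cap B_{\theta r})|$ are comparable (both $\sim(\theta r)^{d+2}$, using $\varsigma(r,\varepsilon/r)\le 1/4$); (iii) the normal-vector rotation $|n_{\theta r}-n_r|\,r\,|k^\ast|$, controlled by Lemma \ref{l4.1} together with a flatness-based bound $|k^\ast|\lesssim\Phi(r)$ obtained as in \eqref{4.8}.

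For step (c), I mirror the argument using the minimizer $k_r$ of $H(r;u_\varepsilon)$ and split $T^{\varepsilon,+}_r=\Omega^\varepsilon_r\cup(T^{\varepsilon,+}_r\setminus\Omega^\varepsilon_r)$. The contribution over $\Omega^\varepsilon_r$ reduces to $H(r;u_\varepsilon)$ after swapping $w_\varepsilon\leftrightarrow u_\varepsilon$ (paying a Poincar\'e--Lemma \ref{l3.5} error). The key new piece is the strip $T^{\varepsilon,+}_r\setminus\Omega^\varepsilon_r$, whose relative volume is $\sim\varsigma(r,\varepsilon/r)$; there the zero extension \eqref{1.5} gives $u_\varepsilon\equiv 0$, so I write $w_\varepsilon-(n_r\cdot x)k_r=(w_\varepsilon-u_\varepsilon)-(n_r\cdot x)k_r$. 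The first summand is handled by Poincar\'e on $T^{\varepsilon,+}_r$ (using that $w_\varepsilon-u_\varepsilon$ vanishes on $\partial T^{\varepsilon,+}_r$) together with Lemma \ref{l3.5}; the second by $r|k_r|\lesssim r\Phi(r)$ from Lemma \ref{l4.2}. Taking the square root of the measure ratio $\varsigma$ produces the $\varsigma(r,\varepsilon/r)^{1/2}$ term in the final error.

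The main obstacle is step (c)'s strip estimate: the exponent $\tfrac12$ in $\varsigma(r,\varepsilon/r)^{1/2}$ is exactly $\sqrt{|T^{\varepsilon,+}_r\setminus\Omega^\varepsilon_r|/|T^{\varepsilon,+}_r|}$, and this is the worst error contribution, alongside $(\varepsilon/r)^\sigma$ inherited from Theorem \ref{t4.3} through Lemma \ref{l4.4}. All other errors---of orders $\varsigma^\gamma$ from Lemma \ref{l3.5} and $\varsigma$ from Lemmas \ref{l4.1} and \ref{l4.4}---are absorbed into $C((\varepsilon/r)^\sigma+\varsigma(r,\varepsilon/r)^{1/2})\Phi(40r)$. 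Fixing $\theta$ small (compatible with the proof of Lemma \ref{l4.4}, whose $\theta$ can be taken arbitrarily small) and choosing $\varepsilon_0$ small enough that $r\in(\varepsilon/\varepsilon_0,\varepsilon_0)$ stays in the admissible range of all invoked results then yields the desired contraction $H(\theta r)\le \tfrac12 H(r)+C((\varepsilon/r)^\sigma+\varsigma(r,\varepsilon/r)^{1/2})\Phi(40r)$.
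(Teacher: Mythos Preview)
Your proposal is correct and follows essentially the same route as the paper: both bridge $H$ and $\tilde H$ through $w_\varepsilon$, invoke Lemma~\ref{l4.4} for the contraction, and control the discrepancies via Lemma~\ref{l3.5} (the $u_\varepsilon-w_\varepsilon$ error), Lemma~\ref{l4.1} (normal rotation), and the volume mismatch between $\Omega^\varepsilon_{\theta r}$ and $T^{\varepsilon,+}_r\cap B_{\theta r}$. The only organizational difference is that the paper keeps the comparison \emph{additive}---writing $H(\theta r;w_\varepsilon)=\tilde H(r,\theta;w_\varepsilon)+I_1$ and bounding $I_1$ using the precise ratio $|T^{\varepsilon,+}_r\cap B_{\theta r}|/|\Omega^\varepsilon_{\theta r}|\le 1+C\varsigma^{1/2}$---so no constant $C_1>1$ ever multiplies $\tilde H$ and Lemma~\ref{l4.4} can be used as stated; your multiplicative version with re-picking $\theta$ is also valid, since the proof of Lemma~\ref{l4.4} in fact yields $\tilde H(r,\theta)\le C\theta\,\tilde H(r,1)+C_\theta(\cdots)$ for all small $\theta$.

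One small bookkeeping flag: the Lemma~\ref{l3.5} error is $\varsigma^\gamma$ with $\gamma=\tfrac12-\tfrac1{p_1}<\tfrac12$, which is \emph{larger} than $\varsigma^{1/2}$ and hence not literally ``absorbed'' into it. The paper's display~\eqref{4.29} has the same looseness, and it does not affect the iteration in Lemma~\ref{l4.6} since $\varsigma^\gamma$ is still an admissible modulus under the standing assumption that $\varsigma$ is $\sigma$-admissible with $\sigma\le\gamma$.
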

\begin{proof} The proof is almost identical to \cite[Lemma 3.4]{MR4199276} after obtaining Theorem \ref{t4.3} and Lemma \ref{l4.4}, and we provide it in the Appendix for completeness.
\end{proof}

To complete the proof of Theorem \ref{t1.4}, we need the following
general iteration lemma, whose proof can be found in \cite[Lemma 3.5]{MR4199276}.
\begin{lemma}\label{l4.6}
	Suppose $\eta: (0,1]\times (0,1]\mapsto [0,1]$ is an admissible modulus.
	Let $H,\Phi,h:(0,2]\mapsto [0,\infty)$ be nonnegative functions. Suppose that there exist $\theta\in (0,1/4), \varepsilon_0\in (0,\theta)$ and $C_0>0$ so that $H,\Phi$ and $h$ satisfy:
	\begin{itemize}
		\item For every $r\in (\varepsilon/\varepsilon_0,\varepsilon_0)$,
		\begin{equation}\label{4.30}
		H(\theta r) \le \frac{1}{2} H(r) + C_0 \big\{ \eta(r,\varepsilon/r)\big\} \Phi(40r).
		\end{equation}
		
		\item For every $r\in (\varepsilon,1)$,
\begin{equation}\label{4.31}
			\begin{aligned}
			H(r) & \le C_0 \Phi(r), \\[5pt]	
			h(r) & \le C_0 \big( H(r) + \Phi(r) \big), \\[5pt]
			\Phi(r) &\le C_0\big( H(r) + h(r)\big), \\[5pt]
			\sup_{r\le t\le 2r} \Phi(t) & \le C_0\Phi(2r), \\[5pt]
			\sup_{r\le s,t\le 2r} |h(s) - h(t)| &\le C_0 H(2r). \\[5pt]
			\end{aligned}
		\end{equation}
	\end{itemize}
	Then
	\begin{equation}\label{4.32}
	\int_{\varepsilon}^{1} \frac{H(r)}{r} dr + \sup_{\varepsilon\le r\le 1} \Phi(r) \le C\Phi(2),
	\end{equation}
	where $C$ depends on the parameters except $\varepsilon$.
\end{lemma}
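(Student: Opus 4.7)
The plan is to convert the scale-$\theta$ recursion \eqref{4.30} into a summable iteration along a geometric sequence of scales, and then close a bootstrap that ties $\Phi$ to $H$ through the auxiliary function $h$. I would first fix $r_k = \theta^k r_0$ with $r_0$ slightly below $\varepsilon_0$, so the values $\{r_k\}$ sweep through $(\varepsilon/\varepsilon_0,\varepsilon_0)$, the range in which \eqref{4.30} is valid. By the flatness and Dini-type conditions in the definition of an admissible modulus, I can shrink $\varepsilon_0$ (independent of $\varepsilon$) so that both $\sup_{r\in(0,\varepsilon_0)}\eta(r,\varepsilon/r)$ and the discrete sum $\sum_k \eta(r_k,\varepsilon/r_k)$ fall below any prescribed $\delta>0$; the latter is compared with $\int_{\varepsilon/\varepsilon_0}^{\varepsilon_0}\eta(r,\varepsilon/r)/r\,dr$ across dyadic annuli, using $\eta\le 1$ to control local oscillation.

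Next, I would turn the oscillation estimate on $h$ into a telescoping bound. Iterating $\sup_{r\le s,t\le 2r}|h(s)-h(t)|\le C_0 H(2r)$ on a dyadic sequence descending from $r_0$ to $r_k$ gives $|h(r_k)-h(r_0)|\le C_0\Sigma$, where $\Sigma := \sum_j H(s_j)$ runs over the dyadic scales $s_j$ between $\varepsilon$ and $r_0$. Combining with $h(r_0)\le C_0(H(r_0)+\Phi(r_0))$ and $\Phi(r_k)\le C_0(H(r_k)+h(r_k))$ yields
\begin{equation*}
\Phi(r_k)\le C\bigl(H(r_k)+\Phi(r_0)+\Sigma\bigr),
\end{equation*}
and the doubling bound $\sup_{r\le t\le 2r}\Phi(t)\le C_0\Phi(2r)$ upgrades this to the same estimate at scale $40 r_k$ up to a fixed constant.

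The main iteration is then carried out by summing \eqref{4.30} to obtain $\sum_k H(r_k)\le 2H(r_0)+2C_0\sum_k \eta(r_k,\varepsilon/r_k)\,\Phi(40r_k)$, substituting the bound from the previous paragraph, and recognizing that the telescoping sum of $H(2^{-j}r_0)$ is itself controlled by $\Sigma$. Because $\sum_k \eta(r_k,\varepsilon/r_k) \le \delta$, the term $C\delta\,\Sigma$ on the right can be absorbed into the left, leading to
\begin{equation*}
\Sigma + \sup_k \Phi(r_k) \le C\bigl(H(r_0)+\Phi(r_0)\bigr) \le C\Phi(2),
\end{equation*}
where the last inequality follows by iterating the $\Phi$-doubling finitely many times across the fixed-length upper tail $(\varepsilon_0,2)$. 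The lower tail $(\varepsilon,\varepsilon/\varepsilon_0)$ is handled symmetrically, starting from the now-bounded $\Phi(\varepsilon/\varepsilon_0)$. Converting the discrete sum $\Sigma$ into the integral $\int_\varepsilon^1 H(r)/r\,dr$ uses $H\le C_0\Phi$ and the $\Phi$-doubling to bound $H$ on each dyadic interval by its sampled value, up to an overall $|\log\theta|$ factor.

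The principal obstacle is the absorption step: one must verify that the constant multiplying $\Sigma$ arising from the telescoping of $h$ combines favorably with the smallness of $\delta$, without the factor $1/\theta$ appearing to any bad power. The Dini-type hypothesis is exactly what allows $\delta$ to be made as small as needed once $\theta$ and all other constants in \eqref{4.31} are fixed. A secondary difficulty is ensuring that the discrete-to-continuous comparison used for the integral tail does not depend on any extra regularity of $\eta$; but since $\eta$ is continuous and bounded by $1$, a standard mean-value estimate on each dyadic annulus suffices.
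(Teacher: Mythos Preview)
Your outline is essentially correct and follows the standard approach; the paper itself does not prove this lemma but simply cites \cite[Lemma~3.5]{MR4199276}, whose argument proceeds exactly along the lines you describe: iterate \eqref{4.30} along a geometric sequence, control $\Phi$ through the telescoped oscillation of $h$, and absorb the resulting error terms using the smallness furnished by the Dini condition.

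One point deserving a little more care is the mismatch between the $\theta$-scale in \eqref{4.30} and the dyadic scale in the oscillation bound $\sup_{r\le s,t\le 2r}|h(s)-h(t)|\le C_0 H(2r)$. When you write ``recognizing that the telescoping sum of $H(2^{-j}r_0)$ is itself controlled by $\Sigma$,'' note that the left side of your summed recursion is $\sum_k H(\theta^k r_0)$, which samples $H$ more sparsely than the dyadic sum $\Sigma$ appearing on the right. The clean fix is to run the $\theta$-iteration \eqref{4.30} not only from $r_0$ but from each of the $L=\lceil\log_2(1/\theta)\rceil$ starting points $2^{-i}r_0$, $i=0,\dots,L-1$; summing these $L$ chains recovers the full dyadic sum on the left, at the cost of an extra factor $L$ in the initial data and in the $\eta$-sum, which is harmless since $L$ depends only on $\theta$. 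Alternatively, one may integrate \eqref{4.30} against $dr/r$ directly and work with $\int H(r)/r\,dr$ throughout, which sidesteps the discrete comparison entirely. Either route closes the absorption as you describe, and your identification of the Dini hypothesis as the mechanism allowing $\delta$ to be chosen after $\theta$ and $C_0$ is exactly right.
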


\noindent Now we are ready to give the proof of Theorem \ref{t1.4}.\\
\noindent \textbf{Proof of Theorem \ref{t1.4}} . Let $\Phi$ and $H$ be defined in \eqref{4.2} and \eqref{4.3}, respectively, and let $h$ be given in Lemma \ref{l4.2}. Now, define

$$H^*(r)=H(r)+\varsigma(r,\varepsilon/r)\Phi(r).$$

\noindent
Then, according to \eqref{4.4}-\eqref{4.6} and Lemma \ref{l4.5}, we know that $\Phi$, $H^*$ and $h$ satisfy the assumptions of Lemma \ref{l4.6} (with $H$ replaced by $H^*$) with $\eta(r,s)=s^\sigma+\varsigma(r,s)^{1/2}+\varsigma(\theta r,s/\theta)$ for $r\in(\varepsilon,1)$. To proceed, since $\varsigma$ is a $\sigma$-admissible modulus, then $\eta(r,s)$ is an admissible modulus. Now, it follows from Lemma \ref{4.6} that
$$\sup_{\varepsilon\leq r\leq 1}\Phi(r)\leq C\Phi(2).$$
Finally, the Poincar\'e inequality and the Caccioppli inequality \eqref{3.2} yield the desired estimate \eqref{1.11}.

\appendix
\section{Appendix of proof of Lemma \ref{l4.5}}
In this appendix, we provide the details of the proof of Lemma \ref{l4.5}.\\

 It directly follows from the triangle inequality that
	\begin{equation*}
	|H(r;f) - H(r;g)| \le H(r;f-g),
	\end{equation*}
	for any $f,g\in L^2(Q^\varepsilon_r;\mathbb{R}^d)$. Now, for $u_\varepsilon$ and $w_\varepsilon$, it follows from Lemma \ref{l4.4} that
	\begin{align}\label{4.25}
\notag	H(\theta r; u_\varepsilon) & \le H(\theta r; w_\varepsilon) + H(\theta r; u_\varepsilon - w_\varepsilon) \\[5pt]\notag
	& \le \widetilde{H}(r,\theta;w_\varepsilon) + ( H(\theta r;w_\varepsilon) - \widetilde{H}(r,\theta;w_\varepsilon) ) + H(\theta r; u_\varepsilon - w_\varepsilon)\\[5pt]\notag
	& \le \frac{1}{2} \widetilde{H}(r,1;w_\varepsilon) + ( H(\theta r;w_\varepsilon) - \widetilde{H}(r,\theta;w_\varepsilon) ) + H(\theta r; u_\varepsilon - w_\varepsilon) \\[5pt]
	& \qquad + C\left((\varepsilon/r)^\sigma+\varsigma (r,\varepsilon/r)\right) \Phi(10r) \\[5pt]\notag
	& \le \frac{1}{2} H(r; u_\varepsilon) + ( H(\theta r;w_\varepsilon) - \widetilde{H}(r,\theta;w_\varepsilon) ) + \frac{1}{2}(\widetilde{H}(r,1;w_\varepsilon) - H(r;w_\varepsilon))  \\\notag
	&\qquad + H(\theta r; u_\varepsilon - w_\varepsilon) +\frac12 H(r;u_\varepsilon - w_\varepsilon)
	+ C\left((\varepsilon/r)^\sigma+\varsigma (r,\varepsilon/r)\right) \Phi(10r).
	\end{align}

\noindent	
Recall that in the proof of \eqref{4.18}-\eqref{4.21}, for $0<\theta\leq 1$, the best constant $k_{\theta r}$ in $H(\theta r;w_\varepsilon)$ satisfies
 \begin{equation}\label{4.26}
 |k_{\theta r}|\leq \frac C {\theta r}\bigg( \fint_{Q^\varepsilon_{\theta r}} |w_\varepsilon|^2 \bigg)^{1/2},\end{equation} then there holds
	\begin{equation*}
	\begin{aligned}
	I_1 &:= H(\theta r;w_\varepsilon) - \widetilde{H}(r,\theta;w_\varepsilon) \\
	& = \frac{1}{\theta r} \inf_{k\in \mathbb{R}^d} \bigg( \fint_{Q^\varepsilon_{\theta r} } |w_\varepsilon - (n_{\theta r}\cdot x)k|^2 \bigg)^{1/2} - \frac{1}{\theta r}\inf_{k\in \mathbb{R}^d}  \bigg(\fint_{I_{\theta r}} \fint_{T_r^{\varepsilon,+} \cap B_{\theta r}} |w_\varepsilon - (n_r\cdot x) k|^2\bigg)^{1/2} \\
	& \le \frac{1}{\theta r} \inf_{k\in \mathbb{R}^d} \bigg( \fint_{Q^\varepsilon_{\theta r} } |w_\varepsilon - (n_{r}\cdot x)k|^2 \bigg)^{1/2} - \frac{1}{\theta r}\inf_{k\in \mathbb{R}^d}  \bigg( \fint_{I_{\theta r}}\fint_{T_r^{\varepsilon,+} \cap B_{\theta r}} |w_\varepsilon - (n_r\cdot x) k|^2\bigg)^{1/2} \\
	& \qquad +  \frac{C|n_r - n_{\theta r}|}{\theta r} \bigg( \fint_{Q^\varepsilon_{\theta r}} |w_\varepsilon|^2 \bigg)^{1/2}.
	\end{aligned}
	\end{equation*}
According to Lemma \ref{l3.1}, the Poincar\'{e} inequality, the Caccioppoli inequalities \eqref{3.21} and the energy estimates of $w_\varepsilon$, the last term in the above inequality is bounded by $C\zeta(r,\varepsilon/r) \Phi(20r) $. To proceed, a direct computation yields that
	\begin{equation*}
	\inf_{k\in \mathbb{R}^d} \bigg( \fint_{Q^\varepsilon_{\theta r} } |w_\varepsilon - (n_{r}\cdot x)k|^2 \bigg)^{1/2} \le \frac{|T_r^{\varepsilon,+}\cap B_{\theta r}|^{1/2}}{|\Omega^\varepsilon_{\theta r}|^{1/2}} \inf_{k\in \mathbb{R}^d} \bigg(\fint_{I_{\theta r}} \fint_{T_{r}^{\varepsilon,+} \cap B_{\theta r} } |w_\varepsilon - (n_{r}\cdot x)k|^2 \bigg)^{1/2}.
	\end{equation*}
	According the definition of $T_r^{\varepsilon,+}$ in \eqref{1.3} and the assumption that $r\varsigma(r,\varepsilon/r)$ is non-decreasing, there holds
	\begin{equation*}
	\frac{|T_r^{\varepsilon,+}\cap B_{\theta r}|^{1/2}}{|\Omega^\varepsilon_{\theta r}|^{1/2}} = \bigg(1 + \frac{|T_r^{\varepsilon,+}\cap B_{\theta r}\setminus \Omega^\varepsilon_{\theta r}|}{|\Omega^\varepsilon_{\theta r}|} \bigg)^{1/2} \le 1 + C\zeta(r,\varepsilon/r)^{1/2}.
	\end{equation*}
Consequently, combining the above three inequalities yields that
	\begin{equation}\label{4.27}
	I_1 \le C\zeta(r,\varepsilon/r)^{1/2} \Phi(20r).\\[8pt]
	\end{equation}

\noindent	
To proceed, we are ready to estimate the following term
	\begin{equation*}
	\begin{aligned}
	I_2 &:= \widetilde{H}(r,1;w_\varepsilon) - H(r;w_\varepsilon) \\
	& = \frac{1}{r}\inf_{k\in \mathbb{R}^d}  \bigg( \fint_{T_r^{\varepsilon,+}} |w_\varepsilon - (n_r\cdot x) k|^2\bigg)^{1/2} - \frac{1}{r} \inf_{k\in \mathbb{R}^d} \bigg( \fint_{Q^\varepsilon_{r} } |w_\varepsilon - (n_{ r}\cdot x)k|^2 \bigg)^{1/2}.
	\end{aligned}
	\end{equation*}
Similar as in \eqref{4.18}, we let $k_r$ be the vector satisfying
	\begin{equation*}
	H(r;w_\varepsilon) = \frac{1}{r} \bigg( \fint_{Q^\varepsilon_{r} } |w_\varepsilon - (n_{ r}\cdot x)k_r|^2 \bigg)^{1/2}.
	\end{equation*}

\noindent
In view of \eqref{4.26}, we have $|k_r| \le C\Phi(r;w_\varepsilon)$.
To proceed, by using $T_r^{\varepsilon,+}=\Omega^\varepsilon_r\cup \left(T_r^{\varepsilon,+}\setminus \Omega^\varepsilon_r\right)$, a direct computation shows that
	\begin{equation*}
	\begin{aligned}
	&\frac{1}{r}\inf_{k\in \mathbb{R}^d}  \bigg( \fint_{I_r}\fint_{T_r^{\varepsilon,+}} |w_\varepsilon - (n_r\cdot x) k|^2\bigg)^{1/2}
\leq \frac{1}{r} \bigg( \fint_{I_r}\fint_{T_r^{\varepsilon,+}} |w_\varepsilon - (n_r\cdot x) k_r|^2\bigg)^{1/2} \\[5pt]
	\leq& \frac{1}{r} \bigg( \fint_{Q^\varepsilon_r} |w_\varepsilon - (n_r\cdot x) k_r|^2\bigg)^{1/2} \frac{|\Omega^\varepsilon_r|^{1/2}}{ |T_r^{\varepsilon,+}|^{1/2} }+
	 \frac{1}{r} \bigg( \frac{1}{|T_r^{\varepsilon,+}|} \fint_{I_r}\int_{T_r^{\varepsilon,+}\setminus \Omega^\varepsilon_r} |w_\varepsilon - (n_r\cdot x) k_r|^2  \bigg)^{1/2}.
	\end{aligned}
	\end{equation*}
Recall that $\Omega_r^\varepsilon \subset T_r^{\varepsilon,+}$ and $T_r^{\varepsilon,+}\setminus \Omega^\varepsilon_r \subset T_r^{\varepsilon,+}\setminus T_r^{\varepsilon,-}$ and the boundary condition satisfied by $w_\varepsilon$, then it follows the Poincar\'{e} inequality that
	\begin{equation*}
	\bigg( \fint_{I_r}\int_{T_r^{\varepsilon,+}\setminus \Omega^\varepsilon_r} |w_\varepsilon|^2  \bigg)^{1/2} \le Cr\zeta(r,\varepsilon/r) \bigg( \fint_{I_r}\int_{T_r^{\varepsilon,+}\setminus T_r^{\varepsilon,-}} |\nabla w_\varepsilon|^2  \bigg)^{1/2}.
	\end{equation*}
	Combining the above two inequalities, the estimate \eqref{4.26} of $|k_r|$ and the energy estimates \eqref{3.22} of $w_\varepsilon$, we obtain
	\begin{equation*}
	\frac{1}{r}\inf_{k\in \mathbb{R}^d}  \bigg( \fint_{I_r}\fint_{T_r^{\varepsilon,+}} |w_\varepsilon - (n_r\cdot x) k|^2\bigg)^{1/2} \le \frac{1}{r} \bigg( \fint_{Q^\varepsilon_r} |w_\varepsilon - (n_r\cdot x) k_r|^2\bigg)^{1/2} + C\zeta(r,\varepsilon/r) \Phi(20r),
	\end{equation*}
	which directly implies that
	\begin{equation}\label{4.28}
	I_2 \leq  C\zeta(r,\varepsilon/r) \Phi(20r).\\[8pt]
	\end{equation}
	
\noindent	Finally, we need only to estimate $H(r;u_\varepsilon -w_\varepsilon)$, since the estimate of $H(\theta r; u_\varepsilon -w_\varepsilon)$ is similar as the estimate $H(r;u_\varepsilon -w_\varepsilon)$. Actually, for any $r\in (\varepsilon/\varepsilon_0,\varepsilon_0)$, it follows from Lemma \ref{l3.5} and the Poincar\'{e} inequality that
	\begin{equation}\label{4.29}
	\begin{aligned}
	H(r;u_\varepsilon -w_\varepsilon) & = \frac{1}{r} \inf_{q\in \mathbb{R}^d} \bigg( \fint_{Q^\varepsilon_{r} } |u_\varepsilon - w_\varepsilon - (n_{ r}\cdot x)q|^2 \bigg)^{1/2} \\
	& \le \frac{1}{r} \bigg( \fint_{Q^\varepsilon_{r} } |u_\varepsilon - w_\varepsilon|^2 \bigg)^{1/2} \\
	& \le C\left((\varepsilon/r)^\gamma+\varsigma (r,\varepsilon/r)\right) \bigg( \fint_{Q^\varepsilon_{20r} } |\nabla u_\varepsilon|^2 \bigg)^{1/2} \\
	& \le C\left((\varepsilon/r)^\gamma+\varsigma (r,\varepsilon/r)\right) \Phi(40r),
	\end{aligned}
	\end{equation}
with $\gamma>0$ given in Lemma \ref{l3.5}. Consequently, combining \eqref{4.27}-\eqref{4.29} yields the desired estimate in Lemma \ref{l4.5}.

\begin{center}\textbf{\large{Acknowlwdgement}}\end{center}
The work of P. Yu was supported by the Outstanding
Innovative Talents Cultivation Funded Programs 2023 of Renmin University of China and the work of Y. Zhang was supported by the
Hubei Provincial Natural Science Foundation of China under Grant 2024AFB357 and by the National Natural Science Foundation of China under Grant 12401256.

\normalem\bibliographystyle{plain}{}

\end{document}